\providecommand{\U}[1]{\protect\rule{.1in}{.1in}}
\newtheorem{thm}{Theorem}[section]
\newtheorem{lem}[thm]{Lemma}
\newtheorem{prop}[thm]{Proposition}
\theoremstyle{definition}
\newtheorem{defn}[thm]{Definition}
\theoremstyle{remark}
\newtheorem{rem}[thm]{Remark}
\numberwithin{equation}{section}
\newtheorem{ex}[thm]{Example}
\begin{document}
\title[$\chi^{2}$ estimation with application to test of hypotheses]{An estimation method for the chi-square divergence with application to test of hypotheses}
\author{M. Broniatowski$^{1}$}
\author{S. Leorato$^{2}$}
\address{$^{1}$Université de REIMS and LSTA, Université PARIS 6, 175 Rue du Chevaleret
75013 PARIS, FRANCE \\
$^{2}$Dip. di Statistica, Probabilitá e Statistiche Applicate, University of
Rome "La Sapienza", P.le A. Moro, 5 00185 Roma}
\email{$^{2}$samantha.leorato@uniroma1.it}
\keywords{chi-square divergence, hypothesis testing, linear constraints, marginal
distributions, contamination models, Fenchel-Legendre transform, inliers}
\subjclass{62F03, 62F10,62F30}
\maketitle

\begin{abstract}
We propose a new definition of the chi-square divergence between
distributions. Based on convexity properties and duality, this version of the
$\chi^{2}$ is well suited both for the classical applications of the $\chi
^{2}$ for the analysis of contingency tables and for the statistical tests for
parametric models, for which it has been advocated to be robust against
\emph{inliers}.

We present two applications in testing. In the first one we deal with tests
for finite and infinite numbers of linear constraints, while, in the second
one, we apply $\chi^{2}-$methodology for parametric testing against contamination.

\end{abstract}

%\subjclass{}%

%\date{}%
%\dedicatory{}%
%\commby{}%
%----------------------------------------------------------------

%----------------------------------------------------------------

\section{Introduction}

\label{sec:1.intro} The $\chi^{2}$ distance is commonly used for categorized
data. For the continuous case, optimal grouping pertaining to the $\chi^{2}$
criterion have been proposed by various authors; see f.i. {~\cite{Bosq80}},
\cite{Lancaster}, \cite{greenwoodNikulin}. These methods are mainly applied
for tests, since they may lead to some bias effect for estimation.

This paper introduces a new approach to the $\chi^{2}$, inserting its study
inside the range of divergence-based methods and presenting a technique
avoiding grouping for estimation and test. Let us first introduce some notation.

Let $M_{1}$ denote the set of all probability measures on $\mathbb{R}^{d}$ and
$M $ the set of all signed measures on $\mathbb{R}^{d}$ with total mass 1. For
$P \in M_{1}$ and $Q \in M$, introduce the $\chi^{2}$ distance between $P$ and
$Q$ by%

\begin{equation}
\chi^{2}(Q,P)=\left\{
\begin{array}
[c]{ll}%
\int_{{}}\left(  \frac{dQ-dP}{dP}\right)  ^{2}\,dP &
Q\mbox{ is a.c. w.r.t. }P\\
\infty & \mbox{otherwise.}
\end{array}
\right. \label{eqn:1.def_chi2}%
\end{equation}

For $\Omega$ a subset of $M$ denote
\begin{equation}
\chi^{2}(\Omega,P)=\inf_{Q\in\Omega}\chi^{2}(Q,P),\label{eqn:1.def_chi2_Omega}%
\end{equation}
with $\inf_{\{\emptyset\}}=\infty$.

When the infimum in (\ref{eqn:1.def_chi2_Omega}) is reached at some measure
$Q^{*}$ which belongs to $\Omega$, then $Q^{*}$ is the \emph{projection} of
$P$ to $\Omega$. Also the role of the class of measures $M$ will appear later,
in connection with the possibility to obtain easily $Q^{*}$ through usual
optimization methods, which might be quite difficult when we consider subsets
$\Omega$ in $M_{1}$.

For a problem of test such as $H_{0}:P\in\Omega$ vs $H_{1}:P\not \in \Omega$,
the test statistic will be an estimate of $\chi^{2}(\Omega,P)$, which equals
$0$ under $H_{0}$, since in that case $P=Q^{\ast}$. Therefore, under $H_{0}$,
there is no restriction when considering $\Omega$ a subset of $M$.

The $\chi^{2}$ distance belongs to the so-called $\phi-$divergences, defined
through
\begin{equation}
\phi(Q,P)=\left\{
\begin{array}
[c]{ll}%
\int{\varphi\left(  \frac{dQ}{dP}\right)  }dP & \mbox{when }Q\ll P\\
\infty & \mbox{otherwise}
\end{array}
\right. \label{eqn:1.def_phi_div}%
\end{equation}
where $\varphi$ is a convex function defined on $\mathbb{R}^{+}$ satisfying
$\varphi(1)=0$. This class of discrepancy measures between probability
measures has been introduced by I. Csisz\'ar \cite{Csiszar1967}, and the
monograph by F. Liese and I. Vajda \cite{LV77} provides their main properties.

The extension of $\phi-$divergences when $Q$ is assumed to be in $M$ is
presented in \cite{Broniatowski-Keziou2003}, in the context of parametric
estimation and tests.

The class of minimum $\phi-$divergence test statistics include, within the
others, the \emph{loglikelihood ratio test}.

For this class it is a matter of fact that first order efficiency is not a
useful criterion of discrimination. A notion of robustness against model
contamination is found in Lindsay \cite{Lindsay1994} (for estimators) and in
Jimenez and Shao \cite{JimenezShao2001} (for test procedures), which gives an
instrument to compare the tests associated to different divergences. Although
their argument deals with finite support models, it may help as a benchmark
for more general situations.

By these papers it emerges that the \emph{minimum Hellinger distance} test
provides a reasonable compromise between robustness against model
contaminations induced by outliers and by inliers.

However, when the model might be subject to inlier contaminations only (namely
missing data problems), as will be advocated in the present paper for
contamination models, then minimum $\chi^{2}-$divergence test behaves better
than both minimum Hellinger distance and loglikelihood ratio tests, in terms
of their \emph{residual adjustment functions} (RAF), because (we refer to
\cite{JimenezShao2001} for the notation)
\[
\left| \frac{A_{\chi^{2}}(-1)}{A_{LR}(-1)}\right| = \frac{1}{2}<1
\quad\mbox{and} \qquad\left| \frac{A_{\chi^{2}}(-1)}{A_{HD}(-1)}\right|
=\frac{1}{4}<1.
\]

Formula (\ref{eqn:1.def_chi2}) is not suitable for statistical purposes as
such. Indeed, suppose that we are interested in testing wether $P$ is in some
class $\Omega$ of distributions with absolutely continuous component. Let
$X=(X_{1},\ldots,X_{n})$ be an i.i.d. sample with unknown distribution $P$.
Assume that $P_{n}:=\frac{1}{n}\sum_{i=1}^{n}\delta_{X_{i}}$, the empirical
measure pertaining to $X$, is the only information available on $P$, where
$\delta_{x}$ is the Dirac measure at point $x$. Then, for all $Q\in\Omega$,
the $\chi^{2}$ distance between $Q$ and $P_{n}$ is infinite. Therefore no
plug-in technique can lead to a definite statistic in this usual case.

Our approach solves this difficulty and is based on the \textquotedblright
dual representation \textquotedblright\ for the $\chi^{2}$ divergence, which
is a consequence of the convexity of the mapping $Q\longmapsto\chi^{2}(Q,P)$,
plus some regularity property; this will be set in Section
\ref{sec:2.estimator}, together with conditions under which $P$ has a
$\chi^{2}- $projection on $\Omega$. We will also provide an estimate for the
function $\frac{dQ^{\ast}}{dP}$ which indicates the local changes induced on
$P$ by the projection operator.

In some cases it is possible to replace $\Omega$ by $\Omega\cap\Lambda_{n}$
\ where $\Lambda_{n}$ is the set of all measures in $M$ whose support is $X$,
when this intersection is not void, as happens when $\Omega$ is defined for
example through moment conditions. This approach is called the Generalized
Likelihood paradigm (see \cite{neweySmith2003} and references therein), and we
will develop in Section 3 a complete study pertaining to such case when
handling the $\chi^{2}$ divergence, in the event that $\Omega$ is defined
through linear constraints, namely when
\begin{equation}
\Omega=\left\{  Q\in M\text{such that}\int f(x)dQ(x)=0\right\}
\label{eqn:1.omegalinear}%
\end{equation}
for some $\mathbb{R}^{k}-$valued function $f$ defined on $\mathbb{R}^{d}$. In
this case the projection $Q^{\ast}$ has a very simple form and its estimation
results as the solution of a linear system of equations, which motivates the
choice of $\chi^{2}$ criterion for tests of the form $H_{0}:P\in\Omega$ with
$\Omega$ as in (\ref{eqn:1.omegalinear}). As is shown in Section 3, by Theorem
\ref{th:2.linear_char} the constrained problem is in fact reduced to an
unconstrained one.

Also for the problem of testing whether $P$ belongs to $\Omega$ our results
include the asymptotic distribution of the test statistics under any $P$ in
the alternative, proving consistency of the procedure, a result that is not
addressed in the current literature on Generalized Likelihood.

In Section 3 we will apply the above results to the case of a test of fit,
where $\Omega=\left\{  P_{0}\right\}  $ is a fixed p.m.

When $\Omega\cap\Lambda_{n}$ is void some smoothing technique has been
proposed, following \cite{Beran77}, substituting $P_{n}$ by some regularized
version; see \cite{MoralesPardoVajda1997}. In those cases we have chosen not
to make any smoothing, exploiting the dual representation in a parametric
context. Section 4 addresses this approach through the study of contamination
models, for a composite problem, when the contamination modifies a
distribution with unknown parameter.

\section{The definition of the estimator}

\label{sec:2.estimator}

\subsection{Some properties of $\chi^{2}-$distance}

\label{subsec:2.duality}

We will consider sets $\Omega$ of signed measures with total mass 1 that
integrate some class of functions $\Phi$. The choice of $\Phi$ depends on the
context as seen below. Let
\begin{equation}
M_{\Phi}:=\left\{  Q\in M\text{ \ such that \ }\int|\varphi|\text{
}d|Q|<\infty,\mbox{ for
all }\varphi\in\Phi\right\}  .\label{eqn:2.M_Phi}%
\end{equation}
We first consider sufficient conditions for the existence of $Q^{\ast}$, the
projection of $P$ on $\Omega.$ We introduce the following notation.

Let ${\boldsymbol{\Phi}}=\Phi\cup\mathcal{B}_{b}$, where $\mathcal{B}_{b}$ is
the class of all measurable bounded functions on $\mathbb{R}^{d}$. Let
$\tau_{{\boldsymbol{\Phi}}}$ be the coarsest topology on $M$ which makes all
mappings $Q\longmapsto\int\varphi dQ$ continuous for all $\varphi
\in{\boldsymbol{\Phi}} $. When $\boldsymbol{\Phi} $ is restricted to
$\mathcal{B}_{b} $, the $\tau_{{\boldsymbol{\Phi}}}$ topology turns out to be
the usual $\tau- $ topology (see e.g. \cite{GOR1979}).

Assume that for all functions $\varphi$ in $\Phi$ there exists some positive
$\varepsilon$ with
\[
\int\varphi^{2+\varepsilon}dP<\infty.
\]
Whenever $\Omega$ is a closed set in $M_{\Phi}$ equipped with the
$\tau_{\boldsymbol{\Phi}} $ topology and $\chi^{2}(\Omega,P)$ is finite, then,
as a consequence of Theorem 2.3 in \cite{Broniatowski-Keziou2003}, $P$ has a
projection in $\Omega$. Moreover, when $\Omega$ is convex, uniqueness is achieved.

In statistical applications the set $\Omega$ is often defined through some
statistical functional; for example, let $\Omega$ defined as in
(\ref{eqn:1.omegalinear}). In this case $\Phi:=\left\{  f\right\} $ and
$\Omega$ is closed by the very definition of $\Phi$; therefore the choice of
the class of functions $\Phi$ is intimately connected with the set $\Omega$
under consideration. As seen in Section 4, and as developed in
\cite{Broniatowski-Keziou2003} also when $\Omega$ is a subset of some
parametric family of distributions, the class $\Phi$ can be defined with
respect to $\Omega$.

We first provide a characterization of the $\chi^{2}-$projection of a p.m. $P$
on some set $\Omega$ in $M$.

Let $\mathcal{D}$ denote the domain of the divergence for fixed $P$, namely
\[
\mathcal{D}=\left\{  Q\in M\text{ such that \ }\chi^{2}(Q,P)<\infty\right\}  .
\]

We have (see \cite{BroniatowskiKeziou2003b}, Theorem 2.6)

\begin{thm}
\label{th:2.characterization} Let $\Omega$ be a subset of $M$. Then

\begin{enumerate}
\item If there exists some $Q^{\ast}$ in $\Omega$ such that for all $Q$ in
$\Omega\cap\mathcal{D}$,
\[
q^{\ast}\in L_{1}(Q)\mbox{ and }\int q^{\ast}dQ^{\ast}\leq\int q^{\ast}dQ
\]
where $q^{\ast}=\frac{dQ^{\ast}}{dP}$, then $Q^{\ast}$ is the $\chi^{2}%
-$projection of $P$ on $\Omega$

\item If $\Omega$ is convex and $P$ has projection $Q^{*}$ on $\Omega$, then,
for all $Q$ in $\Omega$, $q^{*}$ belongs to $L_{1}(P)$ and $\int q^{*}
dQ^{*}\leq\int q^{*} dQ $.
\end{enumerate}
\end{thm}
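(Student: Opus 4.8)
The plan is to prove both directions by exploiting the variational characterization of the $\chi^{2}$-projection through the first-order optimality condition for the convex functional $Q\longmapsto\chi^{2}(Q,P)$. The key computational fact I would set up first is the directional derivative: for $Q^{\ast}$ and any $Q$ in $\Omega\cap\mathcal{D}$, consider the segment $Q_{t}=(1-t)Q^{\ast}+tQ=Q^{\ast}+t(Q-Q^{\ast})$ for $t\in[0,1]$, which has total mass $1$ and remains in $\mathcal{D}$ by convexity of the divergence. Writing $g(t)=\chi^{2}(Q_{t},P)$, a direct differentiation under the integral sign gives
\[
g'(0)=2\int\frac{dQ^{\ast}-dP}{dP}\,(dQ-dQ^{\ast})=2\int q^{\ast}\,dQ-2\int q^{\ast}\,dQ^{\ast},
\]
where the term $\int dP^{-1}dP\,(dQ-dQ^{\ast})=\int(dQ-dQ^{\ast})=0$ drops out because both measures have total mass $1$. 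This identity is the engine behind both parts: it converts the inequality $\int q^{\ast}dQ^{\ast}\leq\int q^{\ast}dQ$ into the statement $g'(0)\geq0$.

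For part (1), I would argue directly. Given the hypothesis $\int q^{\ast}dQ^{\ast}\leq\int q^{\ast}dQ$, the goal is to show $\chi^{2}(Q^{\ast},P)\leq\chi^{2}(Q,P)$ for every $Q\in\Omega$. If $Q\notin\mathcal{D}$ the right side is $\infty$ and there is nothing to prove, so take $Q\in\Omega\cap\mathcal{D}$. The clean way is to expand the difference of divergences algebraically: writing $q=dQ/dP$ and $q^{\ast}=dQ^{\ast}/dP$,
\[
\chi^{2}(Q,P)-\chi^{2}(Q^{\ast},P)=\int(q-1)^{2}dP-\int(q^{\ast}-1)^{2}dP=\int(q-q^{\ast})^{2}dP+2\int(q^{\ast}-1)(q-q^{\ast})dP.
\]
The first term is nonnegative. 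The cross term equals $2\int q^{\ast}(q-q^{\ast})dP-2\int(q-q^{\ast})dP=2\bigl(\int q^{\ast}dQ-\int q^{\ast}dQ^{\ast}\bigr)-0$, which is nonnegative exactly by the assumed inequality. Hence $\chi^{2}(Q,P)\geq\chi^{2}(Q^{\ast},P)$, so $Q^{\ast}$ attains the infimum and is the projection.

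For part (2), I would run the optimality argument in reverse, now using convexity of $\Omega$ to guarantee the segment $Q_{t}$ stays inside $\Omega$. Since $Q^{\ast}$ minimizes $\chi^{2}(\cdot,P)$ over $\Omega$ and $Q_{t}\in\Omega$ for $t\in[0,1]$, the function $g(t)$ has a minimum at $t=0$ on $[0,1]$, forcing $g'(0^{+})\geq0$; by the displayed derivative this is precisely $\int q^{\ast}dQ^{\ast}\leq\int q^{\ast}dQ$. The integrability claim $q^{\ast}\in L_{1}(P)$ follows from $Q^{\ast}\in\mathcal{D}$ via Cauchy--Schwarz, since $\int|q^{\ast}|dP\leq(\int(q^{\ast})^{2}dP)^{1/2}<\infty$. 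The main obstacle I anticipate is the rigorous justification of differentiating under the integral in $g'(0)$ and the interchange of limit and integral implicit in the one-sided derivative: one must dominate the difference quotients uniformly in $t$ near $0$, which is where the finiteness hypotheses on $\chi^{2}(Q,P)$ and $\chi^{2}(Q^{\ast},P)$ (equivalently $q,q^{\ast}\in L_{2}(P)$) enter, guaranteeing $q^{\ast}\in L_{1}(Q)$ and controlling the cross terms; all the rest is the bookkeeping of quadratic expansions.
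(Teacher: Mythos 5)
The paper itself does not prove this theorem: it is stated as a quotation of Theorem 2.6 from the reference Broniatowski--Keziou (2003b), so there is no in-paper argument to compare yours against. Judged on its own terms, your proof is correct in substance and is the natural one: part (1) via the exact quadratic (Pythagorean-type) expansion $\chi^{2}(Q,P)-\chi^{2}(Q^{\ast},P)=\int(q-q^{\ast})^{2}dP+2\left(\int q^{\ast}dQ-\int q^{\ast}dQ^{\ast}\right)$, and part (2) via first-order optimality along the segment $Q_{t}=(1-t)Q^{\ast}+tQ$, which is precisely where convexity of $\Omega$ enters. Note also that your anticipated ``main obstacle'' (justifying differentiation under the integral) is no obstacle at all: for $Q,Q^{\ast}\in\mathcal{D}$ the function $g(t)$ is a quadratic polynomial in $t$ with finite coefficients, so $g'(0^{+})$ exists trivially and no domination argument is needed.

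Two points should be tightened. First, in part (1) you invoke ``the finiteness hypotheses on $\chi^{2}(Q^{\ast},P)$'', but $Q^{\ast}\in\mathcal{D}$ is not a hypothesis of the theorem; it must be derived before your expansion is legitimate. It does follow from the assumptions: if $\Omega\cap\mathcal{D}=\emptyset$ every element of $\Omega$ trivially attains the infimum $\infty$, and otherwise any $Q\in\Omega\cap\mathcal{D}$ gives, using $q^{\ast}\in L_{1}(Q)$ and the assumed inequality, $\int(q^{\ast})^{2}dP=\int q^{\ast}dQ^{\ast}\leq\int q^{\ast}dQ<\infty$, whence $Q^{\ast}\in\mathcal{D}$; your argument needs this sentence. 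Second, in part (2) your segment argument only produces the inequality for $Q\in\Omega\cap\mathcal{D}$: if $Q\notin\mathcal{D}$ then $g(t)=\infty$ for all $t>0$ and minimality at $t=0$ carries no information. This is not a gap you can close, because the inequality can genuinely fail for such $Q$: take $P$ uniform on $[0,1]$, let $q^{\ast}$ equal $2$ on $[0,1/2]$ and $0$ on $(1/2,1]$, let $Q$ have density $(2(x-1/2))^{-1/2}$ on $(1/2,1]$ and $0$ elsewhere (a probability density in $L_{1}(P)$ but not in $L_{2}(P)$), and let $\Omega$ be the segment joining $Q^{\ast}$ and $Q$; then $Q^{\ast}$ is the projection of $P$ on this convex set, yet $\int q^{\ast}dQ=0<2=\int q^{\ast}dQ^{\ast}$. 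So part (2) must be read, consistently with part (1), with $Q$ restricted to $\Omega\cap\mathcal{D}$ (the theorem's wording, like its claim that $q^{\ast}\in L_{1}(P)$ rather than $L_{1}(Q)$, is loose here), and a careful write-up should state that restriction explicitly rather than leave it implicit in where the argument applies.
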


Many statistically relevant problems in estimation and testing pertain to
models defined by linear constraints (Empirical Likelihood paradigm and
others). Section 3 is devoted to this case. We therefore present a
characterization result for the $\chi^{2}-$projection on sets of measures
defined by linear constraints.

Let $\Phi$ be a collection (finite or infinite, countable or not) of real
valued functions defined on $\mathbb{R}^{d}$, which we assume to contain the
function 1. Let $\Omega$ a subset of $M$ be defined by
\[
\Omega=\left\{  Q\in M\text{ \ such that \ }\int gdQ=0\;\text{\ for all
\ }\;g\text{ in }\Phi-\{1\}\right\}  .
\]
Denote $<\Phi>$ the linear span of $\Phi$.

We then have the following result (see \cite{BroniatowskiKeziou2003b}):

\begin{thm}
\label{th:2.linear_char}

\begin{enumerate}
\item $P$ has a projection $Q^{\ast}$ in $\Omega$ iff $Q^{\ast}$ belongs to
$\Omega$ and for all $Q\in\Omega$, $q^{\ast}\in L_{1}(Q)$ and $\int q^{\ast
}dQ^{\ast}\leq\int qdQ^{\ast}$.

\item If $q^{\ast}$ belongs to $<\Phi>$ and $Q^{\ast}$ belongs to $\Omega$,
then $Q^{\ast}$ is the projection of $P$ on $\Omega$.

\item If $P$ has projection $Q^{*}$ on $\Omega$, the $q^{*}$ belongs to
$\overline{<\Phi>}$, the closure of $\Phi$ in $L_{1}(Q^{*})$.
\end{enumerate}
\end{thm}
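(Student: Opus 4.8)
The plan is to derive Parts 1 and 2 quickly from the general characterization in Theorem \ref{th:2.characterization}, exploiting that a set cut out by linear constraints is affine, and to reserve the real work for Part 3, a Hahn--Banach duality argument. First I record that $\Omega$ is affine: if $Q_{0},Q_{1}\in\Omega$ then for every real $t$ the measure $(1-t)Q_{0}+tQ_{1}$ has total mass $1$ and satisfies $\int g\,d((1-t)Q_{0}+tQ_{1})=0$ for all $g\in\Phi-\{1\}$, hence lies in $\Omega$; in particular $\Omega$ is convex. I also note the symmetry $\int q\,dQ^{*}=\int q\,q^{*}\,dP=\int q^{*}\,dQ$, so the two ways of writing the cross term coincide, and that for $Q\in\Omega\cap\mathcal{D}$ both $q$ and $q^{*}$ lie in $L_{2}(P)$ (the relevant $\chi^{2}$ distances being finite), whence $q^{*}\in L_{1}(Q)$ automatically by Cauchy--Schwarz. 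With these observations Part 1 is immediate: the backward implication is Theorem \ref{th:2.characterization}(1), whose hypothesis is required only on $\Omega\cap\mathcal{D}\subseteq\Omega$, while the forward implication is Theorem \ref{th:2.characterization}(2), applicable because $\Omega$ is convex.

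For Part 2, write $q^{*}=c_{0}+\sum_{i}c_{i}g_{i}$ with $g_{i}\in\Phi-\{1\}$, possible since $1\in\Phi$. Then $q^{*}\in L_{1}(Q)$ for every $Q\in\Omega\subseteq M_{\Phi}$, and, using $\int 1\,dQ=1$ together with $\int g_{i}\,dQ=0$, one gets $\int q^{*}\,dQ=c_{0}$, independently of $Q\in\Omega$. Hence $\int q^{*}\,dQ=\int q^{*}\,dQ^{*}$ for all $Q\in\Omega$, so the inequality of Part 1 holds with equality and identifies $Q^{*}\in\Omega$ as the projection.

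Part 3 carries the main effort. Starting from the projection $Q^{*}$, Part 1 gives $\int q^{*}\,dQ^{*}\le\int q^{*}\,dQ$ for all $Q\in\Omega$. Fix any finite signed measure $R$ with $\int g\,dR=0$ for every $g\in\Phi$ and with bounded $d|Q^{*}|$-density; then $R\in M_{\Phi}$, $Q^{*}+tR\in\Omega$ for all real $t$, and $q^{*}\in L_{1}(Q^{*}+tR)$, so substituting $Q=Q^{*}+tR$ and letting $t$ vary over $\mathbb{R}$ forces the equality $\int q^{*}\,dR=0$ (equivalently, the derivative of $t\mapsto\chi^{2}(Q^{*}+tR,P)$ vanishes at $t=0$). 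Next, $q^{*}\in L_{1}(Q^{*})$ because $\int|q^{*}|\,d|Q^{*}|=\int(q^{*})^{2}\,dP<\infty$ by finiteness of $\chi^{2}(Q^{*},P)$, so the closure $\overline{<\Phi>}$ of the span $<\Phi>$ in $L_{1}(Q^{*})$ is well defined. Suppose, for contradiction, that $q^{*}\notin\overline{<\Phi>}$. By Hahn--Banach there is $h\in L_{\infty}(|Q^{*}|)$ with $\int g\,h\,d|Q^{*}|=0$ for all $g\in\,<\Phi>$ but $\int q^{*}h\,d|Q^{*}|\ne 0$. Setting $dR:=h\,d|Q^{*}|$ produces a finite signed measure with bounded density and $\int g\,dR=0$ for all $g\in\Phi$; by the previous step $\int q^{*}\,dR=0$, yet $\int q^{*}\,dR=\int q^{*}h\,d|Q^{*}|\ne 0$, a contradiction. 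Therefore $q^{*}\in\overline{<\Phi>}$.

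The step I expect to be most delicate is the realization, in Part 3, of the abstract separating functional as a genuine perturbation direction: one must verify that $dR=h\,d|Q^{*}|$ is a finite signed measure annihilating $\Phi$ with bounded density, so that $Q^{*}+tR$ stays in $\Omega\subseteq M_{\Phi}$ and every pairing $\int q^{*}\,dR$ and $\int g\,h\,d|Q^{*}|$ is finite. This integrability bookkeeping, routine for probability measures, needs a little care here because $\Omega$ consists of signed measures.
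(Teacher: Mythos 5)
Your proof is correct. Note that the paper itself offers no proof of Theorem \ref{th:2.linear_char}: it is imported from \cite{BroniatowskiKeziou2003b} with a bare citation, so there is no internal argument to compare against, and your reconstruction stands on its own. Parts 1 and 2 are handled exactly as one would hope: affinity of $\Omega$ (so that Theorem \ref{th:2.characterization}(2) applies), the symmetry $\int q\,dQ^{*}=\int qq^{*}\,dP=\int q^{*}\,dQ$ reconciling the two ways the cross term is written, and the observation that $\int q^{*}\,dQ=c_{0}$ is constant over $\Omega$ when $q^{*}\in\,<\Phi>$, which makes the inequality of Theorem \ref{th:2.characterization}(1) hold with equality. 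In Part 3 the two ingredients fit together correctly: (a) for a signed measure $R$ with bounded $|Q^{*}|$-density annihilating all of $\Phi$ (including $1$), the whole line $Q^{*}+tR$, $t\in\mathbb{R}$, stays in $\Omega$ --- this is precisely what the signed-measure setting $M$ buys over $M_{1}$, since no positivity constraint truncates the line --- and optimality then forces $\int q^{*}\,dR=0$; (b) Hahn--Banach separation in $L_{1}(|Q^{*}|)$, whose dual is $L_{\infty}(|Q^{*}|)$ because $|Q^{*}|$ is finite, converts any failure of $q^{*}\in\overline{<\Phi>}$ into exactly such an $R$. Your integrability bookkeeping ($q^{*}\in L_{1}(|Q^{*}|)$ since $\int|q^{*}|\,d|Q^{*}|=\int(q^{*})^{2}dP<\infty$; $\Phi\subset L_{1}(|Q^{*}|)$ since $\Omega\subseteq M_{\Phi}$; $R\ll|Q^{*}|\ll P$ so all pairings are finite) is the part that genuinely needs checking, and it checks out. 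Two caveats, both inherited from the paper's statements rather than introduced by you: Theorem \ref{th:2.characterization}(2) as printed says $q^{*}\in L_{1}(P)$, and you silently use the evidently intended reading $q^{*}\in L_{1}(Q)$; and expressions involving $q=dQ/dP$ are meaningless for $Q\in\Omega$ not absolutely continuous with respect to $P$, which your restriction to $\Omega\cap\mathcal{D}$ in the backward half of Part 1 handles in the only sensible way.
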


\begin{rem}
\label{rem:2.counterexample} The above result only provides a partial answer
to the characterization of the projections. Let $P$ be the uniform
distribution on $[0,1]$. The set $M_{1}(P)$ of all p.m.'s absolutely
continuous with respect to $P$ is a closed subset of $M_{\Phi}$, when
$\Phi:=\left\{  x\mapsto x\right\}  \cup\left\{  x\mapsto1\right\}  $. Let
$\Omega:=\left\{  Q\in M_{1}(P):\;\int xdQ(x)=\frac{1}{4}\right\}  $. Then $P$
has a projection on $\Omega$ and $\frac{dQ^{\ast}}{dP}(x){}{\Large 1}%
_{\left\{  q^{\ast}>0\right\}  }(x)=c_{0}+c_{1}x$, with $q^{\ast}%
=\frac{dQ^{\ast}}{dP} $. The support of $Q^{\ast}$ is strictly included in
$[0,1]$. Otherwise we obtain $c_{0}=\frac{5}{2}$ and $c_{1}=-3$, a
contradiction, since then $Q^{\ast}$ is not a probability measure.
\end{rem}

\subsection{An alternative version of the $\chi^{2}$}

\label{subsec:3.duality} The $\chi^{2}$ distance defined on $M$ for fixed $P $
in $M_{1}$ through $\chi^{2}(Q,P)=\int\left(  \frac{dQ}{dP}-1\right)  ^{2}dP$
is a convex function; as such it is the upper envelope of its support
hyperplanes. The first result, which is Proposition 2.1 in
\cite{Broniatowski-Keziou2003}, provides the description of the hyperplanes in
$M_{\Phi}$.

\begin{prop}
\label{th:2.hausdorff} Equip $M_{\Phi}$ with the $\tau_{\boldsymbol{\Phi}}%
-$topology. Then $M_{\Phi}$ is a Hausdorff locally convex topological space.
Further, the topological dual space of $M_{\Phi}$ is the set of all mappings
$Q\mapsto\int fdQ$ when $f$ belongs to $<{\boldsymbol{\Phi}} >$.
\end{prop}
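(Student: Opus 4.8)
The plan is to establish two separate assertions: that $M_{\Phi}$ equipped with $\tau_{\boldsymbol{\Phi}}$ is a Hausdorff locally convex topological vector space, and that its topological dual is exactly the family of evaluation maps $Q\mapsto\int f\,dQ$ for $f\in\langle\boldsymbol{\Phi}\rangle$. Since $\tau_{\boldsymbol{\Phi}}$ is by definition the coarsest topology making every seminorm-type map $Q\mapsto\int\varphi\,dQ$ continuous, it is an \emph{initial topology} (weak topology) generated by the linear functionals $\{\,Q\mapsto\int\varphi\,dQ : \varphi\in\boldsymbol{\Phi}\,\}$. The first step is therefore to recall the standard fact that the initial topology induced by a separating family of linear functionals on a vector space is automatically a locally convex vector topology: the seminorms $p_{\varphi}(Q)=\left|\int\varphi\,dQ\right|$ generate it, addition and scalar multiplication are continuous, and a local base at $0$ is given by finite intersections of sets $\{Q:p_{\varphi}(Q)<\eps\}$.

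For the Hausdorff property I would verify that the generating family separates points of $M_{\Phi}$. Given two distinct measures $Q_{1}\neq Q_{2}$ in $M_{\Phi}$, their difference $Q_{1}-Q_{2}$ is a nonzero signed measure with total mass $0$, so there exists a bounded measurable function $\varphi\in\mathcal{B}_{b}\subseteq\boldsymbol{\Phi}$ with $\int\varphi\,d(Q_{1}-Q_{2})\neq0$; hence $\int\varphi\,dQ_{1}\neq\int\varphi\,dQ_{2}$, and the corresponding functional separates them. Since $\mathcal{B}_{b}$ is included in $\boldsymbol{\Phi}$ by construction, this is immediate and guarantees that $\tau_{\boldsymbol{\Phi}}$ is Hausdorff. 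Combined with the preceding step, this settles the first assertion.

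The identification of the dual is where the real content lies. One inclusion is easy: every map $Q\mapsto\int f\,dQ$ with $f\in\langle\boldsymbol{\Phi}\rangle$ is a finite linear combination of the generating functionals, hence continuous for $\tau_{\boldsymbol{\Phi}}$, so $\langle\boldsymbol{\Phi}\rangle$ sits inside the dual (as functionals). The reverse inclusion is the main obstacle and rests on the classical duality theorem for weak topologies: if a topology on a vector space is the initial topology for a family $\mathcal{F}$ of linear functionals, then every $\tau_{\boldsymbol{\Phi}}$-continuous linear functional is a \emph{finite} linear combination of elements of $\mathcal{F}$. I would invoke this algebraic lemma (continuity forces a bound of the form $|L(Q)|\le C\max_{i}|\int\varphi_{i}\,dQ|$ for finitely many $\varphi_{i}$, and a standard argument on the intersection of kernels then expresses $L$ as a combination of the $\int\varphi_{i}\,d(\cdot)$). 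This yields precisely that any continuous $L$ has the form $Q\mapsto\int f\,dQ$ with $f\in\langle\boldsymbol{\Phi}\rangle$.

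The delicate point to check carefully is that distinct elements $f$ of $\langle\boldsymbol{\Phi}\rangle$ genuinely give distinct functionals on $M_{\Phi}$, so that the dual is correctly described; this again follows from the separation property established above, since $M_{\Phi}$ is rich enough (it contains differences of probability measures absolutely continuous with respect to $P$) to detect any nonzero integrand. Because this proposition is quoted as Proposition 2.1 of \cite{Broniatowski-Keziou2003}, I expect the proof to be a direct application of these standard functional-analytic facts rather than anything requiring the integrability hypotheses beyond ensuring the functionals are well defined and finite on all of $M_{\Phi}$.
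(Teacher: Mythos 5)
Your proof is correct and is essentially the intended argument: the paper itself contains no proof of this proposition, quoting it as Proposition 2.1 of \cite{Broniatowski-Keziou2003}, and the proof in that reference is precisely the weak-topology argument you give (the seminorms $p_{\varphi}(Q)=\left|\int\varphi\,dQ\right|$ generate a locally convex topology, Hausdorffness follows because $\mathcal{B}_{b}\subset\boldsymbol{\Phi}$ separates signed measures via indicator functions, and the dual of an initial topology induced by linear functionals is the linear span of those functionals, by the classical kernel-intersection lemma). The one point to watch is that, with this paper's definitions, $M$ --- and hence $M_{\Phi}$ --- consists of signed measures of total mass $1$ and so is \emph{not} a vector space (in particular it does not contain differences of probability measures, contrary to your closing paragraph); the topological-vector-space and duality machinery must be applied to the linear space of \emph{all} signed finite measures integrating $\boldsymbol{\Phi}$, of which $M_{\Phi}$ is a convex subset carrying the induced topology --- a glitch inherited from the paper's transcription of the cited result rather than a flaw in your reasoning.
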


Proposition 2.3 in \cite{Broniatowski-Keziou2003} asserts that the $\chi^{2} $
distance defined on $M$ for fixed $P$ in $M_{1}$ is l.s.c. in $\left(
M_{\Phi},\tau_{{\boldsymbol{\Phi}} }\right)  $. We can now state the duality lemma.

Define on $<\boldsymbol{\Phi} >$, the Fenchel-Legendre transform of $\chi
^{2}(\cdot,P)$
\begin{equation}
\label{eqn:T(f,P)}T(f,P):=\sup_{Q\in M_{\Phi}}\int fdQ-\chi^{2}(Q,P).
\end{equation}

We have

\begin{lem}
\label{th:2.dualitylemma} The function $Q\longmapsto\chi^{2}(Q,P)$ admits the
representation
\begin{equation}
\chi^{2}(Q,P)=\sup_{f\in<{\boldsymbol{\Phi}}>}\int
fdQ-T(f,P).\label{eqn:2.dualrepres}%
\end{equation}

\end{lem}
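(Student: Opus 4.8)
The plan is to recognize \eqref{eqn:2.dualrepres} as the assertion that $\chi^{2}(\cdot,P)$ coincides with its own biconjugate, and to deduce it from the Fenchel--Moreau (biconjugation) theorem. To set this up, write $g:=\chi^{2}(\cdot,P)$ and note that $T(f,P)$ in \eqref{eqn:T(f,P)} is exactly the Fenchel--Legendre conjugate $g^{\ast}$ of $g$ on the space $M_{\Phi}$, where the duality pairing between $M_{\Phi}$ and its topological dual is $\langle Q,f\rangle=\int f\,dQ$. By Proposition \ref{th:2.hausdorff}, $(M_{\Phi},\tau_{\boldsymbol{\Phi}})$ is Hausdorff and locally convex, and its dual is precisely the set of mappings $Q\mapsto\int f\,dQ$ for $f\in<\boldsymbol{\Phi}>$; hence the right-hand side of \eqref{eqn:2.dualrepres}, namely $\sup_{f\in<\boldsymbol{\Phi}>}\bigl(\int f\,dQ-T(f,P)\bigr)$, is exactly the biconjugate $g^{\ast\ast}(Q)$ evaluated through this pairing.

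It then remains to verify that $g$ satisfies the hypotheses of Fenchel--Moreau on $(M_{\Phi},\tau_{\boldsymbol{\Phi}})$, i.e. that $g$ is proper, convex, and lower semicontinuous. Properness is immediate: $g\geq 0$ everywhere, so $g$ never takes the value $-\infty$, while $g(P)=\chi^{2}(P,P)=0<\infty$, so $g\not\equiv+\infty$. Convexity of $Q\mapsto\chi^{2}(Q,P)$ was already noted in the text and follows from the convexity of $t\mapsto(t-1)^{2}$. Lower semicontinuity in $(M_{\Phi},\tau_{\boldsymbol{\Phi}})$ is precisely the content of Proposition 2.3 of \cite{Broniatowski-Keziou2003}, quoted above. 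With these three properties in hand, the Fenchel--Moreau theorem gives $g=g^{\ast\ast}$, that is,
\[
\chi^{2}(Q,P)=g(Q)=g^{\ast\ast}(Q)=\sup_{f\in<\boldsymbol{\Phi}>}\left(\int f\,dQ-T(f,P)\right),
\]
which is \eqref{eqn:2.dualrepres}.

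The one point requiring care, and the step I expect to be the main (though modest) obstacle, is the bookkeeping that matches $T(f,P)$ and the displayed supremum with the conjugate and biconjugate for the correct pairing. One must genuinely invoke Proposition \ref{th:2.hausdorff} to guarantee that the dual of $M_{\Phi}$ is exactly $<\boldsymbol{\Phi}>$ (no larger and no smaller), so that the supremum defining $g^{\ast\ast}$ ranges over the whole dual space and not a proper subset of it; otherwise the biconjugation identity could fail. Since the assignment $f\mapsto\bigl(Q\mapsto\int f\,dQ\bigr)$ need not be injective, I would also remark that parametrizing the dual by $<\boldsymbol{\Phi}>$ rather than by the quotient leaves the supremum unchanged, so the biconjugate is faithfully represented by the stated supremum over $f\in<\boldsymbol{\Phi}>$. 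All remaining verifications are the routine properness and sign observations above.
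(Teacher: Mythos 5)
Your proof is correct and is essentially the argument the paper intends: the lemma is precisely the Fenchel--Moreau biconjugation identity, obtained from convexity of $Q\mapsto\chi^{2}(Q,P)$, its lower semicontinuity in $(M_{\Phi},\tau_{\boldsymbol{\Phi}})$ (Proposition 2.3 of \cite{Broniatowski-Keziou2003}), and the identification of the topological dual with $<\boldsymbol{\Phi}>$ via Proposition \ref{th:2.hausdorff}. Your additional remarks on properness (which tacitly uses $P\in M_{\Phi}$, guaranteed by the section's standing moment assumption) and on the non-injectivity of $f\mapsto\bigl(Q\mapsto\int f\,dQ\bigr)$ are correct and only make explicit what the paper leaves implicit.
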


Standard optimization techniques yield
\[
T(f,P)=\int fdP+\frac{1}{4}\int f^{2}dP
\]
for all $f\in<{\boldsymbol{\Phi}} >$, see e.g. \cite{Aze1997}, Chapter 4.

The function $f^{\ast}=2\left(  \frac{dQ^{\ast}}{dP}-1\right)  $ is the
supremum in (\ref{eqn:2.dualrepres}) as can be seen through classical convex
optimization procedures.

We now consider a subclass $\mathcal{F}$ in $<{\boldsymbol{\Phi}}> $ and we assume:

\begin{itemize}
\item[(C1)] $f^{*}$ belongs to $\mathcal{F}$. \label{(C1)}
\end{itemize}

Therefore
\[
\chi^{2}(Q,P)=\sup_{f\in\mathcal{F}}\int fdQ-T(f,P)
\]
which we call the \emph{dual representation} of the $\chi^{2}$.

This can be restated as follows: let
\[
m_{f}(x):= \int f dQ - \left( f(x)+\frac{1}{4} f^{2}(x)\right) .
\]
Then
\begin{equation}
\label{eqn:2.chi2dual}\chi^{2}(Q,P)=\sup_{f \in\mathcal{F}}\int m_{f}(x)dP(x).
\end{equation}

Hence we have
\begin{equation}
\label{eqn:2.chi2Omegadual}\chi^{2}(\Omega,P)=\inf_{Q \in\Omega}\sup_{f
\in\mathcal{F}}\int m_{f}(x)dP(x).
\end{equation}

In the case when $\Omega$ is defined through a finite number of linear
constraints, say
\[
\Omega=\left\{  Q\in M:\;\int f_{i}(x)dQ(x)=a_{i},\;1\leq i\leq k\right\}  ,
\]
when $P$ has a projection $Q^{\ast}$ on $\Omega$ and $supp\{Q^{\ast}\}$ is
known to coincide with that of $P$, then we may choose $\mathcal{F}$ as the
linear span of $\{1,f_{1},\ldots,f_{k}\}$ and (\ref{eqn:2.chi2Omegadual})
turns out to be a parametric unconstrained optimization problem, since, by
Theorem \ref{th:2.linear_char} (3)
\[
\chi^{2}(\Omega,P)=\sup_{c_{0},c_{1},\ldots,c_{k}}c_{0}+\sum_{i=1}^{k}%
c_{i}a_{i}-T\left(  c_{0}+\sum_{i=1}^{k}c_{i}f_{i},P\right)  .
\]

In some other cases we may have a complete description of all functions
$\frac{dQ}{dP}$ when $Q$ belongs to $\Omega$. A typical example is when $P$
and $Q$ belong to parametric families.

\subsection{The estimator $\chi_{n}^{2}$}

\label{subsec:2.estimator}

Let us now present the estimate of $\chi^{2}(\Omega,P)$.

Together with an i.i.d. sample $X_{1},\ldots,X_{n}$ with common unknown
distribution $P$, define the estimate of $\chi^{2}(Q,P)$ through
\begin{equation}
\chi_{n}^{2}(Q,P):=\sup_{f\in\mathcal{F}}\int m_{f}(x)dP_{n}%
(x)\label{eqn:2.def_chi_n}%
\end{equation}
a plug-in version of (\ref{eqn:2.chi2dual}).

We also define the estimate of $\chi^{2}(\Omega,P)$ through
\begin{equation}
\chi_{n}^{2}(\Omega,P):=\inf_{Q\in\Omega}\sup_{f\in\mathcal{F}}\int
m_{f}(x)dP_{n}(x).\label{estim chi2}%
\end{equation}

These estimates may seem cumbersome. However, in the case when we are able to
reduce the class $\mathcal{F}$ to a reasonable degree of complexity, these
estimates perform quite well and can be used for testing $P\in\Omega$ against
$P\not \in \Omega$. This will be made clear in the last two sections which
serve as examples for the present approach.

In some cases it is possible to commute the $\sup$ and the $\inf$ operators in
(\ref{eqn:2.chi2Omegadual}), which turns out to become%

\begin{equation}
\chi^{2}(\Omega,P)=\sup_{f\in\mathcal{F}}\inf_{Q\in\Omega}\int
fdQ-T(f,P),\label{eqn:2.chi2_minimax}%
\end{equation}
in which the $\inf$ operator acts only on the linear functional $\int fdQ$.

%Sometimes the $\inf $ operation may be somehow easy. This will be emphasized
%in the second application, pertaining to test for contamination of a
%parametric model.

Also, when (\ref{eqn:2.chi2_minimax}) holds, we may define an estimate of
$\chi^{2}(\Omega,P)$ through
\begin{equation}
\label{eqn:2.chi_n_minimax}\overline{\chi}_{n}^{2}(\Omega,P)=\sup_{f
\in\mathcal{F}}\inf_{Q \in\Omega} \int f dQ- T(f,P_{n}).
\end{equation}

When (\ref{eqn:2.chi2_minimax}) holds, it is quite easy to get the limit
properties of $\overline{\chi}_{n}^{2}$.

Indeed, by (\ref{eqn:2.chi2_minimax}) and (\ref{eqn:2.chi_n_minimax})%

\begin{align*}
\overline{\chi}_{n}^{2}(\Omega,P)-\chi^{2}(\Omega,P) & = \left( \sup_{f
\in\mathcal{F}}\inf_{Q \in\Omega}\int f dQ- T(f,P_{n})\right) -\left( \sup_{f
\in\mathcal{F}}\inf_{Q \in\Omega}\int f dQ- T(f,P)\right) .
\end{align*}

Now define
\[
\phi_{R}(f):=\inf_{Q\in\Omega}\int fdQ-T(f,R)=\inf_{Q\in\Omega}\int
fdQ-\int\left(  f+\frac{1}{4}f^{2}\right)  dR,
\]
a concave function of $f$.

When $\mathcal{F}$ is compact in a topology for which $\phi_{R}$ is uniformly
continuous for all $R$ in $M_{1}$, then a sufficient condition for the a.s.
convergence of $\overline{\chi}_{n}^{2}(\Omega,P)$ to $\chi^{2}(\Omega,P)$ is
\[
\lim_{n\rightarrow\infty}\sup_{f\in\mathcal{F}}\left\vert \phi_{P_{n}}%
(f)-\phi_{P}(f)\right\vert =0\;\;a.s.
\]
which in turn is
\[
\lim_{n\rightarrow\infty}\sup_{f\in\mathcal{F}}\left\vert \int\left(
f+\frac{1}{4}f^{2}\right)  dP_{n}-\int\left(  f+\frac{1}{4}f^{2}\right)
dP\right\vert =0\;\;a.s.
\]

This clearly holds when the class of functions $\left\{  \left(  f+\frac{1}%
{4}f^{2}\right)  ,\;f\in\mathcal{F}\right\}  $ satisfies the functional
Glivenko-Cantelli (GC) condition (see \cite{Pollard84}).

The limit distribution of the statistic $\overline{\chi}_{n}^{2}(\Omega,P)$
under $H1$, i.e. when $P$ does not belong to $\Omega$, can be obtained under
the following hypotheses, following closely the proof of Theorem 3.6 in
\cite{Broniatowski2002}, where a similar result is proved for the
Kullback-Leibler divergence estimate.

Assume

\begin{itemize}
\item[(C2)] \label{(C2)} $P$ has a unique projection $Q^{\ast}$ on $\Omega$.

\item[(C3)] \label{(C3)} The class $\mathcal{F}$ is compact in the sup-norm.

\item[(C4)] \label{(C4)} The class $\left\{ f+\frac{1}{4}f^{2}, \; f
\in\mathcal{F}\right\} $ is a functional Donsker class.
\end{itemize}

We then have

\begin{thm}
\label{th:2.weakconv_H1} Under $H1$, assume that \emph{(C1)}--\emph{(C4)}
hold. The asymptotic distribution of
\[
\sqrt{n}\left( \overline{\chi}_{n}^{2}(\Omega,P)-\chi^{2}(\Omega,P)\right)
\]
is that of $B_{P}(g^{*})$, where $B_{P}(\cdot)$ is the $P-$Brownian bridge
defined on $\mathcal{F}$, and $g^{*}=-f^{*}-\frac{1}{4}{f^{*}}^{2}$.
\end{thm}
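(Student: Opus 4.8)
The plan is to realize $\sqrt{n}\bigl(\overline{\chi}_n^2(\Omega,P)-\chi^2(\Omega,P)\bigr)$ as the image of an empirical process under a suitably differentiable functional, so that the limit follows from the functional delta method. First I would rewrite both quantities using the minimax representation. Recall from \eqref{eqn:2.chi2_minimax} and \eqref{eqn:2.chi_n_minimax} that
\[
\overline{\chi}_n^2(\Omega,P)=\sup_{f\in\mathcal{F}}\phi_{P_n}(f),\qquad \chi^2(\Omega,P)=\sup_{f\in\mathcal{F}}\phi_{P}(f),
\]
where $\phi_R(f)=\inf_{Q\in\Omega}\int f\,dQ-\int\bigl(f+\tfrac14 f^2\bigr)\,dR$. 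The crucial observation is that the dependence of $\phi_R(f)$ on the measure $R$ is \emph{linear}: only the term $\int(f+\tfrac14 f^2)\,dR$ involves $R$, while the $\inf_{Q\in\Omega}\int f\,dQ$ piece is a constant in $R$. Thus the map $R\mapsto\sup_{f\in\mathcal{F}}\phi_R(f)$ is the supremum of a family of affine functionals of $R$, and its directional derivative at $P$ in direction $h$ is governed by the maximizer $f^*$ of $f\mapsto\phi_P(f)$.

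Next I would identify that unique maximizer. By (C2) the projection $Q^*$ is unique, and by the convex-optimization identity stated after the duality lemma the supremum in \eqref{eqn:2.dualrepres} is attained at $f^*=2\bigl(\tfrac{dQ^*}{dP}-1\bigr)$, which lies in $\mathcal{F}$ by (C1). Writing $g^*=-f^*-\tfrac14{f^*}^2$ as in the statement, the linearization gives
\[
\overline{\chi}_n^2(\Omega,P)-\chi^2(\Omega,P)\approx \int g^*\,d(P_n-P),
\]
up to a remainder I must control. Multiplying by $\sqrt n$, the leading term is $\sqrt n\int g^*\,d(P_n-P)=\mathbb{G}_n(g^*)$, the empirical process evaluated at the fixed function $g^*\in\{f+\tfrac14 f^2:f\in\mathcal{F}\}$. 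By (C4) this class is Donsker, so $\mathbb{G}_n\rightsquigarrow B_P$ uniformly over $\mathcal{F}$, and in particular $\mathbb{G}_n(g^*)\rightsquigarrow B_P(g^*)$, which is exactly the claimed limit.

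The main obstacle is the rigorous control of the remainder, i.e. justifying that the supremum behaves to first order as if it were attained at the fixed $f^*$ rather than at the data-dependent maximizer $f_n$ of $\phi_{P_n}$. This is the argmax/delta-method step. The plan is to proceed as in the proof of Theorem 3.6 of \cite{Broniatowski2002}: use compactness of $\mathcal{F}$ in the sup-norm (C3) together with the Glivenko--Cantelli consequence of (C4) to show $f_n\to f^*$, then exploit uniform asymptotic equicontinuity of $\mathbb{G}_n$ over the Donsker class to replace $\mathbb{G}_n(g_n)$ by $\mathbb{G}_n(g^*)$ with a $o_P(1)$ error, where $g_n=-f_n-\tfrac14 f_n^2$. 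Concretely, one sandwiches the difference:
\[
\mathbb{G}_n(g^*)\le \sqrt n\bigl(\overline{\chi}_n^2(\Omega,P)-\chi^2(\Omega,P)\bigr)\le \mathbb{G}_n(g_n),
\]
and the asymptotic equicontinuity forces $\mathbb{G}_n(g_n)-\mathbb{G}_n(g^*)\to 0$ in probability. The uniform continuity of $f\mapsto g=f+\tfrac14 f^2$ on the sup-norm-compact $\mathcal{F}$ guarantees that $f_n\to f^*$ translates into closeness of $g_n$ and $g^*$ in the relevant seminorm, completing the argument.
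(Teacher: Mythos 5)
Your proposal is correct and takes essentially the same route as the paper: the paper gives no self-contained argument but states that the proof follows closely that of Theorem 3.6 in \cite{Broniatowski2002}, which is precisely the linearization/sandwich scheme you describe --- bounding $\sqrt{n}\left(\overline{\chi}_{n}^{2}(\Omega,P)-\chi^{2}(\Omega,P)\right)$ between the empirical process at the fixed $g^{*}$ and at the data-dependent maximizer, then using (C1)--(C2) to identify $f^{*}$, (C3) together with the Glivenko--Cantelli consequence of (C4) for argmax consistency, and the Donsker property (C4) for the asymptotic equicontinuity step that kills the remainder.
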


Therefore $\sqrt{n}\left(  \overline{\chi}_{n}^{2}(\Omega,P)-\chi^{2}%
(\Omega,P)\right)  $ has an asymptotic centered normal distribution with
variance $E_{P}\left(  \left(  f^{\ast}+\frac{1}{4}{f^{\ast}}^{2}\right)
^{2}(X)\right)  -E_{P}\left(  \left(  -f^{\ast}-\frac{1}{4}{f^{\ast}}%
^{2}\right)  (X)\right)  ^{2}$, where $X$ has law $P$.

The asymptotic distribution of $\overline{\chi}_{n}^{2}$ under $H0$, i.e. when
$P $ belongs to $\Omega$, cannot be obtained in a general frame and must be
derived accordingly to the context.

In the next Sections we develop two applications of the above statements. In
the first one we consider sets $\Omega$ defined by an infinite number of
linear constraints. We approximate $\Omega$ through some sieve technique and
provide consistent test for $H_{0}:P\in\Omega$. We specialize this problem to
the two sample test for paired data. So, in this first application, we
basically use the representation of the projection $Q^{\ast} $ of $P$ on
linear sets as described through Theorem \ref{th:2.linear_char}. In this first
range of applications we will project $P_{n}$ on the non void set $\Omega
\cap\Lambda_{n}$.

The second application deals with parametric models and test for
contamination. We obtain a consistent test for the case when $\Omega$ is a set
of parametrized distributions $F_{\theta}$ for $\theta$ in $\Theta
\subset\mathbb{R}^{d}$. The test is
\[
H0:\; P \in\Omega=\left\{  F_{\theta},\: \theta\in\Theta\right\} ,\:
\mbox{i.e.} \; \lambda=0 \newline\quad\mbox{vs}\quad H1: \:P\in\{(1-\lambda
)F_{\theta}+\lambda R,\, \lambda\not = 0,\, \theta\in\Theta\}.
\]
In this example we project $P_{n}$ on a set of absolutely continuous
distributions and we make use of the minimax assumption
(\ref{eqn:2.chi2_minimax}) which we prove to hold.

\section{Test of a set of linear constraints}

\label{sec:3.linear} Let $\mathcal{F}$ be a countable family of real-valued
functions defined on $\mathbb{R}^{d}$, $\{a_{i}\}_{i=1}^{\infty}$ a real
sequence and%

\begin{equation}
\Omega:=\left\{  Q\in M\text{ such that \ }\int f_{i}dQ(x)=a_{i}%
,\;i\geq1\right\} \label{eqn:3.omega_linear}%
\end{equation}
We assume that $\Omega$ is not void. In accordance with the previous section
we assume that the function $f_{0}:=1$ belongs to $\mathcal{F}$ with $a_{0}=1$.

Let $X_{1},\ldots,X_{n}$ be an i.i.d. sample with common distribution $P$.

We intend to propose a test for $H_{0}:P \in\Omega$ vs $H_{1}: P
\not \in \Omega$.

We first consider the case when $\mathcal{F}$ is a finite collection of
functions, and next extend our results to the infinite case.

For notational convenience we write $Pf$ for $\int fdP$ whenever defined.

\subsection{Finite number of linear constraints}

\label{subsec:3.finite} Consider the set $\Omega$ defined in
(\ref{eqn:3.omega_linear}) with $card\{\mathcal{F}\}=k$. Introduce the
estimate of $\chi^{2}(\Omega,P)$ through
\begin{equation}
\chi_{n}^{2}(\Omega,P)=\inf_{Q\in\Omega\cap\Lambda_{n}}\chi^{2}(Q,P_{n}%
).\label{estim lin}%
\end{equation}
Embedding the projection device in $M\cap\Lambda_{n}$ instead of $M_{1}%
\cap\Lambda_{n}$ yields to a simple solution for the optimum in
(\ref{estim lin}), since no inequality constrains will be used. Also the
topological context is simpler than as mentioned in the previous section since
the projection of $P_{n}$ belongs to $\mathbb{R}^{n}$. When developed in
$M_{1}\cap\Lambda_{n}$ this approach is known as the Generalized Likelihood
(GEL) paradigm (see \cite{MR2002302}). Our approach differs from the lattest
through the use of the dual representation (\ref{estim chi2}), which provides
consistency of the test procedure.

%It holds
%\begin{equation*}
%\chi _{n}^{2}(\Omega ,P)=\chi ^{2}(\Omega ,P_{n}).
%\end{equation*}

It is readily checked that
\[
\chi_{n}^{2}(\Omega,P)=\overline{\chi}_{n}^{2}(\Omega,P)
\]
The set $\Omega\cap\Lambda_{n}$ is a convex closed subset in $\mathbb{R}^{n}$.
When the projection of $P_{n}$ on $\Omega\cap\Lambda_{n}$ exists uniqueness
therefore holds. In the next section we develop various properties of our
estimates, which are based on the duality formula (\ref{eqn:2.def_chi_n}).

The next subsections provide all limit properties of $\chi_{n}^{2}(\Omega,P)$.

\subsubsection{Notation and basic properties}

Let $Q_{0}$ be any fixed measure in $\Omega$. By (\ref{eqn:2.chi2Omegadual})
\begin{align}
\chi^{2}\left(  \Omega,P\right)   &  =\sup_{f\in< \mathcal{F}> }\left(
Q_{0}-P\right)  f-\frac{1}{4}Pf^{2}\nonumber\\
&  =\sup_{a_{0},a_{1,}\ldots,a_{k}}\sum_{i=1}^{k}a_{i}\left(  Q_{0}-P\right)
f_{i}-\frac{1}{4}P\left(  \sum_{i=1}^{k}a_{i}f_{i}+a_{0}\right)
^{2}.\label{linear-chi_n}%
\end{align}
since, for $Q$ in $\Omega$ and for all $f$ in $\mathcal{F}$, $Qf=Q_{0}f$ and
\[
\chi^{2}_{n}=\sup_{a_{0},a_{1,}\ldots,a_{k}}\sum_{i=1}^{k}a_{i}\left(
Q_{0}-P_{n}\right)  f_{i}-\frac{1}{4}P_{n}\left(  \sum_{i=1}^{k}a_{i}%
f_{i}+a_{0}\right)  ^{2}.
\]

The infinite dimensional optimization problem in (\ref{eqn:2.chi2Omegadual})
thus reduces to a $(k+1)-$dimensional one, much easier to handle.

We can write the chi-square and $\chi_{n}^{2}$ through a quadratic form.

Define the vectors $\underline{\nu}_{n}$ e $\underline{\nu}$ by
\begin{align}
{\underline{\nu}_{n}}^{\prime} &  =\underline{\nu}(\mathcal{F},P_{n})^{\prime
}=\left\{  \left(  Q_{0}-P_{n}\right)  f_{1},\ldots,\left(  Q_{0}%
-P_{n}\right)  f_{k}\right\} \label{nugamma}\\
{\underline{\nu}}^{\prime} &  =\underline{\nu}(\mathcal{F},P)^{\prime
}=\left\{  \left(  Q_{0}-P\right)  f_{1},\ldots,\left(  Q_{0}-P\right)
f_{k}\right\} \nonumber\\
\underline{\gamma}_{n} &  =\underline{\gamma}_{n}(\mathcal{F})=\sqrt
{n}\left\{  \left(  P_{n}-P\right)  f_{1},\ldots,\left(  P_{n}-P\right)
f_{k}\right\}  =\sqrt{n}\left(  \underline{\nu}-\underline{\nu}_{n}\right)
.\nonumber
\end{align}
Let $S$ be the covariance matrix of $\underline{\gamma}_{n}$. Write $S_{n}$
for the empirical version of $S$, obtained substituting $P$ by $P_{n}$ in all
entries of $S$.

\begin{prop}
\label{th:matrixform} Let $\Omega$ be as in $\left(  \ref{eqn:3.omega_linear}%
\right)  $ and let $card\left\{  \mathcal{F}\right\}  $ be finite. We then have

\begin{enumerate}
\item[(i)] $\chi_{n}^{2}={\underline{\nu}}_{n}^{\prime}S_{n}^{-1}%
\underline{\nu}_{n}$

\item[(ii)] $\chi^{2}\left(  \Omega,P\right)  ={\underline{\nu}}^{\prime
}S^{-1}\underline{\nu}$
\end{enumerate}
\end{prop}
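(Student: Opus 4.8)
The plan is to prove both identities by exploiting the fact that the dual representation reduces the problem to an explicit finite-dimensional quadratic optimization, and then to recognize the optimal value of that quadratic form. I would establish (ii) first, and then observe that (i) follows by the identical computation with $P$ replaced by $P_{n}$, since $\chi_{n}^{2}=\overline{\chi}_{n}^{2}(\Omega,P)$ and formula (\ref{linear-chi_n}) already exhibits the empirical version with exactly the same algebraic structure.

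\medskip

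First I would start from the representation (\ref{linear-chi_n}), which expresses $\chi^{2}(\Omega,P)$ as the supremum over $(a_{0},a_{1},\ldots,a_{k})$ of
\[
\Psi(\va):=\sum_{i=1}^{k}a_{i}(Q_{0}-P)f_{i}-\frac{1}{4}P\Bigl(\sum_{i=1}^{k}a_{i}f_{i}+a_{0}\Bigr)^{2}.
\]
Writing $\vnu'=\{(Q_{0}-P)f_{1},\ldots,(Q_{0}-P)f_{k}\}$ and expanding the quadratic term, I would first carry out the optimization over the free intercept $a_{0}$. Differentiating in $a_{0}$ shows that the optimal $a_{0}$ centers the linear combination, i.e. it forces $P\bigl(\sum_{i}a_{i}f_{i}+a_{0}\bigr)=0$, so that after substitution the quadratic penalty $\frac14 P(\cdot)^{2}$ becomes $\frac14$ times the \emph{variance} of $\sum_{i}a_{i}f_{i}$ under $P$. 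Writing $\va=(a_{1},\ldots,a_{k})'$, this variance is precisely $\va' S\va$, since $S$ is the covariance matrix of the vector $\vgam_{n}$, whose entries are $\sqrt{n}(P_{n}-P)f_{i}$, and hence $S_{ij}=\mathrm{Cov}_{P}(f_{i},f_{j})$. Thus after eliminating $a_{0}$ we are left with
\[
\chi^{2}(\Omega,P)=\sup_{\va\in\Real^{k}}\;\va'\vnu-\tfrac{1}{4}\va'S\va.
\]

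\medskip

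The remaining step is the maximization of the concave quadratic $\va'\vnu-\frac14\va'S\va$. Provided $S$ is invertible (which holds when the functions $f_{1},\ldots,f_{k}$ are linearly independent in $L_{2}(P)$ modulo constants, the natural nondegeneracy hypothesis), setting the gradient $\vnu-\frac12 S\va$ to zero gives the maximizer $\va^{*}=2S^{-1}\vnu$, and plugging back yields the value $\va^{*}{}'\vnu-\frac14\va^{*}{}'S\va^{*}=2\vnu'S^{-1}\vnu-\vnu'S^{-1}\vnu=\vnu'S^{-1}\vnu$, which is exactly (ii). For (i), the identical argument applied to the empirical formula for $\chi^{2}_{n}$ replaces $P$ by $P_{n}$ throughout, turning $\vnu$ into $\vnu_{n}$ and $S$ into $S_{n}$, giving $\chi_{n}^{2}=\vnu_{n}'S_{n}^{-1}\vnu_{n}$.

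\medskip

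I expect the main obstacle to be the bookkeeping around the intercept $a_{0}$ and the resulting centering, which is what converts the raw second moment $P(\cdot)^{2}$ into the covariance matrix $S$ rather than an uncentered Gram matrix; one must check that the optimal $a_{0}$ indeed produces exactly $\mathrm{Var}_{P}$ and not something involving $Q_{0}$. A secondary point worth making explicit is the invertibility of $S$ (and $S_{n}$), which is needed for the closed-form maximizer to exist and be unique; this corresponds to the uniqueness of the projection already guaranteed by the convexity of $\Omega\cap\Lambda_{n}$ noted before the statement. Both of these are routine once the quadratic structure is set up, so the essential content of the proposition is simply the evaluation of a finite-dimensional concave quadratic program.
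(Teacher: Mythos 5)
Your proposal is correct and follows essentially the same route as the paper: both eliminate the intercept $a_{0}$ via its first-order condition (which centers the combination and turns the second moment into the variance $\underline{a}'S\underline{a}$), then solve the resulting concave quadratic program by setting the gradient to zero, giving $\underline{a}^{*}=2S^{-1}\underline{\nu}$ and the value $\underline{\nu}'S^{-1}\underline{\nu}$. The only cosmetic differences are that the paper proves the empirical case (i) first and evaluates the optimum through the identity $(Q_{0}-P_{n})f_{n}^{*}=\frac{1}{2}P_{n}(f_{n}^{*})^{2}$ rather than by direct substitution, while you make the invertibility (linear independence) hypothesis explicit.
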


\begin{proof}

\begin{enumerate}
\item[(i)] Differentiating the function in (\ref{linear-chi_n}) with respect
to $a_{s}$, $s=0,1,\ldots,k$ yields
\begin{equation}
a_{0}=-\sum_{i=1}^{k}a_{i}P_{n}f_{i}\label{a0}%
\end{equation}
for $s=0$, while for $s>0$%
\begin{equation}
\left(  Q_{0}-P_{n}\right)  f_{s}=\frac{1}{2}\left(  a_{0}P_{n}f_{s}%
+\sum_{i=1}^{k}a_{i}P_{n}f_{i}f_{s}\right)  .\label{eqn:3.a_s}%
\end{equation}
Substituting $\left(  \ref{a0}\right)  $ in the last display,
\[
\left(  Q_{0}-P_{n}\right)  f_{s}=\frac{1}{2}\sum_{i=1}^{k}a_{i}\left(
P_{n}f_{i}f_{s}-P_{n}f_{s}P_{n}f_{i}\right)  ,
\]
i.e.
\begin{equation}
2\underline{\nu}_{n}=S_{n}\underline{a}\label{a_matrix}%
\end{equation}
where $\underline{a}^{\prime}=\left\{  a_{1},a_{2},\ldots,a_{k}\right\}  .$

Set $f_{n}^{*}=\arg\max_{< \mathcal{F}> }(Q_{0}-P_{n})f-\frac{1}{4}P_{n}f^{2}.
$ For every $h\in<\mathcal{F}> $, $\left(  Q_{0}-P_{n}\right)  h-\frac{1}%
{2}P_{n}hf_{n}^{*}=0$ . Set $h:=f_{n}^{*}$ to obtain $\left(  Q_{0}%
-P_{n}\right)  f_{n}^{*}=\frac{1}{2}P_{n}(f_{n}^{*})^{2}.$

It then follows, using $\left(  \ref{a0}\right)  $ e $\left(  \ref{a_matrix}%
\right)  ,$%
\begin{align*}
\chi_{n}^{2} &  =\left[  \left(  Q_{0}-P_{n}\right)  f_{n}^{*}-\frac{1}%
{4}P_{n}(f_{n}^{*})^{2}\right]  =\frac{1}{4}P_{n}(f_{n}^{*})^{2}=\\
&  =\frac{1}{4}P_{n}\left(  \sum_{i=1}^{k}a_{i}f_{i}-\sum_{i=1}^{k}a_{i}%
P_{n}f_{i}\right)  ^{2}=\\
&  =\frac{1}{4}\underline{a}^{\prime}S_{n}\underline{a}=\underline{\nu}%
_{n}^{\prime}S_{n}^{-1}\underline{\nu}_{n}.
\end{align*}

\item[(ii)] The proof is similar to the above one.
\end{enumerate}
\end{proof}

\subsubsection{Almost sure convergence}

Call an envelope for $\mathcal{F\ }$a function $F$ such that $\left\vert
f\right\vert \leq F$ for all $f$ in $\mathcal{F}$.

\begin{thm}
\label{th:qc2} Assume that $\chi^{2}\left(  \Omega,P\right)  $ is finite. Let
$\mathcal{F}$ be a finite class of functions as in (\ref{eqn:3.omega_linear})
with an envelope function $F$ such that $PF^{2}<\infty$.\newline Then
$\left\vert \chi_{n}^{2}-\chi^{2}\left(  \Omega,P\right)  \right\vert
\rightarrow0$, $P-a.s.$
\end{thm}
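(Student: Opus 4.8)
The plan is to read the conclusion straight off the matrix representation in Proposition \ref{th:matrixform}, which expresses both quantities as quadratic forms, $\chi_{n}^{2}=\underline{\nu}_{n}'S_{n}^{-1}\underline{\nu}_{n}$ and $\chi^{2}(\Omega,P)=\underline{\nu}'S^{-1}\underline{\nu}$. Since $\mathcal{F}$ is a finite class, the entire problem sits in a fixed finite-dimensional space, so it suffices to show that the data $(\underline{\nu}_{n},S_{n})$ converge almost surely to $(\underline{\nu},S)$ and then to invoke continuity of the map $(\underline{\nu},S)\mapsto\underline{\nu}'S^{-1}\underline{\nu}$ at a nonsingular $S$. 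No uniform (Glivenko--Cantelli) control over $\mathcal{F}$ is needed, precisely because $\mathcal{F}$ is finite.

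First I would establish convergence of the ingredients. The entries of $\underline{\nu}_{n}$ are $(Q_{0}-P_{n})f_{i}=Q_{0}f_{i}-P_{n}f_{i}$ with $Q_{0}$ fixed, and the entries of $S_{n}$ are $P_{n}f_{i}f_{j}-(P_{n}f_{i})(P_{n}f_{j})$. The envelope hypothesis $PF^{2}<\infty$ furnishes exactly the integrability required: since $P$ is a probability measure, $P|f_{i}|\le PF\le(PF^{2})^{1/2}<\infty$ and $P|f_{i}f_{j}|\le PF^{2}<\infty$. Hence the strong law of large numbers applies to each of the finitely many averages $P_{n}f_{i}$ and $P_{n}f_{i}f_{j}$, giving $P_{n}f_{i}\to Pf_{i}$ and $P_{n}f_{i}f_{j}\to Pf_{i}f_{j}$ almost surely. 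Intersecting the finitely many almost sure events, on a single set of probability one we obtain $\underline{\nu}_{n}\to\underline{\nu}$ and $S_{n}\to S$ entrywise.

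It then remains to pass from convergence of $(\underline{\nu}_{n},S_{n})$ to convergence of the quadratic forms, and this is the one place where care is needed. The representation in Proposition \ref{th:matrixform} already presupposes that $S$ is nonsingular; equivalently, no nontrivial linear combination $\sum_{i}c_{i}f_{i}$ is $P$-a.s. constant, so that the constraints are genuinely $k$-dimensional. Granting this, matrix inversion is continuous at $S$: on the almost sure event $\det S_{n}\to\det S\neq0$, so $S_{n}$ is invertible for all large $n$ (depending on the sample path) and $S_{n}^{-1}\to S^{-1}$. The map $(\underline{\nu},S)\mapsto\underline{\nu}'S^{-1}\underline{\nu}$ is therefore continuous at $(\underline{\nu},S)$, whence $\chi_{n}^{2}=\underline{\nu}_{n}'S_{n}^{-1}\underline{\nu}_{n}\to\underline{\nu}'S^{-1}\underline{\nu}=\chi^{2}(\Omega,P)$ almost surely, which is the assertion.

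The main obstacle is exactly this invertibility issue: one must know that the limiting covariance matrix $S$ is nonsingular, so that inversion is continuous there and $S_{n}^{-1}$ is eventually well defined along almost every sample path. Once that is granted---as it is implicitly through Proposition \ref{th:matrixform}, the finiteness of $\chi^{2}(\Omega,P)$ guaranteeing that the resulting quadratic form is a finite number---the rest is the finite-dimensional strong law combined with continuity of a rational function of the matrix entries, and no further approximation or smoothing is required.
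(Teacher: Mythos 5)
Your proof is correct, and it starts from the same place as the paper---Proposition \ref{th:matrixform} together with the strong law of large numbers applied to the finitely many entries of $\underline{\nu}_{n}$ and $S_{n}$, with the envelope condition $PF^{2}<\infty$ supplying the needed integrability---but the final step is handled by a genuinely different device. Where you invoke soft continuity of $(\underline{\nu},S)\mapsto\underline{\nu}'S^{-1}\underline{\nu}$ at a nonsingular $S$ (via $\det S_{n}\to\det S\neq 0$, hence eventual invertibility of $S_{n}$ and $S_{n}^{-1}\to S^{-1}$ by Cramer's rule), the paper instead splits the error into $\underline{\nu}_{n}'(S_{n}^{-1}-S^{-1})\underline{\nu}_{n}$ plus $\underline{\nu}_{n}'S^{-1}\underline{\nu}_{n}-\underline{\nu}'S^{-1}\underline{\nu}$ and controls the first term quantitatively: it expands $S_{n}^{-1}$ in a Neumann series around $S$ and bounds the operator norm of $S^{1/2}S_{n}^{-1}S^{1/2}-I$ by a quantity of order $\lambda_{1}^{-1}\,k\,\sup_{i,j}|s_{n}(i,j)-s(i,j)|$, with $\lambda_{1}$ the smallest eigenvalue of $S$. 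For the present theorem, with $k$ fixed, your route is shorter and loses nothing. What the paper's heavier machinery buys is reuse: the same quantitative estimate is cited verbatim in the proof of Theorem \ref{th:dlf} (to kill the correction term under $H0$) and is the backbone of Theorem \ref{th:dli}, where $k=k(n)\to\infty$ and the explicit dependence on $k$ and on $\lambda_{1,k}$ is exactly what conditions (\ref{B}) and (\ref{D}) are calibrated against; a soft continuity argument cannot be made uniform in a growing dimension. One caveat applies to both proofs equally: nonsingularity of $S$ is nowhere proved, only presupposed (the paper makes linear independence of $\mathcal{F}$ explicit only in Theorem \ref{th:dlf}), and note that finiteness of $\chi^{2}(\Omega,P)$ does not by itself deliver invertibility of $S$, so that clause of your closing remark is an additional implicit hypothesis rather than a consequence of the stated assumptions.
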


\begin{proof}
From Proposition \ref{th:matrixform},
\begin{align*}
\left\vert \chi_{n}^{2}-\chi^{2}\left(  \Omega,P\right)  \right\vert  &
=\left\vert \underline{\nu}_{n}^{\prime}S_{n}^{-1}\underline{\nu}%
_{n}-\underline{\nu}S^{-1}\underline{\nu}\right\vert \\
&  =\left\vert \underline{\nu}_{n}^{\prime}\left(  S_{n}^{-1}-S^{-1}\right)
\underline{\nu}_{n}\right\vert +\left\vert \underline{\nu}_{n}^{\prime}%
S^{-1}\underline{\nu}_{n}-\underline{\nu}^{\prime}S^{-1}\underline{\nu
}\right\vert .
\end{align*}

For $\underline{x}$ in $\mathbb{R}^{k}$ denote $\left\Vert \underline
{x}\right\Vert $ the euclidean norm. Over the space of matrices $k\times k$
introduce the algebraic norm $\left\vert \!\left\vert \!\left\vert
A\right\vert \!\right\vert \!\right\vert =\sup_{\left\Vert \underline
{x}\right\Vert \leq1}\frac{\left\Vert A\underline{x}\right\Vert }{\left\Vert
\underline{x}\right\Vert }=\sup_{\left\Vert \underline{x}\right\Vert
=1}\left\Vert A\underline{x}\right\Vert $. All entries of $A$ satisfy
$\left\vert a\left(  i,j\right)  \right\vert \leq\left\vert \!\left\vert
\!\left\vert A\right\vert \!\right\vert \!\right\vert .$ Moreover, if
$\left\vert \lambda_{1}\right\vert \leq\left\vert \lambda_{2}\right\vert
\leq\ldots\leq\left\vert \lambda_{k}\right\vert $ are the eigenvalues of $A$,
$\left\vert \!\left\vert \!\left\vert A\right\vert \!\right\vert \!\right\vert
=\left\vert \lambda_{k}\right\vert .$ Observe further that, if for all
$\left(  i,j\right)  ,$ $\left\vert a\left(  i,j\right)  \right\vert
\leq\varepsilon,$ then, for any $\underline{x}\in\mathbb{R}^{k},$ such that
$\left\Vert \underline{x}\right\Vert =1,$ $\left\Vert A\underline
{x}\right\Vert ^{2}=\sum_{i=1}^{k}\left(  \sum_{j}a\left(  i,j\right)
x_{j}\right)  ^{2}\leq\sum_{i}\sum_{j}a\left(  i,j\right)  ^{2}\left\Vert
\underline{x}\right\Vert ^{2}\leq k^{2}\varepsilon^{2}$, i.e. $\left\vert
\!\left\vert \!\left\vert A\right\vert \!\right\vert \!\right\vert \leq
k\varepsilon.$

For the first term in the RHS of the above display
\begin{align*}
A & := {\underline{\nu}_{n}}^{\prime}\left(  S_{n}^{-1}-S^{-1}\right)
\underline{\nu}_{n} ={\underline{\nu}_{n}}^{\prime}S^{-1/2}\left(
S^{1/2}S_{n}^{-1}S^{1/2}-I\right)  S^{-1/2}\underline{\nu}_{n}\\
& \leq\left\Vert {\underline{\nu}_{n}}^{\prime}S^{-1/2}\right\Vert \left\vert
\!\left\vert \!\left\vert S^{1/2}S_{n}^{-1}S^{1/2}-I\right\vert \!\right\vert
\!\right\vert \leq cost.\:k\:\left\vert \!\left\vert \!\left\vert S^{1/2}%
S_{n}^{-1}S^{1/2}-I\right\vert \!\right\vert \!\right\vert .
\end{align*}
Hence if $B:=\left\vert \!\left\vert \!\left\vert S^{1/2}S_{n}^{-1}%
S^{1/2}-I\right\vert \!\right\vert \!\right\vert $ tends to $0$ a.s., so does
$A$.

First note that
\begin{align*}
S_{n}^{-1} &  =\left(  S+S_{n}-S\right)  ^{-1}=S^{-1/2}\left(  I+S^{-1/2}%
\left(  S_{n}-S\right)  S^{-1/2}\right)  ^{-1}S^{-1/2}=\\
&  =S^{-1/2}\left[  I+\sum_{h=1}^{\infty}\left(  S^{-1/2}\left(
S-S_{n}\right)  S^{-1/2}\right)  ^{h}\right]  S^{-1/2}.
\end{align*}
Hence
\[
S^{1/2}S_{n}^{-1}S^{1/2}-I=\sum_{h=1}^{\infty}\left(  S^{-1/2}\left(
S-S_{n}\right)  S^{-1/2}\right)  ^{h},
\]
which entails
\begin{align*}
\left\vert \!\left\vert \!\left\vert S^{1/2}S_{n}^{-1}S^{1/2}-I\right\vert
\!\right\vert \!\right\vert  &  =\left\vert \!\left\vert \!\left\vert
\sum_{h=1}^{\infty}\left(  S^{-1/2}\left(  S-S_{n}\right)  S^{-1/2}\right)
^{h}\right\vert \!\right\vert \!\right\vert \leq\sum_{h=1}^{\infty}\left\vert
\!\left\vert \!\left\vert S-S_{n}\right\vert \!\right\vert \!\right\vert
^{h}\left\vert \!\left\vert \!\left\vert S^{-1/2}\right\vert \!\right\vert
\!\right\vert ^{2h}\\
&  =O_{P}\left(  \lambda_{1}^{-1}k\ \sup_{i,j}\left\vert s_{n}\left(
i,j\right)  -s\left(  i,j\right)  \right\vert \right)  ,
\end{align*}
where $\lambda_{1}$ is the smallest eigenvalue of $S.$

Since
\begin{equation}
C:=\sup_{i,j}\left|  s_{n}\left(  i,j\right)  -s\left(  i,j\right)  \right|
\leq\sup_{i,j}\left|  \left(  P_{n}-P\right)  f_{i}f_{j}\right|  +\sup
_{i}\left|  \left(  P_{n}-P\right)  f_{i}\right|  \left|  \left(
P_{n}+P\right)  F\right| \label{technical 2}%
\end{equation}
the LLN implies that $C$ tends to 0 a.s. which in turn implies that $B$ tends
to 0.

Now consider the second term. $\left|  {\underline{\nu}_{n}}^{\prime}%
S^{-1}\underline{\nu}_{n}-{\underline{\nu}}^{\prime}S^{-1}\underline{\nu
}\right|  =\left|  \left( \underline{\nu}_{n}+\underline{\nu}\right) ^{\prime
}S^{-1} \left( n^{-1/2}\underline{\gamma}_{n}\right)  \right| $ tends to 0 by LLN.
\end{proof}

\subsubsection{Asymptotic distribution of the test statistic}

Write
\[
n\chi_{n}^{2}=\sqrt{n}\underline{\nu}_{n}^{\prime}S^{-1}\sqrt{n}\underline
{\nu}_{n}+\sqrt{n}\underline{\nu}_{n}^{\prime}\left(  S_{n}^{-1}%
-S^{-1}\right)  \sqrt{n}\underline{\nu}_{n}.
\]

We then have

\begin{thm}
\label{th:dlf} Let $\Omega$ be defined by $\left(  \ref{eqn:3.omega_linear}%
\right)  $ and $\mathcal{F}$ be a finite class of linearly independent
functions with envelope function $F$ such that $PF^{2}<\infty$. Set
$k=card\{\mathcal{F}\}$. Then, under $H0$,
\[
n\chi_{n}^{2}\overset{d}{\longrightarrow}chi\left(  k\right)
\]
where $chi\left(  k\right)  $ denotes a chi-square distribution with $k$
degrees of freedom.
\end{thm}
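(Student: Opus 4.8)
The plan is to start from the decomposition of $n\chi_n^2$ already written just before the statement,
\[
n\chi_n^2=\sqrt n\,\underline{\nu}_n^{\prime}S^{-1}\sqrt n\,\underline{\nu}_n+\sqrt n\,\underline{\nu}_n^{\prime}\left(S_n^{-1}-S^{-1}\right)\sqrt n\,\underline{\nu}_n,
\]
and to show that the first term converges to a $\chi^2(k)$ while the second is $o_P(1)$. Under $H0$ we have $P\in\Omega$, so $Pf_i=Q_0 f_i=a_i$ for every $i$; hence $\underline{\nu}=\underline{\nu}(\mathcal F,P)=0$ and, by the definition in (\ref{nugamma}), $\sqrt n\,\underline{\nu}_n=\sqrt n(\underline{\nu}_n-\underline{\nu})=-\underline{\gamma}_n$. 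Thus under the null the first term is exactly $\underline{\gamma}_n^{\prime}S^{-1}\underline{\gamma}_n$.

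Next I would invoke the multivariate central limit theorem. The vector $\underline{\gamma}_n=\sqrt n\,(P_n-P)(f_1,\ldots,f_k)^{\prime}$ is a normalized sum of i.i.d.\ mean-zero random vectors with common covariance $S$ (this is the $S$ defined just before Proposition \ref{th:matrixform}); the moment condition $PF^2<\infty$ on the envelope guarantees that each $f_i$ is square-integrable, so $S$ is finite and the CLT applies, giving $\underline{\gamma}_n\overset{d}{\longrightarrow}N(0,S)$. Linear independence of $\mathcal F$ ensures $S$ is nonsingular, so $S^{-1/2}\underline{\gamma}_n\overset{d}{\longrightarrow}N(0,I_k)$, and by the continuous mapping theorem the quadratic form $\underline{\gamma}_n^{\prime}S^{-1}\underline{\gamma}_n=\norm{S^{-1/2}\underline{\gamma}_n}^2$ converges in distribution to a sum of $k$ independent squared standard normals, i.e.\ to $chi(k)$.

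It remains to control the second term $\sqrt n\,\underline{\nu}_n^{\prime}(S_n^{-1}-S^{-1})\sqrt n\,\underline{\nu}_n$, which I expect to be the main technical point. The idea is to reuse the estimates already established in the proof of Theorem \ref{th:qc2}: writing the term as $\underline{\gamma}_n^{\prime}S^{-1/2}\bigl(S^{1/2}S_n^{-1}S^{1/2}-I\bigr)S^{-1/2}\underline{\gamma}_n$, I can bound it by $\norm{S^{-1/2}\underline{\gamma}_n}^2\,\LVertt S^{1/2}S_n^{-1}S^{1/2}-I\RVertt$. The first factor is $O_P(1)$ by the CLT just applied, while the second factor was shown in the proof of Theorem \ref{th:qc2} to be $O_P\bigl(\lambda_1^{-1}k\,\sup_{i,j}\abs{s_n(i,j)-s(i,j)}\bigr)$, and the quantity $C=\sup_{i,j}\abs{s_n(i,j)-s(i,j)}$ tends to $0$ a.s.\ by the LLN via (\ref{technical 2}). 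Hence the matrix factor is $o_P(1)$, the whole second term is $o_P(1)$, and Slutsky's theorem yields $n\chi_n^2\overset{d}{\longrightarrow}chi(k)$. The delicate bookkeeping is making sure that under $H0$ the recentering $\sqrt n\,\underline{\nu}_n=-\underline{\gamma}_n$ is genuinely $O_P(1)$ (which it is, precisely because $P\in\Omega$ forces $\underline{\nu}=0$), so that the $o_P(1)$ rate of the matrix perturbation is not inflated by a diverging quadratic-form factor.
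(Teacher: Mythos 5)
Your proposal is correct and follows essentially the same route as the paper's proof: the same decomposition of $n\chi_{n}^{2}$, the identification $\sqrt{n}\,\underline{\nu}_{n}=\pm\underline{\gamma}_{n}$ under $H0$ (your sign $-\underline{\gamma}_{n}$ is in fact the one consistent with (\ref{nugamma}), and is immaterial for the quadratic form), the multivariate CLT plus continuous mapping for the leading term, and negligibility of $\sqrt{n}\,\underline{\nu}_{n}^{\prime}(S_{n}^{-1}-S^{-1})\sqrt{n}\,\underline{\nu}_{n}$ via the matrix-perturbation bound and (\ref{technical 2}). If anything, you are more explicit than the paper, which leaves the CLT step and the nonsingularity of $S$ (from linear independence) implicit.
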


\begin{proof}
For $P$ in $\Omega$, $\sqrt{n}\underline{\nu}_{n}=\underline{\gamma}_{n}$.
Therefore $n\chi_{n}^{2}={\underline{\gamma}_{n}}^{\prime}S^{-1}%
\underline{\gamma}_{n}+\sqrt{n}{\underline{\nu}_{n}}^{\prime}\left(
S_{n}^{-1}-S^{-1}\right)  \sqrt{n}\underline{\nu}_{n}.$

By continuity of the mapping $h\left(  \underline{y}\right)  =\underline
{y}^{\prime}S^{-1}\underline{y}$ , $\underline{\gamma}_{n}^{\prime}%
S^{-1}\underline{\gamma}_{n}$ has a limiting $chi(k)$ distribution.

It remains to prove that the second term is negligible. Indeed again from
\[
\left(  \sqrt{n}\underline{\nu}_{n}\right)  ^{\prime}\left(  S_{n}^{-1}%
-S^{-1}\right)  \left(  \sqrt{n}\underline{\nu}_{n}\right)  \leq
cst.\:k\:\left\vert \!\left\vert \!\left\vert S^{1/2}S_{n}^{-1}S^{1/2}%
-I\right\vert \!\right\vert \!\right\vert
\]
it is enough to show that $\left\vert \!\left\vert \!\left\vert S^{1/2}%
S_{n}^{-1}S^{1/2}-I\right\vert \!\right\vert \!\right\vert $ is $o_{P}\left(
1\right)  .$ This follows from (\ref{technical 2}).
\end{proof}

The asymptotic behavior of $\chi_{n}^{2}$ under $H1$ is captured by
\begin{align}
\label{consistencyH1}\sqrt{n}\left( \chi_{n}^{2}-\chi^{2}\right)  & =
-2\underline{\gamma}_{n}^{\prime}S^{-1}\underline{\nu}+\sqrt{n}\underline{\nu
}^{\prime}S^{-1/2}\left( S^{1/2}S_{n}^{-1}S^{1/2}-I\right) S^{-1/2}%
\underline{\nu}\nonumber\\
&  -2\underline{\gamma}_{n}^{\prime}S^{-1/2}\left( S^{1/2}S_{n}^{-1}%
S^{1/2}-I\right) S^{-1/2}\underline{\nu}+n^{-1/2}\underline{\gamma}%
_{n}^{\prime}S^{-1}_{n}\underline{\gamma}_{n}.
\end{align}
This proves that the test based on $n\chi_{n}^{2}$ is asymptotically consistent.

\subsection{Infinite number of linear constraints, an approach by sieves}

In various cases $\Omega$ is defined through a countable collection of linear
constraints. An example is presented in Section 3.3. Suppose thus that
$\Omega$ is defined as in $\left(  \ref{eqn:3.omega_linear}\right)  $, with
$\mathcal{F}$ an infinite class of functions
\[
\mathcal{F}=\left\{  f_{\alpha}:\mathbb{R}^{d}\rightarrow\mathbb{R}%
,\ \ \alpha\in A\right\}
\]
where \textsl{A} $\subseteq\mathbb{R}$ is a countable set of indices and
$card\left(  \mathcal{F}\right)  =card\left(  A\right)  =\infty.$ Thus
$\Omega=\left\{ Q \in M: \,Qf=Q_{0}f, \; f\in\mathcal{F}\right\} $, for some
$Q_{0} $ in $M$.

Assume that the projection $Q^{\ast}$ exists in $\Omega.$ Then, by Theorem
\ref{th:2.linear_char}
\[
f^{\ast}\in cl_{L_{1}\left(  Q^{\ast}\right)  }\left( <\mathcal{F}> \right)  .
\]

We approximate $\mathcal{F}$ through a suitable increasing sequence of classes
of functions $\mathcal{F}_{n}$ , with finite cardinality $k=k(n)$ increasing
with $n.$ Each $\mathcal{F}_{n}$ induces a subset $\Omega_{n}$ included in
$\Omega$.

Define therefore $\left\{  \mathcal{F}_{n}\right\} _{n\geq1}$\ such that
\begin{align}
\mathcal{F}_{n}  & \subseteq\mathcal{F}_{n+1}\subset\mathcal{F},\,\text{for
\ all \ \ }n\geq1\label{effe}\\
\quad\mathcal{F}  & =\bigcup\limits_{n\geq1}\mathcal{F}_{n}\label{union}%
\end{align}
and
\[
\Omega_{n}=\left\{  Q:Qf=Q_{0}f,\ \ f\in\mathcal{F}_{n}\right\}  .
\]
We thus have $\Omega_{n}\supseteq\Omega_{n+1},\,n\geq1$ and $\Omega
=\bigcap\limits_{n\geq1}\Omega_{n}.$

The idea of determining the projection of a measure $P$ on a set $\Omega$
through an approximating sequence of sets -or sieve- \ has been introduced in
this setting in ~\cite{TV93}.

\begin{thm}
[Teboulle-Vajda, 1993]With the above notation, define $Q_{n}^{\ast}$ as the
projection of $P$ on $\Omega_{n}.$ Suppose that the above assumptions on
$\left\{  \Omega_{n}\right\}  _{n\geq1}$ hold and that $\Omega_{n}%
\supseteq\Omega$\ for each $n\geq1.$ Then
\begin{equation}
\lim_{n\rightarrow\infty} \left\|  f^{\ast}-f_{n}^{\ast}\right\|
_{L_{1}\left(  P\right)  }=\lim_{n\rightarrow\infty}\left\|  \frac{dQ^{\ast}%
}{dP}-\frac{dQ_{n}^{\ast}}{dP}\right\|  _{L_{1}\left(  P\right)  }.\label{vaj}%
\end{equation}

\end{thm}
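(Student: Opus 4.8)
The plan is to recast the whole problem inside the Hilbert space $L_{2}(P)$, where both $\chi^{2}$-projections become ordinary orthogonal projections, and then to invoke the classical fact that projections onto a decreasing sequence of closed affine subspaces converge to the projection onto their intersection.

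First I would identify each $Q$ in $\mathcal{D}$ with its density $q=dQ/dP$, so that $\chi^{2}(Q,P)=\norm{q-1}_{L_{2}(P)}^{2}$. Under the standing moment hypothesis $\int\varphi^{2+\eps}\,dP<\infty$ every $f\in\mathcal{F}$ lies in $L_{2}(P)$, so each constraint $\int f\,dQ=Q_{0}f$ reads $\langle f,q\rangle_{L_{2}(P)}=Q_{0}f$ and defines a closed hyperplane. Hence $\Omega$ and $\Omega_{n}$ correspond to closed affine subspaces $V:=\{q:\langle f,q\rangle=Q_{0}f,\ f\in\mathcal{F}\}$ and $V_{n}:=\{q:\langle f,q\rangle=Q_{0}f,\ f\in\mathcal{F}_{n}\}$ of $L_{2}(P)$. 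Since $\mathcal{F}_{n}\subseteq\mathcal{F}_{n+1}$ and $\mathcal{F}=\bigcup_{n}\mathcal{F}_{n}$, these obey $V_{n}\supseteq V_{n+1}$ and $\bigcap_{n}V_{n}=V$, mirroring the nesting of the $\Omega_{n}$. The projections are $q^{\ast}=P_{V}(1)$ and $q_{n}^{\ast}=P_{V_{n}}(1)$; as $1\in L_{2}(P)$ and the $V_{n}$ are closed and convex, these exist and are unique by the Hilbert projection theorem (no separate existence hypothesis on $\Omega_{n}$ is needed), and $d_{n}:=\norm{1-q_{n}^{\ast}}_{L_{2}(P)}=\sqrt{\chi^{2}(\Omega_{n},P)}$ is nondecreasing (smaller feasible sets) and bounded above by $d_{\infty}:=\norm{1-q^{\ast}}_{L_{2}(P)}=\sqrt{\chi^{2}(\Omega,P)}<\infty$, because $V\subseteq V_{n}$.

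The heart of the argument is to show that $q_{n}^{\ast}\to q^{\ast}$ in $L_{2}(P)$. For $m\geq n$ one has $q_{m}^{\ast}\in V_{m}\subseteq V_{n}$, so the variational inequality for the projection onto the closed convex set $V_{n}$ gives
\begin{equation}
\norm{q_{m}^{\ast}-q_{n}^{\ast}}_{L_{2}(P)}^{2}\leq\norm{1-q_{m}^{\ast}}_{L_{2}(P)}^{2}-\norm{1-q_{n}^{\ast}}_{L_{2}(P)}^{2}=d_{m}^{2}-d_{n}^{2}.
\end{equation}
Since $d_{n}^{2}$ increases to a finite limit, the right-hand side tends to $0$, so $(q_{n}^{\ast})$ is Cauchy and converges to some $q_{\infty}\in L_{2}(P)$. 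Because $q_{m}^{\ast}\in V_{n}$ for all $m\geq n$ and $V_{n}$ is closed, letting $m\to\infty$ yields $q_{\infty}\in V_{n}$ for every $n$, hence $q_{\infty}\in\bigcap_{n}V_{n}=V$. Moreover $\norm{1-q_{\infty}}_{L_{2}(P)}=\lim_{n}d_{n}\leq d_{\infty}$, whereas $q_{\infty}\in V$ forces $\norm{1-q_{\infty}}_{L_{2}(P)}\geq d_{\infty}$; by uniqueness of the projection onto $V$ we conclude $q_{\infty}=P_{V}(1)=q^{\ast}$.

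Finally, since $P$ is a probability measure, $L_{2}(P)$-convergence implies $L_{1}(P)$-convergence, so $\norm{q^{\ast}-q_{n}^{\ast}}_{L_{1}(P)}\to0$. Recalling the duality relation $f^{\ast}=2(q^{\ast}-1)$ and its analogue $f_{n}^{\ast}=2(q_{n}^{\ast}-1)$ for the projection on $\Omega_{n}$, we obtain $\norm{f^{\ast}-f_{n}^{\ast}}_{L_{1}(P)}=2\norm{q^{\ast}-q_{n}^{\ast}}_{L_{1}(P)}\to0$, so both limits in (\ref{vaj}) equal $0$, as claimed. The one delicate point is the identification step: one must verify that the monotone limit of the distances $d_{n}$ actually attains $d_{\infty}=d(1,V)$ rather than falling short of it. This is precisely what the closedness of each $V_{n}$ together with $\bigcap_{n}V_{n}=V$ secures, and it is where the nesting hypotheses $\Omega_{n}\supseteq\Omega_{n+1}\supseteq\Omega$ are indispensable.
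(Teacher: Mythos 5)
Your proof is correct, but it cannot be compared step-by-step with ``the paper's proof'' for a simple reason: the paper offers none. The theorem is imported verbatim from Teboulle and Vajda (1993) (the reference \cite{TV93}), where it is established for general $\phi$-divergences by convex-analytic arguments; the present paper only quotes the conclusion. Your route is a self-contained, elementary alternative that exploits what is special about the $\chi^{2}$ case: since $\chi^{2}(Q,P)=\|dQ/dP-1\|_{L_{2}(P)}^{2}$, projection onto the linearly constrained sets $\Omega_{n}$, $\Omega$ becomes orthogonal projection of the constant function $1$ onto nested closed affine subspaces $V_{n}\supseteq V_{n+1}\supseteq V=\bigcap_{n}V_{n}$ of $L_{2}(P)$. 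Your key inequality $\|q_{m}^{\ast}-q_{n}^{\ast}\|^{2}\leq d_{m}^{2}-d_{n}^{2}$ for $m\geq n$ is the standard variational inequality for projections onto closed convex sets and is applied correctly; monotone bounded $d_{n}$ then makes $(q_{n}^{\ast})$ Cauchy, closedness of each $V_{n}$ places the limit in $V$, and the squeeze $d_{\infty}\leq\|1-q_{\infty}\|=\lim_{n}d_{n}\leq d_{\infty}$ identifies it with $q^{\ast}$ by uniqueness. Two remarks. First, your reading of (\ref{vaj}) is the right one: as written it equates two limits that differ by a factor of $2$ (since $f^{\ast}-f_{n}^{\ast}=2(q^{\ast}-q_{n}^{\ast})$), so its actual content is that both vanish, which is exactly how the paper uses it (via Scheff\'e's lemma to conclude $d_{var}(Q_{n}^{\ast},Q^{\ast})\rightarrow0$). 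Second, your argument buys strictly more than the quoted statement: it delivers $L_{2}(P)$-convergence of the densities, existence and uniqueness of each $Q_{n}^{\ast}$ for free, and, because $\lim_{n}d_{n}=d_{\infty}$, the convergence $\chi^{2}(\Omega_{n},P)\rightarrow\chi^{2}(\Omega,P)$ of display (\ref{sieve}) without the boundedness hypothesis $\sup_{f\in\mathcal{F}}\sup_{x}f(x)<\infty$ that the paper invokes to pass from (\ref{vaj}) to (\ref{sieve}). The only hypotheses you should state explicitly are that every $f\in\mathcal{F}$ lies in $L_{2}(P)$, so that each constraint is a closed hyperplane (guaranteed by the paper's moment or envelope conditions), and that $V\neq\emptyset$, i.e. $\chi^{2}(\Omega,P)<\infty$, which is implicit in the paper's standing assumption that the projection $Q^{\ast}$ exists.
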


By Scheffe's Lemma this is equivalent to $\lim_{n\rightarrow\infty}%
d_{var}(Q_{n}^{\ast},Q^{\ast})=0$ where $d_{var}(Q,P):=\sup_{A\in
\mathcal{B}(\mathbb{R}^{d})}\left\vert Q(A)-P(A)\right\vert $ is the variation
distance between the p.m's $P$ and $Q.$ When $\sup_{f\in\mathcal{F}}\sup
_{x}f(x)<\infty$ then (\ref{vaj}) implies%

\begin{equation}
\lim_{k\rightarrow\infty}\chi^{2}\left(  \Omega_{n},P\right)  =\chi^{2}\left(
\Omega,P\right) \label{sieve}%
\end{equation}

The above result states that we can build a sequence of estimators of
$\chi^{2}\left(  \Omega,P\right)  $ letting $k=k(n)$ grow to infinity together
with $n.$ Define
\[
\chi_{n,k}^{2}=\sup_{f\in\mathcal{F}_{n}}\left(  Q_{0}-P_{n}\right)
f-\frac{1}{4}P_{n}f^{2}.
\]
In the following section we consider conditions on $k(n)$ entailing the
asymptotic normality of the suitably normalized sequence of estimates
$\chi_{n,k}$ when $P$ belongs to $\Omega$, i.e. under $H0$.

\subsubsection{Convergence in distribution under $H0$.}

As a consequence of Theorem $\ref{th:dlf}$, $n\chi_{n,k}^{2}$ tends to
infinity with probability 1 as $n$ $\rightarrow\infty.$

We consider the statistics
\begin{equation}
\frac{n\chi_{n,k}^{2}-k}{\sqrt{2k}}\label{chi-standard}%
\end{equation}
which will be seen to have a nondegenerate distribution as $k(n)$ tends to
infinity together with $n.$

As in ~\cite{IKL93} and ~\cite{IL90}, the main tool of the proof of the
asymptotic normality of (\ref{chi-standard}) relies on the strong
approximation of the empirical processes. We briefly recall some useful notions.

\begin{defn}
A class of functions $\mathcal{F}$ is \emph{pregaussian} if there exists a
version $B_{P}^{0}\left(  .\right)  $ of $P-$Brownian bridges uniformly
continuous in $\ell^{\infty}\left(  \mathcal{F}\right)  $, with respect to the
metric $\rho_{P}\left(  f,g\right)  =\left(  Var_{P}\left|  f-g\right|
\right)  ^{1/2}$, where $\ell^{\infty}\left(  \mathcal{F}\right)  $ is the
Banach space of all functionals $H:\mathcal{F}\rightarrow\mathbb{R}%
$\ uniformly bounded and with norm $\left\|  H\right\|  _{\mathcal{F}}%
=\sup_{f\in\mathcal{F}}\left|  H\left(  f\right)  \right|  .$
\end{defn}

For some $a>0$, let $\delta_{n}$ be a decreasing sequence with $\delta
_{n}=o\left(  n^{-a}\right)  $.

\begin{defn}
A class of functions $\mathcal{F}$ is \emph{Koml\'os-Major-Tusn\'ady}
(\emph{KMT}) with respect to $P$, with rate $\delta_{n}$\ $\left(
\mathcal{F}\in KMT\left(  \delta_{n};P\right)  \right)  $\ iff it is
pregaussian and there exists a version $B_{n}^{0}\left(  .\right)  $ of
$P-$Brownian bridges such that for any $t>0$ it holds
\begin{equation}
\Pr\left\{  \sup_{f\in\mathcal{F}}\left\vert \sqrt{n}\left(  P_{n}-P\right)
f-B_{n}^{0}\left(  f\right)  \right\vert \geq\delta_{n}\left(  t+b\log
n\right)  \right\}  \leq ce^{-\theta t},\label{KMT}%
\end{equation}
where the positive constants $b$, $c$ and $\theta$ depend on $\mathcal{F}$ only.
\end{defn}

We refer to \cite{BOR81}, \cite{Mas89}, \cite{BM89}, and \cite{Kol94} for
examples of classical and useful classes of KMT classes, together with
calculations of rates; we will use the fact that a KMT class is also a Donsker class.

From $\left(  \ref{KMT}\right)  $ and Borel-Cantelli lemma it follows that
\begin{equation}
\sup_{f \in\mathcal{F}}\left|  \gamma_{n}(f)-B_{n}^{0}(f)\right| =O\left(
\delta_{n}\log n\right) \label{KMT2}%
\end{equation}
a.s. where, with the same notation as in the finite case (see (\ref{nugamma}%
)), $\gamma_{n}(f)=\sqrt{n}(P_{n}-P)f$ is the empirical process indexed by $f
\in\mathcal{F}$.

Let $\left\{  \mathcal{F}_{n}\right\} _{n\geq1}$ be a sequence of classes of
linearly independent functions satisfying $\left(  \ref{effe}\right)  $.

For any $n$, set $\underline{\gamma}_{n,k}=\underline{\gamma}_{n}%
(\mathcal{F}_{n})$ (resp. $\underline{B}_{n,k}^{0}$) the $k-$dimensional
vector resulting from the projection of the empirical process $\gamma_{n}$
(resp. of the $P-$Brownian bridge $B_{n}^{0}$) defined on $\mathcal{F}$ to the
subset $\mathcal{F}_{n}$. Then, if $\mathcal{F}_{n}=\left\{ f_{1}^{(n)}%
,\ldots,f_{k}^{(n)}\right\} $, $\underline{\gamma}_{n,k}=\left\{ \gamma
_{n}(f_{1}^{(n)}),\ldots,\gamma_{n}(f_{k}^{(n)})\right\} $ and $\underline
{B}^{0}_{n,k}=\left\{ B_{n}^{0}\left( f_{1}^{(n)}\right) ,\ldots,B_{n}%
^{0}\left( f_{k}^{(n)}\right) \right\} $. Denote $S_{k}$ the covariance matrix
of the vector $\underline{\gamma}_{n,k}$ and $S_{n,k}$ its empirical
covariance matrix. Let $\lambda_{1,k}$ be the smallest eigenvalue of $S_{k}$.

\begin{thm}
\label{th:dli}Let $\mathcal{F}$ have an envelope $F$ and be $KMT\left(
\delta_{n};P\right)  $\ for some sequence $\delta_{n}\downarrow0.$ Define
further a sequence $\left\{  \mathcal{F}_{n}\right\}  _{n\geq1}$ of classes of
linearly independent functions satisfying $\left(  \ref{effe}\right)  $.

Moreover, let $k$ satisfy
\begin{align}
\lim_{n\rightarrow\infty}k(n) &  =\infty\nonumber\\
\lim_{n\rightarrow\infty}\lambda_{1,k}^{-1/2}\,k^{1/2}\delta_{n}\log n &
=0\label{B}\\
\lim_{n\rightarrow\infty}\lambda_{1,k}^{-1}\,k^{3/2}n^{-1/2} &  =0.\label{D}%
\end{align}
Then under $H0$
\[
\frac{n\chi_{n,k}^{2}-k}{\sqrt{2k}}\overset{d}{\longrightarrow}N\left(
0,1\right)  .
\]

\end{thm}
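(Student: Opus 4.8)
The plan is to reduce the problem to a central limit theorem for a quadratic form in a growing-dimensional Gaussian vector, using the KMT strong approximation to replace the empirical process by a Brownian bridge and then controlling all the error terms produced by the estimated covariance matrix. By Proposition \ref{th:matrixform}(i) applied to the finite class $\mathcal{F}_n$, we have $n\chi_{n,k}^2 = \underline{\gamma}_{n,k}^{\prime} S_{n,k}^{-1}\underline{\gamma}_{n,k}$ under $H0$ (since $\sqrt{n}\,\vnun = \vgam_{n,k}$ when $P\in\Omega$, exactly as in the proof of Theorem \ref{th:dlf}). First I would split this as
\[
n\chi_{n,k}^2 = \underline{\gamma}_{n,k}^{\prime} S_{k}^{-1}\underline{\gamma}_{n,k} + \underline{\gamma}_{n,k}^{\prime}\left(S_{n,k}^{-1}-S_{k}^{-1}\right)\underline{\gamma}_{n,k},
\]
and treat the leading term and the remainder separately.

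For the leading term, the strategy is to replace $\vgam_{n,k}$ by its Gaussian analogue $\vB_{n,k}^0$. By the KMT hypothesis and (\ref{KMT2}) the sup-norm error between the two vectors is $O(\delta_n\log n)$ a.s.; writing $\underline{\gamma}_{n,k}^{\prime}S_k^{-1}\underline{\gamma}_{n,k} - (\vB_{n,k}^0)^{\prime}S_k^{-1}\vB_{n,k}^0$ as a sum of cross terms and bounding each via $\LVertt S_k^{-1}\RVertt \le \lambda_{1,k}^{-1}$, the error is controlled by products of $\|\vB_{n,k}^0\|$ (which is $O_P(\sqrt{k})$) and $\sqrt{k}\,\delta_n\log n$; after dividing by $\sqrt{2k}$ condition (\ref{B}) forces this to vanish in probability. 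Now $(\vB_{n,k}^0)^{\prime}S_k^{-1}\vB_{n,k}^0$ is an \emph{exact} chi-square with $k$ degrees of freedom, since $\vB_{n,k}^0$ is centered Gaussian with covariance $S_k$; hence $\bigl(n\chi_{n,k}^2-k\bigr)/\sqrt{2k}$ inherits, via the classical CLT for standardized $\chi^2_k$ variables as $k\to\infty$, the limiting $N(0,1)$ law.

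The remainder term is where the real work lies, and I expect it to be the \textbf{main obstacle}. Factoring as in the proof of Theorem \ref{th:qc2},
\[
\underline{\gamma}_{n,k}^{\prime}\left(S_{n,k}^{-1}-S_k^{-1}\right)\underline{\gamma}_{n,k} \le \|\vgam_{n,k}^{\prime}S_k^{-1/2}\|^2\,\LVertt S_k^{1/2}S_{n,k}^{-1}S_k^{1/2}-I\RVertt,
\]
where $\|\vgam_{n,k}^{\prime}S_k^{-1/2}\|^2 = O_P(k)$. Mimicking the Neumann-series bound from Theorem \ref{th:qc2} gives
\[
\LVertt S_k^{1/2}S_{n,k}^{-1}S_k^{1/2}-I\RVertt = O_P\!\left(\lambda_{1,k}^{-1}\,k\,\sup_{i,j}\left|s_{n,k}(i,j)-s_k(i,j)\right|\right),
\]
and the delicate point is to show that the maximal entrywise fluctuation $\sup_{i,j}|s_{n,k}(i,j)-s_k(i,j)|$ is $O_P(n^{-1/2})$ \emph{uniformly over the growing array of $k^2$ entries}; this needs the envelope condition $PF^2<\infty$ together with the KMT/Donsker control on the class $\{f_i f_j\}$, not merely the pointwise LLN used in the finite case. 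Granting this, the remainder is $O_P(k\cdot \lambda_{1,k}^{-1}k\cdot n^{-1/2}) = O_P(\lambda_{1,k}^{-1}k^2 n^{-1/2})$; dividing by $\sqrt{2k}$ yields $O_P(\lambda_{1,k}^{-1}k^{3/2}n^{-1/2})$, which is exactly $o_P(1)$ by condition (\ref{D}). Combining the two parts by Slutsky's theorem completes the proof. The care required in making the uniform covariance estimate hold with the stated rate — balancing the dimension growth $k$, the spectral lower bound $\lambda_{1,k}$, and the sample size $n$ — is precisely what conditions (\ref{B}) and (\ref{D}) are engineered to handle.
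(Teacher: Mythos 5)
Your strategy is essentially the paper's own: reduce to the quadratic form $\underline{\gamma}_{n,k}^{\prime}S_{n,k}^{-1}\underline{\gamma}_{n,k}$ (valid under $H0$ since $\sqrt{n}\,\underline{\nu}_{n}=\underline{\gamma}_{n,k}$), couple $\underline{\gamma}_{n,k}$ with the Brownian bridge vector $\underline{B}_{n,k}^{0}$ via KMT, obtain the $N(0,1)$ limit from the exact $\chi^{2}_{k}$ law of $(\underline{B}_{n,k}^{0})^{\prime}S_{k}^{-1}\underline{B}_{n,k}^{0}$ together with the classical CLT, and absorb the covariance-estimation error by the Neumann-series bound of Theorem \ref{th:qc2}. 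Your treatment of the remainder $\underline{\gamma}_{n,k}^{\prime}(S_{n,k}^{-1}-S_{k}^{-1})\underline{\gamma}_{n,k}$ — including $\|\underline{\gamma}_{n,k}^{\prime}S_{k}^{-1/2}\|^{2}=O_{P}(k)$, the uniform entrywise rate $O_{P}(n^{-1/2})$, and the final rate $\lambda_{1,k}^{-1}k^{3/2}n^{-1/2}$ matched to (\ref{D}) — coincides with the paper's term $D$ (the paper glosses the uniformity of the covariance fluctuations at the same level you do, via the envelope and the Donsker property of KMT classes).

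There is, however, a genuine gap in your control of the coupling error, i.e.\ the cross term $2(\underline{B}_{n,k}^{0})^{\prime}S_{k}^{-1}e$ with $e:=\underline{\gamma}_{n,k}-\underline{B}_{n,k}^{0}$. You bound it by $\LVertt S_{k}^{-1}\RVertt\,\|\underline{B}_{n,k}^{0}\|\,\|e\|\leq\lambda_{1,k}^{-1}\cdot O_{P}(\sqrt{k})\cdot\sqrt{k}\,O(\delta_{n}\log n)$, which after division by $\sqrt{2k}$ is $O_{P}\left(\lambda_{1,k}^{-1}k^{1/2}\delta_{n}\log n\right)$. Condition (\ref{B}) only controls $\lambda_{1,k}^{-1/2}k^{1/2}\delta_{n}\log n$, so your bound loses a factor $\lambda_{1,k}^{-1/2}$, which is unbounded exactly in the intended regime where $\lambda_{1,k}\rightarrow 0$; for instance, in the paper's marginal-distribution application one has $\lambda_{1,k}$ of order $k^{-2}$, so your argument would require $k^{5/2}\delta_{n}\log n\rightarrow 0$ instead of the stated $k^{3/2}\delta_{n}\log n\rightarrow 0$, and the refined theorem for $d=2$ would no longer follow. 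The fix is the one the paper uses: split $S_{k}^{-1}=S_{k}^{-1/2}S_{k}^{-1/2}$ and apply Cauchy--Schwarz as
\begin{equation*}
\left|(\underline{B}_{n,k}^{0})^{\prime}S_{k}^{-1}e\right|\leq\left\Vert S_{k}^{-1/2}\underline{B}_{n,k}^{0}\right\Vert \left\Vert S_{k}^{-1/2}e\right\Vert ,
\end{equation*}
where the first factor is $O_{P}(\sqrt{k})$ with \emph{no} spectral loss, because $S_{k}^{-1/2}\underline{B}_{n,k}^{0}$ is a standard Gaussian vector and its squared norm is exactly $\chi_{k}^{2}$; only the second factor pays $\lambda_{1,k}^{-1/2}\|e\|\leq\lambda_{1,k}^{-1/2}\sqrt{k}\,O(\delta_{n}\log n)$. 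This gives $O_{P}\left(\lambda_{1,k}^{-1/2}k^{1/2}\delta_{n}\log n\right)=o_{P}(1)$ under (\ref{B}), as required. (Your purely quadratic error $e^{\prime}S_{k}^{-1}e$ is harmless even with the crude operator-norm bound, since $\lambda_{1,k}^{-1}k^{1/2}(\delta_{n}\log n)^{2}\leq\left(\lambda_{1,k}^{-1/2}k^{1/2}\delta_{n}\log n\right)^{2}\rightarrow 0$; the problem is confined to the cross term.)
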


\begin{proof}
By Proposition \ref{th:matrixform}
\begin{align*}
\frac{n\chi^{2}_{n,k}-k}{\sqrt{2k}} &  =\frac{\left(  \underline{B}_{n,k}%
^{0}\right)  ^{\prime}S_{k}^{-1}\underline{B}_{n,k}^{0}-k}{\sqrt{2k}}+2\left(
2k\right)  ^{-1/2}\left(  \underline{B}_{n,k}^{0}\right)  ^{\prime}S_{k}%
^{-1}\left(  \underline{\gamma}_{n,k}-\underline{B}_{n,k}^{0}\right) \\
&  +\left(  2k\right)  ^{-1/2}\left(  \underline{\gamma}_{n,k}-\underline
{B}_{n,k}^{0}\right)  ^{\prime}S_{k}^{-1}\left(  \underline{\gamma}%
_{n,k}-\underline{B}_{n,k}^{0}\right) \\
&  +\left(  2k\right)  ^{-1/2}{\underline{\gamma}_{n,k}}^{\prime}\left(
S_{n,k}^{-1}-S_{k}^{-1}\right)  \underline{\gamma}_{n,k}\\
&  =A+B+C+D.
\end{align*}
The first term above can be written
\[
A=\frac{\left(  \underline{B}_{n,k}^{0}\right)  ^{\prime}S_{k}^{-1}%
\underline{B}_{n,k}^{0}-k}{\sqrt{2k}}=\frac{\sum\limits_{i=1}^{k}\left(
Z_{i}^{2}-EZ_{i}^{2}\right)  }{\sqrt{kVarZ_{i}^{2}}}%
\]
which converges weakly to the standard normal distribution by the CLT applied
to the i.i.d.standard normal r.v's $Z_{i}.$

As to the term $C$ it is straightforward that $C=o(B)$. From the proof of
Theorem \ref{th:dlf}, D goes to zero if $\lambda_{1,k}^{-1}k^{1/2} \left(
\sup_{i,j}\left\vert s_{n,k}\left(  i,j\right)  -s_{k}\left(  i,j\right)
\right\vert \right)  \left\Vert \underline{\gamma}_{n,k}^{\prime}S_{k}%
^{-1/2}\right\Vert ^{2}=o_{P}(1).$ Since, using (\ref{D}) and (\ref{effe}),
$\sup_{i,j}\left\vert s_{n,k}\left(  i,j\right)  -s_{k}\left(  i,j\right)
\right\vert \leq\sup_{f,g\in\mathcal{F}}\left\vert \left(  P_{n}-P\right)
fg\right\vert $

$+\sup_{f\in\mathcal{F}}\left\vert \left(  P_{n}-P\right)  f\right\vert
\left\vert \left(  P_{n}+P\right)  F\right\vert $ $=O_{P}\left(
n^{-1/2}\right)  $, and considering that $\left\Vert \underline{\gamma}%
_{n,k}^{\prime}S_{k}^{-1/2}\right\Vert ^{2}=O_{P}(k)$, we are done.

For B, $\left\vert \left(  \underline{B}_{n,k}^{0}\right)  ^{\prime}S_{k}%
^{-1}\left(  \underline{\gamma}_{n,k}-\underline{B}_{n,k}^{0}\right)
\right\vert =\left\vert \left(  \underline{B}_{n,k}^{0}\right)  ^{\prime}%
S_{k}^{-1/2}S_{k}^{-1/2}\left(  \underline{\gamma}_{n,k}-\underline{B}%
_{n,k}^{0}\right)  \right\vert $

$\leq\left\Vert \left(  \underline{B}_{n,k}^{0}\right)  ^{\prime}S_{k}%
^{-1/2}\right\Vert \left\Vert S_{k}^{-1/2}\left(  \underline{\gamma}%
_{n,k}-\underline{B}_{n,k}^{0}\right)  \right\Vert $ $=\sqrt{\sum_{i=1}%
^{k}Z_{i}^{2}}\left\Vert S_{k}^{-1/2}\left(  \underline{\gamma}_{n,k}%
-\underline{B}_{n,k}^{0}\right)  \right\Vert $ where, as used in A, $Z_{i}%
^{2}$\ are i.i.d. with a $\chi^{2}$ distribution with 1 df. Hence $\sqrt
{\sum_{i=1}^{k}Z_{i}^{2}}=O_{P}\left(  k^{1/2}\right)  .$ Further
\begin{align*}
\left\Vert S_{k}^{-1/2}\left(  \underline{\gamma}_{n,k}-\underline{B}%
_{n,k}^{0}\right)  \right\Vert  & \leq\left| \!\left| \!\left|  S_{k}%
^{-1/2}\right| \!\right| \!\right|  \cdot\left\Vert \underline{\gamma}%
_{n,k}-\underline{B}_{n,k}^{0} \right\Vert \\
& \leq\lambda_{1,k}^{-1/2}\,k^{1/2}\sqrt{\frac{1}{k}\sum_{i=1}^{k}\left(
\underline{\gamma}_{n,k}(f_{i})-\underline{B}_{n,k}^{0}(f_{i})\right)  ^{2}}\\
& \leq\lambda_{1,k}^{-1/2}\,k^{1/2}\sup_{f \in\mathcal{F}}\left|  \gamma
_{n}(f)-B_{n}^{0}(f)\right|
\end{align*}
from which $B=O_{P}\left(  \lambda_{1,k}^{-1/2}\,k^{1/2}\delta_{n}\log
n\right)  =o_{P}\left(  1\right)  $ if (\ref{B}) holds. We have used the fact
that $P$ belongs to $\Omega$ in the last evaluation of $B$.
\end{proof}

\begin{rem}
\label{rem:consistency} Under $H1$, using the relation $\underline{\nu}%
_{n}=\underline{\nu}-n^{-1/2}\underline{\gamma}_{n,k}$, we can write
\begin{align*}
\frac{n\chi^{2}_{n,k}-k}{\sqrt{2k}} & =(2k)^{-1/2}\left( \underline{\gamma
}_{n,k}^{\prime}S_{n,k}^{-1}\underline{\gamma}_{n,k}-k\right) +(2k)^{-1/2}%
\left( n\underline{\nu}^{\prime}S_{n,k}^{-1}\underline{\nu}-2\sqrt
{n}\underline{\gamma}_{n,k}^{\prime}S_{n,k}^{-1}\underline{\nu}\right) \\
& =(2k)^{-1/2}\left( n\underline{\nu}^{\prime}S_{n,k}^{-1}\underline{\nu
}-2\sqrt{n}\underline{\gamma}_{n,k}^{\prime}S_{n,k}^{-1}\underline{\nu}\right)
+O_{P}(1)
\end{align*}
where the $O_{P}(1)$ term captures $(2k)^{-1/2}\left( \underline{\gamma}%
_{n,k}^{\prime}S_{n,k}^{-1}\underline{\gamma}_{n,k}-k\right) $ that coincides
with the test statistic $\frac{n\chi^{2}_{n,k}-k}{\sqrt{2k}}$ under $H0$. We
can bound the first term from below by
\begin{align*}
& (2k)^{-1/2}n\underline{\nu}^{\prime}S_{k}^{-1}\underline{\nu}\left(
1-O_{P}\left( \left| \!\left| \!\left|  S_{k}^{1/2}S_{n,k}^{-1}S_{k}%
^{1/2}-I\right| \!\right| \!\right| \right) \right) \\
& \quad- (2n/k)^{1/2}\left\| \underline{\gamma}_{n,k}^{\prime}S_{k}%
^{-1/2}\right\| \left\| \underline{\nu}^{\prime}S_{k}^{-1/2}\right\|  \left(
1+O_{P}\left( \left| \!\left| \!\left|  S_{k}^{1/2}S_{n,k}^{-1}S_{k}%
^{1/2}-I\right| \!\right| \!\right| \right) \right) \\
& = O_{P}\left(  nk^{-1/2} \right) -O_{P}(n^{1/2}).
\end{align*}
Hence, if (\ref{B}) and (\ref{D}) are satisfied then the test statistic is
asymptotically consistent also for the case of an infinite number of linear constraints.
\end{rem}

In both conditions (\ref{B}) and (\ref{D}) the value of $\lambda_{1,k}$
appears, which cannot be estimated without any further hypothesis on the
structure of the class $\mathcal{F}$. However, for concrete problems, once
defined $\mathcal{F}$ it is possible to give bounds for $\lambda_{1}$,
depending on $k$. This is what will be shown in the last section, for a
particular class of goodness of fit tests.

\subsection{Application: testing marginal distributions}

\label{sec:example}

Let $P$ be an unknown distribution on $\mathbb{R}^{d}$ with density bounded by
below. We consider goodness-of-fit tests for the marginal distributions
$P_{1},...,P_{d}$ of $P$ on the basis of an i.i.d. sample $(X_{1},...,X_{n}).
$

Let thus $Q_{1}^{0},\ldots,Q_{d}^{0}$ denote $d$ distributions on $\mathbb{R}%
$. The null hypothesis writes $H0:$ $P_{j}=Q_{j}^{0}$ for $j=1,...,d.$ That is
to say that we simultaneously test goodness-of-fit of the marginal laws
$P_{1},\ldots,P_{d}$ to the laws $Q_{1}^{0},\ldots,Q_{d}^{0}.$ Through the
transform $P^{\prime}(y_{1},\ldots,y_{d})=P\left(  \left(  Q_{1}^{0}\right)
^{-1}\left(  y_{1}\right)  ,\ldots,\left(  Q_{d}^{0}\right)  ^{-1}\left(
y_{d}\right)  \right)  $ we can restrict the analysis to the case when all
p.m's have support $[0,1]^{d}$ and marginal laws uniform in $[0,1]$ under H0.
So without loss of generality we write $Q_{0}$ for the uniform distribution on
$[0,1]$.

P.J. Bickel, Y. Ritov and J.A. Wellner \cite{BRW91} focused on the estimation
of linear functionals of the probability measure subject to the knowledge of
the marginal laws in the case of r.v.'s with a.c. distribution, letting the
number of cells grow to infinity.

Define\ the class
\[
\mathcal{F}:=\left\{  {\LARGE 1}_{u,j}:\left[  0,1\right]  ^{d}\longrightarrow
\left\{  0,1\right\}  ,\text{ }j=1,\ldots,d\text{, \ }u\in\lbrack0,1]\right\}
\]
where ${\LARGE 1}_{u,j}\left(  x_{1},\ldots,x_{d}\right)  =\left\{
\begin{array}
[c]{c}%
1\ ,\quad x_{j}\leq u\\
0\ ,\quad x_{j}>u
\end{array}
\right.  $.

Let $\Omega$ be the set of all p.m's on $\left[  0,1\right]  ^{d}$ with
uniform marginals, i.e
\begin{equation}
\Omega=\left\{  Q\in M_{1}\left(  \left[  0,1\right]  ^{d}\right)  \text{
\ \ such that \ }Qf=\int_{\left[  0,1\right]  ^{d}}f(x)dx\,,\ \ f\in
\mathcal{F}\right\}  .\label{Omega marge}%
\end{equation}

This set $\Omega$ has the form of $\left(  \ref{eqn:3.omega_linear}\right)  ,$
where $\mathcal{F}$ is the class of characteristic functions of intervals,
which is a KMT class with rate $\delta_{n}=n^{-1/2d}$ ($\delta_{n}=\sqrt{n} $
if $d=2$) ; see \cite{BM89}.

We now build the family $\mathcal{F}_{n}$ satisfying (\ref{effe}) and
(\ref{union}).

Let $m=m(n)$ tend to $+\infty$ with $n$. Let $0< u_{1}< \ldots< u_{m}< 1$ and
$\left\{ \mathcal{U}^{(n)}\right\} $ be the $m\cdot d$ points in $[0,1]^{d}$
with coordinates in $\{u_{1},\ldots,u_{m}\}$.

Let $\mathcal{F}_{n}$ denote the class of characteristic functions of the
$d-$dimensional rectangles $[\underline{0},\underline{u}]$ for $\underline
{u}\in\mathcal{U}^{(n)}$. Hence $card\{\mathcal{F}_{n}\}=k=m\cdot d$.

Namely,
\begin{equation}
\mathcal{F}_{n}=\left\{  {\LARGE 1}_{u_{i},j}:\left[  0,1\right]
^{d}\rightarrow\left\{  0,1\right\}  ,\text{ }j=1,\ldots,d\text{, \ }u_{i}%
\in\left(  0,1\right)  ,\,u_{i}<u_{i+1}\,,\ i=1,\ldots,m\right\}
,\label{effe_k}%
\end{equation}
which satisfies $\mathcal{F}_{n}\subseteq\mathcal{F}_{n+1}$ for all $n\geq1 $
(i.e. (\ref{effe})) and $\mathcal{F}=\bigcup_{n\geq1}\mathcal{F}_{n}$ (i.e.
(\ref{union}))$.$

The sequence $\left\{  \mathcal{F}_{n}\right\}  _{n\geq1}$ and the class
$\mathcal{F}$ satisfy conditions of Theorem $\ref{th:dli}$: $\mathcal{F}$ (and
consequently each $\mathcal{F}_{n}$) has envelope function $F=1$ and
$RF^{h}=1,$ for all $R$ in $\ M_{1}\left(  \left[  0,1\right]  ^{d}\right)  $
and $h$ in $\mathbb{R}$.

In order to establish a lower bound for $\lambda_{1,k},$ the smallest
eigenvalue of $S_{k},$ we will impose that the volumes of the cells in the
grid defined by the $u_{i}^{(n)}$ do not shrink too rapidly to 0. Suppose that
the intervals $(u_{i},u_{i+1}]$ are such that
\begin{equation}
0<\lim_{n\rightarrow\infty}\inf\min_{i=1,...,m-1}k\left(  u_{i+1}%
-u_{i}\right)  \leq\lim_{n\rightarrow\infty}\sup\max_{i=1,...,m-1}k\left(
u_{i+1}-u_{i}\right)  <\infty.\label{grid}%
\end{equation}

\begin{rem}
Condition for the sequence $\mathcal{F}_{n}$ to converge to $\mathcal{F}$
coincides with (F2) and (F3) in ~\cite{BRW91}.
\end{rem}

We first obtain an estimate for the eigenvalue $\lambda_{1,k}.$ The final
result of this step is stated in Lemma \ref{lm:eigen2} below.

Let $P$ belong to $\Omega.$ Let us then write the matrix $S_{k}$. We have
$P{\LARGE 1}_{u_{i},j}=Q_{0}{\LARGE 1}{}_{u_{i},j}=u_{i}$ for $i=1,...,m$ and
$j=1,...,d$ . Set $P{\LARGE 1}_{u_{i},j}{\LARGE 1}_{u_{l},h}=P\left(
X_{j}\leq u_{i},\,X_{h}\leq u_{l}\right)  $, for every $h,j=1,...,d$ and
$l,i=1,\ldots,m$. When $j=h$ then $P{\LARGE 1}_{u_{i},j}{\LARGE 1}_{u_{l},j}
=P\left(  X_{j}\leq u_{i}\wedge u_{l}\right)  =u_{i}\wedge u_{l}.$

Consider for the vector of functions $f_{j}$ the following ordering
\[
\left(  f_{1},\ldots,f_{m},f_{m+1},\ldots,f_{2m},\ldots,f_{\left(  d-1\right)
m+1},\ldots,f_{dm},\right)  =\left(  {\LARGE 1}_{u_{1},1}\ldots,{\LARGE 1}%
_{u_{m},1},{\LARGE 1}_{u_{1},2},\ldots,{\LARGE 1}_{u_{m},d}\right)  .
\]
The generic term of $S_{k}$ writes
\begin{align*}
s_{k}\left(  u,v\right)   &  =s_{k}\left(  (j-1)m+i,(h-1)m+l\right)
=P{\LARGE 1}_{u_{i},j}{\LARGE 1}_{u_{l},h}-P{\LARGE 1}_{u_{i},j}%
P{\LARGE 1}_{u_{l},h}\\
&  =\left\{
\begin{array}
[c]{ll}%
u_{i}-u_{i}^{2}\text{ \ \ }, & \text{ \ if \ }j=h,\,i=l\\
P(X_{j}\leq u_{i}\wedge u_{l})-u_{i}u_{l}\text{ \ \ }, & \text{ \ if
\ }j=h,\,i\neq l\\
P(X_{j}\leq u_{i},X_{h}\leq u_{l})-u_{i}u_{l}\left(  i,l\right)  -p_{i}p_{l} &
\text{ \ if \ }j\neq h
\end{array}
\right.
\end{align*}

We make use of the class of functions
\begin{align*}
\mathcal{F}_{n}^{\delta}  & =\left\{  f_{j\cdot i}-f_{j(i-1)},\;i=1,\ldots
,m,\;j=1\ldots,d,\;\;f_{h}\in\mathcal{F}_{n},\;f_{0}=0\right\} \\
& =\left\{  {\LARGE 1}_{A_{i}^{j}},\;i=1\ldots,m,\;j=1,\ldots,d\right\}  .
\end{align*}

In the above display the $\{A_{i}^{j}\}_{i=1,\ldots,m}$ describe the partition
of $[0,1],$ the support of the marginal distribution $P_{j}$, induced by the
vector $\left\{  u_{i}\right\}  _{i\leq m}$. Namely we have for every
$j=1,\ldots,d$, $A_{i}^{j}\cap A_{l}^{j}=\emptyset$, $i\not =l$, $\cup
_{i=1}^{m+1}A_{i}^{j}=[0,1]$, with $A_{m+1}^{j}=[u_{m},1],$ for all $j$.

Set $S_{k}^{\delta}$ the covariance matrix of the vector $\underline{\gamma
}_{n}^{\delta}=\underline{\gamma}_{n}(\mathcal{F}^{\delta})$ and consider the
vectors $\underline{\nu}^{\delta}$ and $\underline{\nu}_{n}^{\delta}$ defined
as in (\ref{nugamma}).

$S_{k}^{\delta}$ has $((j-1)m+i,(h-1)m+l)-$th component equal to $P_{A_{i}%
^{j}}{}_{A_{l}^{h}}-P_{A_{i}^{j}}P_{A_{l}^{h}}$ , which is%
\[
\left\{
\begin{array}
[c]{ll}%
p_{i}-p_{i}^{2}, & \text{ \ if \ }j=h,\,i=l\\
-p_{i}p_{l}, & \text{ \ if \ }j=h,\,i\neq l\\
P(u_{i-1}\leq X_{j}<u_{i},u_{l-1}\leq X_{h}<u_{l})-p_{i}p_{l}, & \text{ \ if
\ }j\neq h
\end{array}
\right.
\]
where we have written $p_{i}=P(u_{i-1}\leq X_{j}<u_{i})=u_{i}-u_{i-1}$, for
all $j=1,\ldots,d$.

$\chi^{2}$ (and $\chi_{n}^{2}$) can be written using $\mathcal{F}_{n}^{\delta
}$ instead of $\mathcal{F}_{n}$:
\[
\chi^{2}(\Omega,P)=\underline{\nu}^{\prime}S_{k}^{-1}\underline{\nu}=\left(
\underline{\nu}^{\delta}\right)  ^{\prime}\left(  S_{k}^{\delta}\right)
^{-1}\left(  \underline{\nu}^{\delta}\right)  .
\]

Let $M$ be the diagonal $d-$block matrix with all diagonal blocks equal to the
unit inferior triangular $(m\times m)$ matrix. Then $\underline{\nu
}=M\underline{\nu}^{\delta}$.

On the other hand, after some algebra it can be checked that $S_{k}=M
S_{k}^{\delta}M^{\prime}$.

Thus ${\underline{\nu}^{\delta}}^{\prime}{(S_{k}^{\delta})}^{-1}\underline
{\nu}^{\delta}={\underline{\nu}}^{\prime}(M^{\prime})^{-1}M^{\prime}S_{k}%
^{-1}M(M)^{-1}\underline{\nu}=\chi^{2}$. Similar arguments yield $\chi_{n}%
^{2}={\underline{\nu}_{n}^{\delta}}^{\prime}(S_{n,k}^{\delta})^{-1}%
\underline{\nu}_{n}^{\delta}$.

The matrix $M$ has all eigenvalues equal to one. This allows us to write, for
$\lambda_{1,\delta}$ the minimum eigenvalue of $S_{k}^{\delta}$:
\[
\lambda_{1,k}\leq\min_{x}\frac{x^{\prime}S_{k}x}{\Vert x\Vert^{2}}\leq\min
_{y}\frac{y^{\prime}S_{k}^{\delta}y}{\Vert y\Vert^{2}}\max_{x}\frac{\Vert
Mx\Vert^{2}}{\Vert x\Vert^{2}}=\lambda_{1,\delta}\leq\min_{y}\frac{y^{\prime
}S_{k}y}{\Vert y\Vert^{2}}\max_{x}\frac{\Vert M^{-1}x\Vert^{2}}{\Vert
x\Vert^{2}}=\lambda_{1,k}.
\]

We will now consider the covariance matrix of $\underline{\gamma}_{n}^{\delta
}$ under H0, when the underlying distribution is $Q_{0}$, i.e. the uniform
distribution on $[0,1]^{d}$. Denote this matrix $S_{k}^{0}$. We then have

\begin{lem}
\label{th:4.eigen1} If $P\in\Omega$, then

(i) $S_{k}^{0}=D^{1/2}(I-V)D^{1/2},$ where $D$ and $V$ are both diagonal block
matrices with diagonal blocks equal to $diag\left\{ p_{i}\right\}
_{i=1,\ldots,m} $ and to $U=\left\{ \sqrt{p_{i} p_{l}}\right\} _{i=1,\ldots
,m,\, l=1,\ldots,m}$ respectively.

(ii) The $(m\times m)$ matrix $U$ has eigenvalues equal to
\[
\lambda_{U} = \left\{
\begin{array}
[c]{ll}%
(1-p_{m+1})=\sum_{i=1}^{m} p_{i} & \mbox{ with cardinality }1\\
0 & \mbox{ with cardinality }m-1.
\end{array}
\right.
\]
Moreover $(I-U)^{-1}=(I+\frac{1}{p_{m+1}}U)$.

(iii) For any eigenvalue $\lambda$ of $S_{k}^{0}$ it holds
\[
p_{m+1}\min_{1\leq i\leq m}p_{i}\leq\lambda\leq\max_{1\leq i\leq m}p_{i}.
\]

\end{lem}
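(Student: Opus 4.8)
The plan is to treat the three parts as a progression from an explicit matrix decomposition (i), through a spectral analysis of the building block $U$ (ii), to the eigenvalue bounds for the full matrix $S_k^0$ (iii).

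For part (i), I would start from the formula for the generic entry of $S_k^\delta$ specialized to the uniform distribution $Q_0$ on $[0,1]^d$. Under H0 the increments satisfy $p_i = u_i - u_{i-1}$, and because the marginals are uniform and the coordinates of $Q_0$ are independent, the cross terms factor: for $j \neq h$ we have $Q_0(u_{i-1} \leq X_j < u_i,\, u_{l-1} \leq X_h < u_l) = p_i p_l$, so the off-diagonal blocks of $S_k^0$ become $-p_i p_l = -\sqrt{p_i}\sqrt{p_l}\cdot\sqrt{p_i}\sqrt{p_l}$, while the diagonal blocks are $\mathrm{diag}\{p_i\} - \{p_i p_l\}$. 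Factoring $\sqrt{p_i}$ out on each side is exactly the statement $S_k^0 = D^{1/2}(I-V)D^{1/2}$ with $V$ block-diagonal with blocks $U = \{\sqrt{p_i p_l}\}$. The main thing to verify carefully here is that the independence of the marginals under $Q_0$ makes every off-diagonal block ($j\neq h$) collapse to the rank-one form $-\sqrt{p_i p_l}$ after the $D^{1/2}$ factoring, matching the diagonal-block structure so that a single $U$ works for all blocks.

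For part (ii), the key observation is that $U = ww'$ is a rank-one matrix, where $w = (\sqrt{p_1},\ldots,\sqrt{p_m})'$. Hence $U$ has a single nonzero eigenvalue $w'w = \sum_{i=1}^m p_i = 1 - p_{m+1}$ (using $\sum_{i=1}^{m+1} p_i = 1$) with eigenvector $w$, and eigenvalue $0$ with multiplicity $m-1$ on $w^\perp$. The inverse formula $(I-U)^{-1} = I + \frac{1}{p_{m+1}}U$ then follows either from the Sherman--Morrison formula applied to $I - ww'$, or directly by checking $(I-U)(I+\frac{1}{p_{m+1}}U) = I$ using $U^2 = (w'w)U = (1-p_{m+1})U$ and simplifying the coefficient of $U$.

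For part (iii), I would bound the eigenvalues of $S_k^0 = D^{1/2}(I-V)D^{1/2}$ via the Rayleigh quotient. Since $V$ is block-diagonal with each block $U=ww'$ having eigenvalues in $\{0, 1-p_{m+1}\}$, the symmetric matrix $I-V$ has eigenvalues in $[p_{m+1}, 1]$, so $p_{m+1}\,x'x \leq x'(I-V)x \leq x'x$. Substituting $x = D^{1/2}y$ and using $\min_i p_i \cdot \|y\|^2 \leq y'Dy \leq \max_i p_i \cdot \|y\|^2$ gives the two-sided estimate $p_{m+1}\min_i p_i \leq \lambda \leq \max_i p_i$ for every eigenvalue $\lambda$ of $S_k^0$. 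The one delicate point is the interaction between the $D^{1/2}$ conjugation and the $(I-V)$ bound: the lower bound uses that $I-V$ is bounded below by $p_{m+1}I$ as a quadratic form, so $y'D^{1/2}(I-V)D^{1/2}y \geq p_{m+1}\,y'Dy \geq p_{m+1}\min_i p_i\,\|y\|^2$, and symmetrically for the upper bound; I expect this Rayleigh-quotient sandwiching to be the part requiring the most care to state cleanly, since it combines two separate spectral bounds rather than diagonalizing $S_k^0$ directly.
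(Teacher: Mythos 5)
Parts (ii) and (iii) of your proposal are correct, and they are in fact tidier than the paper's own arguments: for (ii) the paper derives the spectrum from $U^{2}=(1-p_{m+1})U$ together with a trace argument and obtains the inverse from a Neumann series, while your rank-one identification $U=ww'$, $w=(\sqrt{p_{1}},\ldots,\sqrt{p_{m}})'$, plus Sherman--Morrison gives both facts at once; for (iii) the paper bounds the operator norms of $S_{k}^{0}$ and of $(S_{k}^{0})^{-1}$ (the latter using the inverse formula from (ii)), whereas your two-sided Rayleigh-quotient sandwich, $p_{m+1}\,y'Dy\leq y'S_{k}^{0}y\leq y'Dy$ combined with $\min_{i}p_{i}\|y\|^{2}\leq y'Dy\leq\max_{i}p_{i}\|y\|^{2}$, is more direct and avoids inverting anything.

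However, part (i) of your proposal contains a genuine error, and it sits exactly at the step you single out as ``the main thing to verify''. For $j\neq h$ the generic entry of the covariance matrix is $Q_{0}(A_{i}^{j}\cap A_{l}^{h})-Q_{0}(A_{i}^{j})Q_{0}(A_{l}^{h})=p_{i}p_{l}-p_{i}p_{l}=0$: independence of the coordinates under the uniform law makes the off-diagonal blocks of $S_{k}^{0}$ \emph{vanish}; it does not make them equal to $\{-p_{i}p_{l}\}_{i,l}$. Your claim that every off-diagonal block ``collapses to the rank-one form $-\sqrt{p_{i}p_{l}}$ \ldots\ so that a single $U$ works for all blocks'' contradicts the very statement being proved: if the off-diagonal blocks were $-pp'$ (with $p=(p_{1},\ldots,p_{m})'$) rather than $0$, the matrix $V$ in the factorization would have to have \emph{all} $d^{2}$ blocks equal to $U$ rather than being block diagonal; that matrix has top eigenvalue $d(1-p_{m+1})$, so $I-V$ would no longer be bounded below by $p_{m+1}I$, your part (iii) would fail, and indeed $D^{1/2}(I-V)D^{1/2}$ would not even be positive semidefinite when $d\geq2$ and $p_{m+1}<1-1/d$ --- impossible for a covariance matrix. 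The repair is immediate: since the off-diagonal blocks are zero, $S_{k}^{0}$ is block diagonal with $d$ identical diagonal blocks $D_{m}-pp'=D_{m}^{1/2}(I_{m}-U)D_{m}^{1/2}$, where $D_{m}=\mathrm{diag}\{p_{i}\}_{i\leq m}$, which is exactly $D^{1/2}(I-V)D^{1/2}$ with $V$ block diagonal with blocks $U$. With (i) corrected in this way, your (ii) and (iii) go through unchanged.
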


\begin{proof}
(i) This can easily be checked through some calculation.

(ii) First notice that
\begin{equation}
\label{eq:4.U^2}U^{2}=(1-p_{m+1})U
\end{equation}
Formula (\ref{eq:4.U^2}) implies that at least one eigenvalue equals
$(1-p_{m+1})$. On the other hand, summing up all diagonal entries in $U$ we
get $trace(U)=\sum_{i=1}^{m}p_{i}=1-p_{m+1}$. This allows us to conclude that
there can be only one eigenvalue equal to $1-p_{m+1}$ while the other must be zero.

For the second statement, by Taylor expansion of $(1-x)^{-1}$, $(I-U)^{-1}%
=I+\sum_{h=1}^{\infty}U^{h}$.

Then, using recursively (\ref{eq:4.U^2}), $(I-U)^{-1}=I+U\sum_{h=1}^{\infty
}(1-p_{m+1})^{h}=U+\frac{1}{p_{m+1}}U$.

(iii) For any eigenvalue $\lambda$ of $S_{k}^{0}$ we have:
\[
\lambda\leq\lambda_{k,k} =\left| \!\left| \!\left|  S_{k}^{0} \right|
\!\right| \!\right|  \leq\left| \!\left| \!\left|  D^{1/2}\right| \!\right|
\!\right| ^{2} \left| \!\left| \!\left|  (I-V) \right| \!\right| \!\right|
=\max_{1\leq i \leq m}p_{i} \left( 1-\inf_{\|\underline{x}\|=1}\underline
{x}^{\prime}V\underline{x}\right) =\max_{1\leq i \leq m}p_{i}%
\]
where for the last identity we have used the fact that the eigenvalues of $V$
coincide with the eigenvalues of $U$ with order multiplied by $d$.

For the opposite inequality consider
\begin{align*}
\lambda^{-1}  & \leq\lambda_{1,k}^{-1}=\left\vert \!\left\vert \!\left\vert
{S_{k}^{0}}^{-1}\right\vert \!\right\vert \!\right\vert \leq\left\vert
\!\left\vert \!\left\vert D^{-1}\right\vert \!\right\vert \!\right\vert
\left\vert \!\left\vert \!\left\vert \left(  I+\frac{1}{p_{m+1}}V\right)
\right\vert \!\right\vert \!\right\vert \\
& =\left(  \max_{1\leq i\leq m}p_{i}^{-1}\right)  \left(  1+\frac{1}{p_{m+1}%
}(1-p_{m+1})\right)  =\left(  \min_{1\leq i\leq m}p_{i}\right)  ^{-1}%
p_{m+1}^{-1}.
\end{align*}

\end{proof}

\begin{lem}
\label{lm:eigen2} Suppose that $P$ has density on $[0,1]^{d}$ bounded from
below by $\alpha>0.$ Then the smallest eigenvalue of $S_{k}^{\delta}$, and
consequently $\lambda_{1,k}$, is bounded below by $p_{m+1}\,\min_{1\leq i\leq
m}p_{i}.$
\end{lem}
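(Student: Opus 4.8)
The goal is to show that the smallest eigenvalue $\lambda_{1,\delta}$ of $S_k^\delta$ is bounded below by $p_{m+1}\min_{1\le i\le m}p_i$, where now $P$ is the general measure in $\Omega$ (not necessarily $Q_0$) but has density bounded below by $\alpha>0$. The plan is to reduce the general case to the uniform case already treated in Lemma \ref{th:4.eigen1}, where we obtained exactly the lower bound $p_{m+1}\min_i p_i$ for the eigenvalues of $S_k^0$. The key structural observation is that $S_k^\delta$ and $S_k^0$ share the same marginal probabilities $p_i=u_i-u_{i-1}$ on the diagonal blocks, and differ only in the off-diagonal ($j\ne h$) blocks, which encode the joint law of the coordinates. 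Under $Q_0$ the coordinates are independent, so those blocks are exactly the rank-one terms $-p_ip_l$; under a general $P$ they carry the extra nonnegative mass $P(u_{i-1}\le X_j<u_i,\,u_{l-1}\le X_h<u_l)$.

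\medskip

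\textbf{Step 1: decompose $S_k^\delta$.} First I would write $S_k^\delta = S_k^0 + R$, where $R$ is the block matrix that is zero on the diagonal ($j=h$) blocks and whose $(j,h)$ off-diagonal block ($j\ne h$) has $(i,l)$-entry equal to $P(u_{i-1}\le X_j<u_i,\,u_{l-1}\le X_h<u_l)$. This isolates precisely the contribution of the dependence between distinct marginals. The point is that the diagonal blocks of $S_k^\delta$ coincide with those of $S_k^0$ because the density-bounded-below hypothesis does not change the relation $p_i = u_i-u_{i-1}$ for the marginal increments once we have passed to the uniform-margin coordinates via the transform $P'$ described in the application.

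\medskip

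\textbf{Step 2: show $R\succeq 0$.} The main technical point is to verify that the remainder matrix $R$ is positive semidefinite. I would recognize $R$ as (up to the missing diagonal blocks) the Gram-type matrix built from the joint cell-probabilities $P_{A_i^j A_l^h}$. Concretely, for a vector $x=(x_i^j)$ the quadratic form $x'Rx = \sum_{j\ne h}\sum_{i,l} x_i^j x_l^h\, P(X_j\in A_i^j,\,X_h\in A_l^h)$ should be expressible as an expectation under $P$ of a squared quantity minus its diagonal-block part; writing $g_j(\omega)=\sum_i x_i^j \1_{A_i^j}(X_j)$ one gets $E_P\big[(\sum_j g_j)^2\big]-\sum_j E_P[g_j^2]=\sum_{j\ne h}E_P[g_j g_h]$, and this coincides with $x'Rx$ after accounting for the mean-subtraction already present in the covariance structure. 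Establishing that this form is nonnegative (equivalently, that the off-diagonal dependence only adds positive curvature) is where the density-lower-bound assumption and the covariance interpretation must be combined carefully, and I expect this to be the main obstacle.

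\medskip

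\textbf{Step 3: conclude by Weyl / monotonicity.} Once $R\succeq 0$ is established, the minimax (Courant--Fischer) characterization gives
\[
\lambda_{1,\delta}=\min_{\|y\|=1} y'S_k^\delta y = \min_{\|y\|=1}\big(y'S_k^0 y + y'Ry\big)\ge \min_{\|y\|=1} y'S_k^0 y = \lambda_{1}(S_k^0),
\]
and Lemma \ref{th:4.eigen1}(iii) gives $\lambda_{1}(S_k^0)\ge p_{m+1}\min_{1\le i\le m}p_i$. This yields the bound for $S_k^\delta$, and the chain of inequalities established just before the lemma (relating $\lambda_{1,k}$ to $\lambda_{1,\delta}$ through the eigenvalue-one matrix $M$) transfers it to $\lambda_{1,k}$, completing the proof. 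The density bound $\alpha>0$ enters to guarantee that the $p_i$ stay comparable to the cell widths $u_i-u_{i-1}$, so that combined with the grid condition (\ref{grid}) the bound $p_{m+1}\min_i p_i$ is genuinely of order $k^{-2}$ rather than degenerating.
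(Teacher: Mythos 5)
Your Step 2 is false, and the plan collapses there. The matrix $R=S_{k}^{\delta}-S_{k}^{0}$ has all its diagonal blocks equal to zero (the diagonal blocks of $S_{k}^{\delta}$ and $S_{k}^{0}$ coincide because $P$ and $Q_{0}$ share the same marginals), hence $\mathrm{tr}(R)=0$. A symmetric matrix with zero trace is positive semidefinite only if it is the zero matrix, so $R\succeq 0$ would force every cross-covariance $\mathrm{Cov}_{P}\bigl(\1_{A_{i}^{j}}(X_{j}),\1_{A_{l}^{h}}(X_{h})\bigr)$, $j\neq h$, to vanish, i.e.\ pairwise independence of the coordinates over the cells, which is not assumed and is exactly what a generic $P\in\Omega$ violates. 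Concretely, for $d=2$, $m=1$, $R=\begin{pmatrix}0&c\\ c&0\end{pmatrix}$ with $c=\mathrm{Cov}_{P}(\1_{X_{1}<u_{1}},\1_{X_{2}<u_{1}})$ has eigenvalues $\pm c$, so $S_{k}^{\delta}\succeq S_{k}^{0}$ fails whenever there is any dependence; your hope that the off-diagonal dependence ``only adds positive curvature'' cannot be realized. (A further minor slip: the off-diagonal entries of $R$ are the centered quantities $P(X_{j}\in A_{i}^{j},X_{h}\in A_{l}^{h})-p_{i}p_{l}$, not the raw joint probabilities.)

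The paper avoids comparing $S_{k}^{\delta}$ with $S_{k}^{0}$ in the semidefinite order and instead proves the weaker inequality $S_{k}^{\delta}\succeq\alpha\,S_{k}^{0}$; this is where the density lower bound actually enters (not, as you suggest at the end, to keep the $p_{i}$ comparable to cell widths --- under $\Omega$ the $p_{i}$ \emph{are} the cell widths). Since $P\in\Omega$ gives $Pf_{u}=Q_{0}f_{u}$, each entry of $S_{k}^{\delta}$ equals $P(\overline{f_{u}}\,\overline{f_{v}})$ with $\overline{f_{u}}=f_{u}-Q_{0}f_{u}$, so for every $\underline{a}$,
\[
\underline{a}'S_{k}^{\delta}\underline{a}
=\int_{[0,1]^{d}}\Bigl(\sum_{u}a_{u}\overline{f_{u}}\Bigr)^{2}dP
\;\geq\;\alpha\int_{[0,1]^{d}}\Bigl(\sum_{u}a_{u}\overline{f_{u}}\Bigr)^{2}dQ_{0}
=\alpha\,\underline{a}'S_{k}^{0}\underline{a},
\]
the middle inequality holding pointwise because $dP\geq\alpha\,dQ_{0}$ and the integrand is a square. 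Combined with Lemma \ref{th:4.eigen1}(iii) this yields $\lambda_{1,\delta}\geq\alpha\,p_{m+1}\min_{i}p_{i}$ (the factor $\alpha$, suppressed in the lemma's statement, is harmless for the rate conditions), and the identification $\lambda_{1,k}=\lambda_{1,\delta}$ established before the lemma transfers the bound as in your Step 3. Your Steps 1 and 3 are sound; the missing idea is to replace the false claim $R\succeq 0$ by the comparison $S_{k}^{\delta}\succeq\alpha S_{k}^{0}$ obtained from the pointwise density bound.
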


\begin{proof}
Write $s_{k}^{\delta}\left(  u,v\right)  $ for the $\left(  u,v\right)  -$th
element of $S_{k}^{\delta}$. We have, for $P\in\Omega$, i.e. if $Pf=Q_{0}f $,
for every $f\in\mathcal{F}_{n}^{\delta}$:
\begin{align*}
s_{k}^{\delta}\left(  (j-1)m+i,(h-1)m+l\right)   &  =s_{k}^{\delta}\left(
u,v\right)  =Pf_{u}f_{v}-Pf_{u}Pf_{v}=\\
&  =P\left(  f_{u}-Q_{0}f_{u}\right)  \left(  f_{v}-Q_{0}f_{v}\right)
=P\left(  \overline{f_{u}}\overline{f_{v}}\right)
\end{align*}
where $\overline{f_{u}}=f_{u}-Q_{0}f_{u}.$ For each vector $\underline{a}%
\in\mathbb{R}^{d\cdot m}$ it holds then%

\begin{align*}
\underline{a}^{\prime}S_{k}^{\delta} \underline{a} &  =\sum_{u=1}^{dm}%
\sum_{v=1}^{dm}a_{u}a_{v}P\left(  \overline{f_{u}}\overline{f_{v}}\right)
=P\left(  \sum_{u=1}^{dm}a_{u}\overline{f_{u}}\right)  ^{2}\\
&  =\int_{\left[  0,1\right]  ^{d}}\left(  \sum_{u=1}^{dm}a_{u}\overline
{f_{u}}\right)  ^{2}dP\geq\alpha\int_{\left[  0,1\right]  ^{d}}\left(
\sum_{u=1}^{dm}a_{u}\overline{f_{u}}\right)  ^{2}dQ_{0}=\\
&  =\alpha\,\underline{a}^{\prime}\left\{  Q_{0}\left(  f_{u}-Q_{0}%
f_{u}\right)  \left(  f_{v}-Q_{0}f_{v}\right)  \right\}  _{u,v}\underline
{a}=\alpha\,\underline{a}^{\prime}S_{k}^{0} \underline{a}%
\end{align*}
On the other hand the preceding inequality implies
\begin{equation}
\inf_{\underline{a}}\frac{\underline{a}^{\prime}S_{k}^{\delta} \underline{a}%
}{\left\|  \underline{a}\right\|  ^{2}}\geq\alpha\,\inf_{\underline{a}}%
\frac{\underline{a}^{\prime}S_{k}^{0} \underline{a}}{\left\|  \underline
{a}\right\|  ^{2}}\label{sigma1}%
\end{equation}
that is a lower bound for the smallest eigenvalue of $S_{k}^{\delta}$
depending on the smallest eigenvalue of $S_{k}^{0}$.

Apply Lemma \ref{th:4.eigen1} (iii) to get the lower bound for $\lambda_{1}$.
\end{proof}

\begin{rem}
\label{rem:alpha} Existence of $\alpha>0$ such that the density of $P$ in
$[0,1]^{d}$ is bounded below by $\alpha$ seems necessary for this kind of
approach; see assumption (P3) in \cite{BRW91}.
\end{rem}

From Theorem \ref{th:dli} and using (\ref{grid}) in order to evaluate
$p_{m+1}\,\min_{1\leq i\leq m}p_{i}$, together with the fact that the class
$\mathcal{F}$ is KMT\ with rate $\delta_{n}=n^{-1/2}$ we obtain

\begin{thm}
\label{sec4:thm F infinite}Let (\ref{grid}) hold. Assume that $P$ belongs to
$\Omega$ defined by (\ref{Omega marge}) and has a density bounded below by
some positive number. Let further $k=d\cdot m(n)$ be a sequence such that
$\lim_{n\rightarrow\infty}k=\infty$ and $\lim_{n\rightarrow\infty}
k^{7/2}n^{-1/2}=0$

Then $\frac{n\chi_{n,k}^{2}-k}{\sqrt{2k}}=\frac{n\underline{\gamma}%
_{n,k}^{\prime}S_{n,k}^{-1}\underline{\gamma}_{n,k}-k}{\sqrt{2k}}$ has
limiting normal standard distribution$.$
\end{thm}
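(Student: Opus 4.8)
The plan is to verify that the final statement is a direct specialization of the general Theorem~\ref{th:dli} to the concrete marginal-testing setup of this section. The work has essentially been done by the preceding lemmas; what remains is to check each of the three hypotheses of Theorem~\ref{th:dli} in this special case and to reconcile the growth condition \eqref{D} with the single stated condition $k^{7/2}n^{-1/2}\to 0$. First I would recall that $\mathcal{F}$, the class of characteristic functions of coordinate half-intervals, has already been identified as a $KMT(\delta_n;P)$ class with $\delta_n=n^{-1/2}$ (citing \cite{BM89}), so the KMT hypothesis of Theorem~\ref{th:dli} holds. The envelope is $F=1$ with $RF^{h}=1$ for every $R\in M_1([0,1]^d)$, so all moment requirements are trivially met. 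The sequence $\{\mathcal{F}_n\}$ built in \eqref{effe_k} satisfies the nesting \eqref{effe} and the linear-independence requirement, so the structural hypotheses are in place.

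The decisive input is the eigenvalue bound. By Lemma~\ref{lm:eigen2}, since $P$ has density on $[0,1]^d$ bounded below by some $\alpha>0$, the smallest eigenvalue $\lambda_{1,k}$ of $S_k$ is bounded below by $p_{m+1}\min_{1\le i\le m}p_i$. Here I would use the grid regularity assumption \eqref{grid}, which forces each increment $u_{i+1}-u_i$ to be of order $1/k$ (equivalently of order $1/m$, since $k=d\cdot m$), so that $\min_i p_i$ and $p_{m+1}$ are both bounded below by a constant multiple of $1/k$. Consequently
\[
\lambda_{1,k}\;\ge\;p_{m+1}\min_{1\le i\le m}p_i\;\ge\;\frac{c}{k^{2}}
\]
for some constant $c>0$. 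This explicit lower bound is precisely what allows the abstract conditions \eqref{B} and \eqref{D}, which contain the unknown $\lambda_{1,k}$, to be converted into the single explicit growth rate on $k$.

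Next I would substitute this bound into \eqref{B} and \eqref{D}. For \eqref{B}, with $\delta_n=n^{-1/2}$ and $\lambda_{1,k}^{-1/2}\le c'k$, one gets $\lambda_{1,k}^{-1/2}k^{1/2}\delta_n\log n\le c'k^{3/2}n^{-1/2}\log n$, which tends to $0$ whenever $k^{7/2}n^{-1/2}\to0$ (indeed even under the weaker $k^{3/2}n^{-1/2}\log n\to0$). For \eqref{D}, with $\lambda_{1,k}^{-1}\le c''k^{2}$, one gets $\lambda_{1,k}^{-1}k^{3/2}n^{-1/2}\le c''k^{7/2}n^{-1/2}$, which tends to $0$ exactly under the stated hypothesis $k^{7/2}n^{-1/2}\to0$. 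Thus the condition $k^{7/2}n^{-1/2}\to0$ is the binding one, coming from \eqref{D}, and it simultaneously guarantees \eqref{B}. Together with $k\to\infty$, all hypotheses of Theorem~\ref{th:dli} are verified, so $\frac{n\chi_{n,k}^{2}-k}{\sqrt{2k}}\overset{d}{\to}N(0,1)$. The identity $n\chi_{n,k}^2=n\underline{\gamma}_{n,k}^{\prime}S_{n,k}^{-1}\underline{\gamma}_{n,k}$ under $H0$ is just Proposition~\ref{th:matrixform}(i) combined with $\sqrt{n}\,\underline{\nu}_{n}=\underline{\gamma}_{n}$ when $P\in\Omega$, so the final displayed equality requires no separate argument.

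**The main obstacle** I anticipate is not any of the above, which are bookkeeping substitutions, but rather making the order-$1/k$ estimate for $p_{m+1}\min_i p_i$ fully rigorous from \eqref{grid}. The condition \eqref{grid} controls $k(u_{i+1}-u_i)$ uniformly away from $0$ and $\infty$ for $i\le m-1$, but the rightmost cell $p_{m+1}=1-u_m$ is not directly covered by that $\min/\max$ over $i=1,\dots,m-1$; one must argue separately that the grid assumption also keeps $1-u_m$ of order $1/k$ (e.g. because the cell lengths summing to $1$ with $m$ interior cells of comparable size forces the last one to be comparable too). Handling this boundary cell carefully, and being explicit that $m$ and $k$ differ only by the fixed factor $d$, is where a reader could reasonably ask for more detail; everything else follows by plugging the Lemma~\ref{lm:eigen2} bound into Theorem~\ref{th:dli}.
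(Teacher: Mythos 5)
Your proposal is correct and is essentially the paper's own proof: the paper disposes of this theorem in a single sentence, invoking Theorem~\ref{th:dli}, the eigenvalue bound of Lemma~\ref{lm:eigen2} evaluated through (\ref{grid}), and the KMT rate $\delta_{n}=n^{-1/2}$, and your substitutions $\lambda_{1,k}\geq c\,k^{-2}$ into (\ref{B}) and (\ref{D}) --- with (\ref{D}) the binding condition producing $k^{7/2}n^{-1/2}\rightarrow0$, and the displayed identity coming from Proposition~\ref{th:matrixform}(i) with $\sqrt{n}\,\underline{\nu}_{n}=\underline{\gamma}_{n}$ under $H0$ --- merely make explicit what that sentence leaves implicit.

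One remark on the ``obstacle'' you flag: your parenthetical repair cannot work, because the constraint that the cell lengths sum to $1$ bounds the boundary cells only from \emph{above}, never from below; note also that $p_{1}=u_{1}$ is just as uncontrolled by the literal (\ref{grid}) as $p_{m+1}=1-u_{m}$, since (\ref{grid}) constrains only the differences $u_{i+1}-u_{i}$, $i=1,\ldots,m-1$, i.e.\ the cells $p_{2},\ldots,p_{m}$. For instance, a grid with $u_{1}=k^{-10}$ and equally spaced interior points satisfies (\ref{grid}) as written, yet makes $\min_{1\leq i\leq m}p_{i}=k^{-10}$ and destroys the bound $\lambda_{1,k}\geq c\,k^{-2}$ on which the rate $k^{7/2}n^{-1/2}\rightarrow0$ rests. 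The honest resolution is not an argument but a reading: (\ref{grid}) must be interpreted as applying to all $m+1$ cells (a quasi-uniform partition of $[0,1]$), which is plainly the paper's intent, since its own $d=2$ refinement later uses $p_{i,j}\geq\alpha\,p_{i,\cdot}\,p_{\cdot,j}\geq c\,\alpha\,m^{-2}$ for every cell, boundary cells included. With that reading, your verification is complete and coincides with the paper's argument.
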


In the last part of this Section we intend to show that conditions in Theorem
\ref{sec4:thm F infinite} can be weakened for small values of $d.$ When $d=2$
the rate for $k=2m$ is achieved when condition (\ref{B}) holds.

We consider the case when $d=2$; for larger values of $d$, see Remark
\ref{rmk:d>2}.

In order to make the notation more clear, define $p_{i,j}$ and $N_{i,j},$
respectively, $P(A_{i}^{1}\times A_{j}^{2})$ and $nP_{n}(A_{i}^{1}\times
A_{j}^{2})$, where the events $A_{i}^{h}$, $h=1,2$, $i=1,\ldots,m$ are as
above. The marginal distributions will be denoted $\ p_{i,\cdot}=p_{\cdot
,i}=p_{i}$ (since H0 holds), and the empirical marginal distributions by
$N_{i,\cdot}/n$ and $N_{{\cdot,i}}/n$.

Turning back to the proof of Theorem \ref{th:dli} we see that condition
(\ref{D}) is used in order to ensure that $\underline{\gamma}_{n,k}^{\prime
}(S_{n,k}^{-1}-S_{k}^{-1})\underline{\gamma}_{n,k}$ goes to $0$ in probability
as $n$ tends to infinity, while condition (\ref{B}) implies the convergence of
$\frac{\underline{\gamma}_{n,k}^{\prime}S_{k}^{-1}\underline{\gamma}_{n,k}%
-2m}{\sqrt{4m}}$ to the standard normal distribution.

%Recall that
%\begin{equation*}
%(\ref{B})\Rightarrow \frac{n}{\sqrt{4k_{n}}}(\chi _{n}-\underline{\gamma }%
%_{n,k}^{\prime }S_{k}^{-1}\underline{\gamma }_{n,k})\overset{P}{\rightarrow }%
%0.
%\end{equation*}

Let
\begin{align*}
\mathcal{Q}=\Biggl\{Q\in M_{1}([0,1]^{2})\Biggr.  & :\left.  \sum_{i=1}%
^{m+1}q_{i,j}=p_{\cdot,j}=q_{\cdot,j}^{0}=u_{j+1}-u_{j},\:j=1,\ldots
,m+1;\right. \\
& \Biggl.\sum_{j=1}^{m+1}q_{i,j}=p_{i,\cdot}=q_{i,\cdot}^{0}=u_{i+1}%
-u_{i},\:i=1,\ldots,m+1\Biggr\},
\end{align*}
where $q_{i,j}^{0}=Q^{0}(A_{i}^{1}\times A_{j}^{2})=(u_{i+1}-u_{i}%
)(u_{j+1}-u_{j})$.

\begin{lem}
When $P\in\Omega$, it holds
\begin{align}
n\chi_{n,k}^{2}  & =\min_{Q\in\mathcal{Q}}\sum_{i=1}^{m+1}\sum_{j=1}%
^{m+1}\frac{(nq_{i,j}-N_{i,j})^{2}}{N_{i,j}}\mathbb{I}_{N_{i,j}>0}%
\label{chi_n_BRW}\\
\underline{\gamma}_{n,k}^{\prime}S_{k}^{-1}\underline{\gamma}_{n,k}  &
=\min_{Q\in\mathcal{Q}}\sum_{i=1}^{m+1}\sum_{j=1}^{m+1}\frac{(nq_{i,j}%
-N_{i,j})^{2}}{np_{i,j}}\label{chi_n_bar}%
\end{align}

\end{lem}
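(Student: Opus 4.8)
The plan is to read both identities as saying that a Mahalanobis-type quadratic form in the vector of marginal discrepancies coincides with a constrained weighted least-squares problem over contingency tables with prescribed margins; the bridge is the projection duality already recorded in Proposition \ref{th:matrixform} and in the equivalence $\chi_n^2=\overline{\chi}_n^2$. First I would reduce everything to the cells: since every $f\in\mathcal{F}_n$ (equivalently every $f\in\mathcal{F}_n^{\delta}$) is constant on each rectangle $A_i^1\times A_j^2$, both the linear constraints defining $\Omega$ and the integrand of $\chi^2$ depend on $Q$ only through the cell aggregates $q_{i,j}=Q(A_i^1\times A_j^2)$. Thus the whole problem becomes finite, indexed by the $(m+1)^2$ cells.

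For \eqref{chi_n_BRW} I would start from $n\chi_{n,k}^2=\inf_{Q\in\Omega_n\cap\Lambda_n}n\chi^2(Q,P_n)$, using that a measure $Q\ll P_n$ is a weight vector $(q_\ell)$ on the sample with $n\chi^2(Q,P_n)=\sum_\ell(nq_\ell-1)^2$. For fixed cell totals $q_{i,j}$, minimizing $\sum_{X_\ell\in A_i^1\times A_j^2}(nq_\ell-1)^2$ under $\sum_{X_\ell\in\,\cdot}q_\ell=q_{i,j}$ is an elementary quadratic problem whose optimum spreads the mass uniformly over the $N_{i,j}$ sample points of the cell, giving the contribution $(nq_{i,j}-N_{i,j})^2/N_{i,j}$; an empty cell forces $q_{i,j}=0$, which is the role of $\mathbb{I}_{N_{i,j}>0}$. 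It then remains to minimize $\sum_{i,j}(nq_{i,j}-N_{i,j})^2/N_{i,j}\,\mathbb{I}_{N_{i,j}>0}$ over cell tables with the fixed marginals, i.e. over $\mathcal{Q}$.

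For \eqref{chi_n_bar} I would run the analogous but population-weighted computation. Setting $z_{i,j}=(nq_{i,j}-N_{i,j})/(np_{i,j})$, the objective is $n\Vert z\Vert^2_{L^2(P)}$, and the marginal constraints read $\langle \mathbb{I}_{A_i^1},z\rangle_P=p_{i,\cdot}-\hat p_{i,\cdot}$ and $\langle \mathbb{I}_{A_j^2},z\rangle_P=p_{\cdot,j}-\hat p_{\cdot,j}$, the right-hand sides being $-n^{-1/2}$ times the entries of $\underline{\gamma}_{n,k}$. By Lagrange multipliers the minimiser lies in the span of the strip indicators, $z=\sum_ic_i\mathbb{I}_{A_i^1}+\sum_jd_j\mathbb{I}_{A_j^2}$, and the optimal value equals $t'G^{-1}t$, where $t$ is the discrepancy vector and $G$ is the Gram matrix of the (centred) strip indicators in $L^2(P)$. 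The decisive step is to recognise $G$ as $S_k$: after dropping the last strip in each margin --- legitimate because $\sum_i\mathbb{I}_{A_i^1}=\sum_j\mathbb{I}_{A_j^2}=1$ and the discrepancies sum to zero, so that $S_k$ is the invertible centred covariance used throughout Section 3 --- one obtains exactly $\underline{\gamma}_{n,k}'S_k^{-1}\underline{\gamma}_{n,k}$. Identity \eqref{chi_n_BRW} reappears from the same Gram computation with $P$ replaced by $P_n$, matched against the matrix form $n\chi_{n,k}^2=\underline{\nu}_n'S_{n,k}^{-1}\underline{\nu}_n$ of Proposition \ref{th:matrixform}.

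The main obstacle I anticipate is twofold. The substantive analytic point is the Gram-matrix identification: proving that the inverse Gram matrix of the strip indicators is precisely $S_k^{-1}$ (resp. $S_{n,k}^{-1}$) once the constant-function redundancy is removed; this is where the rank deficiency coming from $\sum_i\mathbb{I}_{A_i^1}=1$ and the zero-sum of the marginal discrepancies must be handled with care so as to land on the invertible $k\times k$ block, turning the uncentred moments $P(A_r\cap A_s)$ into the centred covariances $s_k(r,s)$. The second, more delicate, point concerns \eqref{chi_n_BRW}: the primal of $\chi_{n,k}^2$ lives a priori in the space $M$ of signed measures, whereas the minimisation is displayed over the probability set $\mathcal{Q}$; one must check that confining the cell table to $\mathcal{Q}$ does not change the optimal value, i.e. that the unconstrained-sign minimiser is feasible, or that the infimum is attained within $\mathcal{Q}$.
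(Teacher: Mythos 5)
Your proposal is correct in substance, but it reaches the two identities by a genuinely different route than the paper. The paper never performs your within-cell collapse: it quotes \cite{BRW91} for the fact that the minimum on the right of \eqref{chi_n_BRW} equals $\sum_{i,j}N_{i,j}(a_i+b_j)^2$, with $(a,b)$ solving the normal equations of that constrained quadratic program; it then takes the left side in the form $\chi^2_{n,k}=\frac{1}{4}\underline{a}^{\prime}S_{n,k}\underline{a}$ from the proof of Proposition \ref{th:matrixform} (written for the class $\mathcal{F}_n^{\delta}$), exhibits the explicit correspondence \eqref{a_tilde} between the dual coefficients $(\tilde{a}_i,\tilde{b}_j,\tilde{a}_0)$ and $(a,b)$, and finishes ``after some algebra''; \eqref{chi_n_bar} is declared similar. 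You replace this coefficient matching by (a) the primal collapse of $n\chi^2(Q,P_n)=\sum_{\ell}(nq_\ell-1)^2$ onto the cells, which is what actually explains the weights $1/N_{i,j}$, the indicator $\mathbb{I}_{N_{i,j}>0}$, and why the optimal table vanishes on empty cells, and (b) the minimum-norm projection identity giving the optimal value $t^{\prime}G^{-1}t$ with $G$ recognized as the covariance matrix. The paper's computation buys immunity from your centering issue (it is silently absorbed by the extra coefficients $a_{m+1},b_{m+1},\tilde{a}_0$ in \eqref{a_tilde}); yours buys a self-contained argument, independent of the external reference, that proves both identities by one mechanism. Note also that whether you work with cumulative rectangles ($S_k$) or disjoint strips ($S_k^{\delta}$) is immaterial, by the invariance of the quadratic form under the triangular matrix $M$ established just before Lemma \ref{th:4.eigen1}.

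Two points need to be written out to close your anticipated obstacles. The centering one does close: feasibility forces $\langle 1,z\rangle_P=0$ (the discrepancies along a full margin sum to zero), so each constraint $\langle \phi_r,z\rangle_P=c_r$ can be replaced by $\langle \phi_r-P\phi_r,z\rangle_P=c_r$, and since the constant function is orthogonal in $L^2(P)$ to the centred strips, the projection lands in their span and the relevant Gram matrix is exactly the covariance $S_k$ (resp.\ $S_{n,k}$ under $P_n$). The positivity question, however, is real but not specific to your proof: the paper, via the Lagrangian characterization borrowed from \cite{BRW91}, also computes the minimum over the affine set of signed tables with prescribed margins, and equality with the minimum over $\mathcal{Q}\subset M_1$ holds exactly when the normal-equation solution $nq_{i,j}=N_{i,j}(1+a_i+b_j)$ is nonnegative --- automatic with probability tending to one under $H0$, but not for every finite sample. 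So on this point your proposal is no weaker than the paper's own proof, and is more candid about the gap.
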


\begin{proof}
We prove (\ref{chi_n_BRW}), since the proof of (\ref{chi_n_bar}) is similar.
Following \cite{BRW91} the RHS in (\ref{chi_n_BRW}) is
\[
\sum_{i=1}^{m+1}\sum_{j=1}^{m+1}N_{i,j}(a_{i}+b_{j})^{2},
\]
where the vectors $a$ and $b$ $\in\mathbb{R}^{m+1}$ are solutions of the
equations
\begin{align*}
a_{i}\frac{N_{i,\cdot}}{n} &  =p_{i}-\frac{N_{i,\cdot}}{n}-\sum_{j=1}%
^{m+1}b_{j}\frac{N_{i,j}}{n},\;\;\;i=1,\ldots,m+1,\\
b_{j}\frac{N_{\cdot,j}}{n} &  =p_{j}-\frac{N_{\cdot,j}}{n}-\sum_{i=1}%
^{m+1}a_{i}\frac{N_{i,j}}{n},\;\;\;j=1,\ldots,m+1.
\end{align*}
Let $\underline{a}=(\tilde{a}_{1},\ldots,\tilde{a}_{m},\tilde{b}_{1}%
,\ldots,\tilde{b}_{m})$ be the coefficients in equation (\ref{a_matrix}).
Making use of equations (\ref{a0}) and (\ref{eqn:3.a_s}) we obtain, using the
class $\mathcal{F}_{n}^{\delta}$ in place of $\mathcal{F}_{n}$ in the
definition of $\chi^{2}_{n,k},$
\begin{align}
\label{a_tilde}\widetilde{a}_{i} &  =2\left(  a_{i}-a_{m+1}\right)
,\;i=1,\ldots,m\\
\widetilde{b}_{j} &  =2\left(  b_{j}-b_{m+1}\right)  ,\;j=1,\ldots
,m\nonumber\\
\widetilde{a}_{0} &  =2\left(  a_{m+1}+b_{m+1}\right)  .\nonumber
\end{align}

From the proof of Proposition \ref{th:matrixform} we get, setting
$\delta_{i,j}=1$ for $i=j$ and 0 otherwise,
\begin{align*}
\chi_{n,k}^{2}  & =\frac{1}{4}\underline{a}^{\prime}S_{n,k}\underline{a}\\
& = \frac{1}{4n}\sum_{i=1}^{m}\sum_{j=1}^{m}\left[ \tilde{a}_{i}\tilde{a}%
_{j}\left(  N_{i,\cdot}\delta_{i,j}-N_{i,\cdot}N_{j,\cdot}/n\right)
+\tilde{b}_{i}\tilde{b}_{j}\left(  N_{\cdot,i}\delta_{i,j} -N_{\cdot
,i}N_{\cdot,j}/n\right)  \right. \\
& \qquad\qquad\qquad\left.  +2\tilde{a}_{i}\tilde{b}_{j}\left(  N_{i,j}%
-N_{i\cdot}N_{\cdot,j}/n\right)  \right]  ,
\end{align*}
which, using (\ref{a_tilde}) and after some algebra yields
\[
n\chi_{n,k}^{2}=\sum_{i=1}^{m+1}\sum_{j=1}^{m+1}N_{i,j}(a_{i}+b_{j})^{2}.
\]

\end{proof}

We now can refine Theorem \ref{sec4:thm F infinite}.

\begin{thm}
Let (\ref{grid}) hold. Assume that $P\in\Omega$ satisfies the condition in
Lemma \ref{lm:eigen2} for some $\alpha>0$.

Let $m(n)$ be such that $\ \lim_{n\rightarrow\infty}m=\infty$ and
$\lim_{n\rightarrow\infty}m^{3/2}n^{-1/2}\log n=0.$

Then, under $H0$,
\[
\frac{n\chi^{2}_{n,k}-2m}{\sqrt{4m}}\longrightarrow N(0,1).
\]

\end{thm}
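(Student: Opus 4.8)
The plan is to exploit the two representations established in the preceding Lemma, namely (\ref{chi_n_BRW}) for $n\chi_{n,k}^{2}$ and (\ref{chi_n_bar}) for $\underline{\gamma}_{n,k}^{\prime}S_{k}^{-1}\underline{\gamma}_{n,k}$, and to write, under $H0$,
\[
\frac{n\chi_{n,k}^{2}-2m}{\sqrt{4m}}=\frac{\underline{\gamma}_{n,k}^{\prime}S_{k}^{-1}\underline{\gamma}_{n,k}-2m}{\sqrt{4m}}+\frac{n\chi_{n,k}^{2}-\underline{\gamma}_{n,k}^{\prime}S_{k}^{-1}\underline{\gamma}_{n,k}}{\sqrt{4m}}.
\]
The first summand is the quadratic form built with the \emph{true} covariance $S_{k}$; the remainder is exactly the term $D$ of the proof of Theorem \ref{th:dli}, i.e.\ $\underline{\gamma}_{n,k}^{\prime}(S_{n,k}^{-1}-S_{k}^{-1})\underline{\gamma}_{n,k}$ rescaled, since under $H0$ one has $n\chi_{n,k}^{2}=\underline{\gamma}_{n,k}^{\prime}S_{n,k}^{-1}\underline{\gamma}_{n,k}$. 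The whole point of the refinement is that, for $d=2$, this remainder can be controlled under condition (\ref{B}) alone, so that the more stringent condition (\ref{D}) is not needed. Observe first that with $k=2m$, $\delta_{n}=n^{-1/2}$ and the eigenvalue bound $\lambda_{1,k}\geq p_{m+1}\min_{i}p_{i}\asymp m^{-2}$ coming from Lemma \ref{lm:eigen2} together with (\ref{grid}), condition (\ref{B}) reads precisely $m^{3/2}n^{-1/2}\log n\to0$, the hypothesis of the theorem.

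For the first summand I would re-run the argument of Theorem \ref{th:dli}: decomposing $\underline{\gamma}_{n,k}^{\prime}S_{k}^{-1}\underline{\gamma}_{n,k}$ through the KMT approximation (\ref{KMT2}) gives the terms $A+B+C$ of that proof, with no $D$ term since the true $S_{k}$ is used. The Gaussian term $A$ converges to $N(0,1)$ by the CLT for the i.i.d.\ variables $Z_{i}^{2}$, while $B=O_{P}(\lambda_{1,k}^{-1/2}k^{1/2}\delta_{n}\log n)$ and $C=o(B)$ are both $o_{P}(1)$ under (\ref{B}). Hence $(\underline{\gamma}_{n,k}^{\prime}S_{k}^{-1}\underline{\gamma}_{n,k}-2m)/\sqrt{4m}\To N(0,1)$ using only (\ref{B}).

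The core of the work, and the main obstacle, is to show that the remainder is $o_{P}(1)$, i.e.\ that $n\chi_{n,k}^{2}-\underline{\gamma}_{n,k}^{\prime}S_{k}^{-1}\underline{\gamma}_{n,k}=o_{P}(\sqrt{m})$; here I would use the representations (\ref{chi_n_BRW})--(\ref{chi_n_bar}) rather than matrix norms. Since $P$ has density bounded below by $\alpha$, (\ref{grid}) gives $p_{i,j}=P(A_{i}^{1}\times A_{j}^{2})\geq\alpha(u_{i+1}-u_{i})(u_{j+1}-u_{j})\gtrsim\alpha m^{-2}$, hence $np_{i,j}\gtrsim\alpha n m^{-2}\to\infty$ much faster than $\log m$. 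Two facts follow. First, $\Pr(\min_{i,j}N_{i,j}=0)\leq(m+1)^{2}\max_{i,j}(1-p_{i,j})^{n}\to0$, so with probability tending to one all cells are occupied and the indicator $\mathbb{I}_{N_{i,j}>0}$ in (\ref{chi_n_BRW}) may be dropped. Second, a Bernstein bound for the $\mathrm{Binomial}(n,p_{i,j})$ counts, combined with a union bound over the $(m+1)^{2}$ cells, yields
\[
\rho_{n}:=\max_{i,j}\left\vert \frac{N_{i,j}}{np_{i,j}}-1\right\vert =O_{P}\!\left(m\sqrt{\tfrac{\log m}{n}}\right),
\]
which is $o_{P}(m^{-1/2})$ precisely because $m^{3}\log m=o(n)$, an inequality that follows from $m^{3/2}n^{-1/2}\log n\to0$.

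It then remains to convert this uniform relative deviation into a bound on the difference of the two minima. Writing $L_{N}(Q)=\sum_{i,j}(nq_{i,j}-N_{i,j})^{2}/N_{i,j}$ and $L_{p}(Q)=\sum_{i,j}(nq_{i,j}-N_{i,j})^{2}/(np_{i,j})$, the pointwise sandwich $(1-\rho_{n})L_{p}(Q)\leq L_{N}(Q)\leq(1+\rho_{n})L_{p}(Q)$ holds simultaneously for all $Q\in\mathcal{Q}$ on the event that all cells are occupied; minimising over $\mathcal{Q}$ gives $\vert n\chi_{n,k}^{2}-\underline{\gamma}_{n,k}^{\prime}S_{k}^{-1}\underline{\gamma}_{n,k}\vert\leq\rho_{n}\,\underline{\gamma}_{n,k}^{\prime}S_{k}^{-1}\underline{\gamma}_{n,k}$. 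Since $\underline{\gamma}_{n,k}^{\prime}S_{k}^{-1}\underline{\gamma}_{n,k}=O_{P}(m)$ by the first step, the right-hand side is $o_{P}(m^{-1/2})\cdot O_{P}(m)=o_{P}(\sqrt{m})$, so the remainder divided by $\sqrt{4m}$ is $o_{P}(1)$, and Slutsky's lemma combines the two summands into the asserted $N(0,1)$ limit. I expect the delicate point to be the sharpness of the relative deviation estimate: the crude bound $\rho_{n}\cdot O_{P}(m)=o_{P}(m)$ is \emph{not} enough after dividing by $\sqrt{m}$, so one really must extract the factor $m^{-1/2}$ from $\rho_{n}$, which is exactly where the weaker rate $m^{3/2}n^{-1/2}\log n\to0$ (rather than the $k^{7/2}n^{-1/2}\to0$ of Theorem \ref{sec4:thm F infinite}) is used and no slack remains.
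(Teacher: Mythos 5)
Your proposal matches the paper's proof essentially step for step: the same reduction to showing $\bigl(n\chi^{2}_{n,k}-\underline{\gamma}_{n,k}^{\prime}S_{k}^{-1}\underline{\gamma}_{n,k}\bigr)/\sqrt{4m}=o_{P}(1)$ (with the $N(0,1)$ limit of the $S_{k}$-quadratic form coming from the $A+B+C$ part of Theorem \ref{th:dli} under (\ref{B})), the same sandwich of the two constrained minima (\ref{chi_n_BRW})--(\ref{chi_n_bar}) by the uniform relative cell deviation, and the same conclusion via $\underline{\gamma}_{n,k}^{\prime}S_{k}^{-1}\underline{\gamma}_{n,k}=O_{P}(m)$ and Slutsky. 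The only cosmetic difference is that you establish the key estimate (\ref{limit celle}) with a Bernstein-plus-union bound on the binomial counts, whereas the paper uses the Shorack--Wellner inequalities (\ref{eqn:SW}); both are standard exponential binomial tail bounds and yield the same rate.
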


\begin{proof}
It is enough to prove $\frac{n \chi^{2}_{n,k}-\underline{\gamma}_{n,k}%
^{\prime}S_{k}^{-1}\underline{\gamma}_{n,k}}{\sqrt{4m}}=o_{P}(1).$

Denote $\hat{P}$ and $\overline{P}$ the minimizers of (\ref{chi_n_BRW}) and
(\ref{chi_n_bar}) in $\mathcal{Q}$. Let $\hat{p}_{i,j}$ and $\overline
{p}_{i,j}$ denote the respective probabilities of cells.

We write
\begin{align*}
n {\chi}^{2}_{n,k}-\underline{\gamma}_{n,k}^{\prime}S_{k}^{-1}\underline
{\gamma}_{n,k}  & \leq\sum_{i=1}^{m+1}\sum_{j=1}^{m+1}\left(  N_{i,j}%
-n\overline{p}_{i,j}\right)  ^{2}\left(  \frac{1}{N_{i,j}}-\frac{1}{np_{i,j}%
}\right) \\
& \leq\max_{i,j}\left(  \frac{np_{i,j}}{N_{i,j}}-1\right)  n\underline{\gamma
}_{n,k}^{\prime}S_{k}^{-1}\underline{\gamma}_{n,k}%
\end{align*}
and
\begin{align*}
n{\chi}_{n,k}^{2}-\underline{\gamma}_{n,k}^{\prime}S_{k}^{-1}\underline
{\gamma}_{n,k}  & \geq\min_{Q \in\mathcal{Q}}\sum_{i=1}^{m+1}\sum_{j=1}%
^{m+1}\frac{\left(  N_{i,j}-nq_{i,j}\right)  ^{2}}{np_{i,j}}\left(
\frac{np_{i,j}}{N_{i,j}}-1\right) \\
& \geq-\max_{i,j}\left| \frac{np_{i,j}}{N_{i,j}}-1\right|  \underline{\gamma
}_{n,k}^{\prime}S_{k}^{-1}\underline{\gamma}_{n,k}.
\end{align*}
Whenever
\begin{equation}
\sqrt{m}\max_{i,j}\left|  \frac{np_{i,j}}{N_{i,j}}-1\right|  \overset
{P}{\rightarrow}0\label{limit celle}%
\end{equation}
holds, then the above inequalities yield $\frac{n\chi^{2}_{n,k}-\underline
{\gamma}_{n,k}^{\prime}S_{k}^{-1}\underline{\gamma}_{n,k}}{\sqrt{4m}}%
=o_{P}\left(  \frac{\underline{\gamma}_{n,k}^{\prime}S_{k}^{-1}\underline
{\gamma}_{n,k}}{m}\right)  =o_{P}\left(  \frac{\underline{\gamma}%
_{n,k}^{\prime}S_{k}^{-1}\underline{\gamma}_{n,k}-2m}{\sqrt{4m}}\frac{2}%
{\sqrt{m}}+1\right)  =o_{P}(1) $ , which proves the claim.\newline We now
prove (\ref{limit celle}). We proceed as in Lemma 2 in \cite{BRW91}, using
inequalities (10.3.2) in \cite{ShoWell1986}. Let $B_{n}\sim Bin(n,p)$. Then,
for $t>1$,
\begin{equation}
\label{eqn:SW}\Pr\left(  \frac{np}{B_{n}}\geq t\right)  \leq\exp\left\{
-np\,h\left( 1/ t\right)  \right\}  \quad\mbox{and}\quad\Pr\left(  \frac
{B_{n}}{np}\geq t\right)  \leq\exp\left\{  -np\,h\left(  t\right)  \right\} ,
\end{equation}
where $h\left(  t\right)  =t\log t-t+1$ is a positive function.

Since $N_{i,j}\sim Bin(n,p_{i,j})$,
\begin{align*}
Pr\left\{  \max_{i,j}\left(  \frac{np_{i,j}}{N_{i,j}}-1\right)  \geq\frac
{t}{\sqrt{m}}\right\}   & \leq\sum_{i=1}^{m+1}\sum_{j=1}^{m+1}Pr\left\{
\frac{np_{i,j}}{N_{i,j}}\geq\frac{t}{\sqrt{m}}+1\right\} \\
& \leq\sum_{i=1}^{m+1}\sum_{j=1}^{m+1}\exp\left\{  -np_{i,j}h\left(  1/\left(
1+t/\sqrt{m}\right) \right)  \right\} \\
(\mbox{by (\ref{grid}) and by} p_{i,j}>\alpha p_{i,\cdot}p_{\cdot,j}) &
\leq(m+1)^{2}\exp\left\{ -c\alpha\frac{n}{\log n}(\log n)m^{-2}h\left(
1/\left( 1+t/\sqrt{m}\right) \right)  \right\} .
\end{align*}

For $x=1+\varepsilon$, $h(x)=O(\varepsilon^{2})$. Therefore, using (\ref{B})
with $k=2m$, for every $M>0$ there exists $n$ large enough that
\[
\alpha c\frac{n}{\log n}m^{-2}\,h\left(  1+\frac{-t/\sqrt{m}}{1+t/\sqrt{m}%
}\right)  \geq M
\]
and consequently $Pr\left\{  \max_{i,j}\left(  \frac{np_{i,j}}{N_{i,j}%
}-1\right)  \geq\frac{t}{\sqrt{m}}\right\} $ goes to 0.

To get convergence to zero of $Pr\left\{  \max_{i,j}\left(  1-\frac{np_{i,j}%
}{N_{i,j}}\right)  \geq\frac{t}{\sqrt{m}}\right\} =$ $Pr\left\{  \max_{i,j}
\frac{N_{i,j}}{np_{i,j}} \geq\frac{1}{1-t/\sqrt{m}}\right\} $, the second
inequality in (\ref{eqn:SW}) is used in a similar way.
\end{proof}

\begin{rem}
\label{rmk:d>2} The preceding arguments carry over to the case $d>2$ and yield
to the condition $\lim_{n}m^{d+1/2}n^{-1/2}\log n=0$. However for $d\geq6$
this ultimate condition is stronger than (\ref{D}).
\end{rem}

\section{Application: a contamination model}

Let $\mathcal{P}_{\theta}$ an identifiable class of densities on $\mathbb{R}
$. A contamination model typically writes
\begin{equation}
p(x)=(1-\lambda)f_{\theta}(x)+\lambda r(x)\label{eqn:5.contamination model}%
\end{equation}
where $\lambda$ is supposed to be close to zero and $r(x)$ is a density on
$\mathbb{R}$ which represents the distribution of the contaminating data.

An example is when $f_{\theta}(x)=\theta e^{-\theta x}$, $x>0$ and $r(x)$ is a
Pareto type distribution, say
\begin{equation}
\label{eqn:5.pareto_type}r(x):=r_{\gamma,\nu}(x)=\gamma\nu^{\gamma}
(x)^{-(\gamma+1)},
\end{equation}
with $x>\nu$ and $\gamma>1,\;\nu>1$.

Such a case corresponds to a proportion $\lambda$ of outliers generated by the
density $r_{\gamma,\nu}$.

We test contamination when we have at hand a sample $X_{1},\ldots,X_{n}$ of
i.i.d. r.v.'s with unknown density function $p(x)$ as in
(\ref{eqn:5.contamination model}). We state the test paradigm as follows.

Let $H0$ denote the composite null hypothesis $\lambda=0$, i.e.
\begin{align*}
H0  & :\;p(x)=f_{\theta_{0} }(x),\;\theta_{0} \in\Theta\\
& \mbox{versus}\\
H1  & :\;p(x)=(1-\lambda)f_{\theta}(x)+\lambda r(x)\\
& \qquad\mbox{for some }\theta\in\Theta\,\mbox{ and with }\lambda\not =0.
\end{align*}

Such problems have been addressed in the recent literature; see
\cite{lemdaniPons99} and references therein. We assume identifiability,
stating that, under $H1$, $\lambda$, $\theta$ and $r$ are uniquely defined.
This assumption holds for example when $f_{\theta}(x)=\theta e^{-\theta x}$
and $r(x)$ is like in (\ref{eqn:5.pareto_type}).

For test problems pertaining to $\lambda$ we embed $p(x)$ in the class of
density functions of signed measures with total mass 1, allowing to belong to
$\Lambda_{0}$ an open interval that contains 0.

In order to present the test statistic, we first consider a simplified version
of the problem above.

Assume that $\theta_{0} =\alpha$ is fixed, i.e. $\Theta=\left\{
\alpha\right\} $. We consider the hypotheses
\begin{align*}
H0  & :\;p(x)=f_{\alpha}(x)\\
& \mbox{versus}\\
H1  & :\;p(x)=(1-\lambda)f_{\alpha}(x)+\lambda r(x) ,\qquad\mbox{with }\lambda
\not =0.
\end{align*}

In this case $\Omega=\left\{  f_{\alpha}\right\}  $ and the null hypothesis H0
is simple.

For this problem the $\chi^{2}$ approach appears legitimate. From the
discussion in Section \ref{sec:1.intro} the $\chi^{2}$ criterion is robust
against inliers. A contamination model as (\ref{eqn:5.contamination model})
captures the outlier contamination through the density $r$. As such the test
statistic does not need to have any robustness property against those, since
they are included in the model. At the contrary, missing data might lead to
advocate in favour of $H1$ unduly. Therefore the test statistic should be
robust versus such cases.

By the necessary inclusion $f^{\ast}=2\left(  \frac{q^{\ast}}{p}-1\right)
\in\mathcal{F}$ we define
\begin{equation}
\mathcal{F}=\mathcal{F}_{\alpha}=\left\{  g=2\left(  \frac{f_{\alpha}%
}{(1-\lambda)f_{\alpha}+\lambda r}-1\right)  \text{ \ \ such that \ \ }%
\;\int|g|f_{\alpha}<\infty,\, \lambda\in\Lambda_{0} \right\}
.\label{en;5.F_alpha}%
\end{equation}

Following (\ref{eqn:2.chi_n_minimax})
\begin{equation}
\chi_{n}^{2}(f_{\alpha},p)=\sup_{g\in\mathcal{F}_{\alpha}}\int gf_{\alpha
}-T(g,P_{n}).\label{eqn:5.chi_n_alpha}%
\end{equation}

\begin{ex}
\label{ex:5.example} Consider the case $f_{\alpha}(x)=\alpha e^{-\alpha x}$
and $r(x)=\gamma\nu^{\gamma} (x)^{-(\gamma+1)}$ for some $\gamma$ fixed,
$x>\nu$.

Then $\mathcal{F}_{\alpha}= \left\{  2\left( \frac{\alpha e^{-\alpha x}%
}{(1-\lambda)\alpha e^{-\alpha x}+\lambda r_{\gamma,\nu}}-1\right)
,\:\lambda\in\Lambda_{0} \text{ such that }\int\frac{\alpha^{2}e^{-2\alpha x}%
}{(1-\lambda)\alpha e^{-\alpha x}+\lambda r_{\gamma,\nu}(x)}dx<\infty\right\}
.$
\end{ex}

Let us now turn back to composite hypothesis.

Let $\Omega$ be defined by
\[
\Omega=\left\{  q(x)=f_{\alpha}(x),\alpha\in\Theta\right\} .
\]

We can write
\[
\mathcal{F}_{\alpha}=\left\{  g(\theta,\lambda,\alpha)=2\!\!\left( \!
\frac{f_{\alpha}}{(1-\lambda)f_{\theta}+\lambda r}-1 \!\right) :
\int|g|f_{\alpha}<\infty, \lambda\in\Lambda_{0},\theta\in\Theta\right\}
\]
and
\[
\chi^{2}(\Omega,P)=\inf_{\alpha\in\Theta}\sup_{g\in\mathcal{F}_{\alpha}}\int
gf_{\alpha}-T(g,P).
\]
The supremum is to be found over a class of functions $\mathcal{F}_{\alpha}$
which changes with $\alpha$.

Denote $\Delta_{\alpha}$ the subset of $(\Theta,\Lambda_{0})$ which
parametrizes $\mathcal{F}_{\alpha}$.

\begin{ex}
[Continued]\label{ex:5.continuation1} We assume $\Theta=[\underline{\alpha
},\overline{\alpha}]$, which corresponds, in our example, to the restriction
of the expected value of $P$ (under $H0$) to the finite interval $[\frac
{1}{\overline{\alpha}},\frac{1}{\underline{\alpha}}]$.

Therefore
\begin{equation}
\label{eqn:5.chi_n_composite_1}\chi^{2}_{n}(\Omega,P)=\inf_{\underline{\alpha
}\leq\alpha\leq\overline{\alpha}} \sup_{(\theta,\lambda)\in\Delta_{\alpha}%
}\int2\left( \frac{\alpha e^{-\alpha x}}{(1-\lambda)\theta e^{-\theta
x}+\lambda r_{\gamma}(x)}-1\right) \alpha e^{-\alpha x}dx-T(g(\theta
,\lambda,\alpha);P_{n}).
\end{equation}

The supremum in (\ref{eqn:5.chi_n_composite_1}) is evaluated over a set which
changes with $\alpha$.

In accordance with the discussion in Section \ref{sec:2.estimator} we may
define
\begin{equation}
\label{eqn:5.chi_n_composite_2}%
\begin{array}
[c]{lcl}%
\mathcal{F} \!\! & \!\!=\!\! & \!\! \left\{  g(\theta,\lambda,\beta)\!=\! 2
\!\! \left( \frac{\beta e^{-\beta x}}{(1-\lambda)\theta e^{\theta x}+\lambda
r(x)}-1 \right) \!: \int\frac{\alpha\beta e^{-(\alpha+\beta)x}}{(1-\lambda
)\theta e^{-\theta x}+\lambda r} dx<\infty, (\alpha,\theta,\beta)\!\in\!
\Theta^{3}, \lambda\!\in\! \Lambda_{0} \! \right\} \\
\!\! & \!\!\subseteq\!\! & \!\! \left\{ g(\theta,\lambda,\beta): \lambda
\in\Lambda_{0},\, (\theta,\beta)\in\Gamma\!\right\} ,
\end{array}
\end{equation}
a class not depending upon $\alpha$.

The resulting test statistic would be then
\begin{equation}
\chi_{n}^{2}(\Omega,P)=\inf_{\alpha\in\Theta}\sup_{(\theta,\beta)\in
\Gamma,\;\lambda\in\Lambda_{0}}\int g(\theta,\lambda,\beta)\alpha e^{-\alpha
x}dx-T(g(\theta,\lambda,\beta);P_{n})\label{eqn:5.chi_n_composite_3}%
\end{equation}
and the supremum in (\ref{eqn:5.chi_n_composite_3}) is determined on a set
that does not depend on $\alpha$.
\end{ex}

The use of (\ref{eqn:5.chi_n_composite_1}) is proposed by M. Broniatowski and
A. Keziou \cite{Broniatowski-Keziou2003}. Also in our context it is easy to
see that (\ref{eqn:5.chi_n_composite_1}) is preferable to
(\ref{eqn:5.chi_n_composite_3}), in the sense that it reduces considerably the
computational complexity of the problem, from a subset of $\left\{
(\theta,\lambda,\beta)\in\Theta\times\Lambda_{0}\times\Theta\right\} $ to a
subset of $\{(\lambda,\theta)\in\Lambda_{0}\times\Theta\}$. \medskip

We first derive the asymptotic distribution of the test statistic $\chi
_{n}^{2}$ under $H1$; in order to use Theorem \ref{th:2.weakconv_H1} we
commute the $inf$ and $sup$ operators in (\ref{eqn:5.chi_n_composite_1})
through the following Lemma \ref{th:5.minimax}.

Assume

\begin{itemize}
\item[(A1)] $\Theta$ is compact.

\item[(A2)] For all $\alpha$ in $\Theta$, $\Delta_{\alpha}$ is compact.
\end{itemize}

Condition (A2) is verified in our example due to the compactness of the
interval $\Theta$ and to the distribution of the outliers.

\begin{lem}
\label{th:5.minimax} Let
\[
\Theta_{_{\!\!(\!\theta_{1}\!,\lambda\!,\theta_{2}\!)}}=\left\{  \alpha
\in\Theta:\,(\theta_{1},\lambda,\theta_{2})\in\Delta_{\alpha}\right\}  .
\]
Under (A1) and (A2),
\begin{align}
& \inf_{\alpha\in\Theta}\sup_{(\theta_{1},\lambda,\theta_{2})\in\Delta
_{\alpha}}\int g(\theta_{1},\lambda,\theta_{2})f_{\alpha}-T(g(\theta
_{1},\lambda,\theta_{2});P)\label{eqn:5.minimax}\\
& \quad=\sup_{(\theta_{1},\theta_{2},\lambda)\in\Theta^{2}\times\Lambda_{0}%
}\inf_{\alpha\in\Theta_{_{\!\!(\!\theta_{1}\!,\!\lambda\!,\!\theta_{2}\!)}}%
}\int g(\theta_{1},\lambda,\theta_{2})f_{\alpha}-T(g(\theta_{1},\lambda
,\theta_{2});P).\nonumber
\end{align}

\end{lem}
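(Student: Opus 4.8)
The plan is to identify the left-hand side of (\ref{eqn:5.minimax}) with $\chi^{2}(\Omega,P)$ and then to prove the two inequalities separately. For fixed $\alpha$ the inner supremum runs over $\mathcal{F}_{\alpha}=\{g(u):u\in\Delta_{\alpha}\}$, $u=(\theta_{1},\lambda,\theta_{2})$, which by (C1) contains $f^{\ast}_{\alpha}=2(f_{\alpha}/p-1)$ (here $p$ is the density of $P$); by Lemma~\ref{th:2.dualitylemma} it therefore equals $\chi^{2}(f_{\alpha},P)$, so the left-hand side is $\inf_{\alpha\in\Theta}\chi^{2}(f_{\alpha},P)=\chi^{2}(\Omega,P)$. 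Write $F(\alpha,u)=\int g(u)f_{\alpha}-T(g(u);P)$ and let $L$, $R$ denote the two members of (\ref{eqn:5.minimax}). A preliminary remark removes the apparent coupling of the feasible sets: since every $g(u)\ge-2$, the integral $\int g(u)f_{\alpha}$ equals $+\infty$ precisely when $u\notin\Delta_{\alpha}$, and one checks that $g(u)\in L^{1}(f_{\alpha})$ reduces to $g(u)\in L^{2}(P)$ uniformly in $\alpha$ (the ratios $f_{\alpha}/p$ being bounded), so $\Delta_{\alpha}$ does not in fact depend on $\alpha$ and the two sides are a genuine product min--max.

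For $R\le L$ I would convexify. Viewing $Q\mapsto\int g\,dQ-T(g;P)$ as affine in the measure $Q$ and strictly concave and upper semicontinuous in $g$ (because of the term $-\tfrac14\int g^{2}\,dP$), I pass to the closed convex hulls $C=\overline{\mathrm{co}}\{f_{\alpha}:\alpha\in\Theta\}$ and $\mathcal{G}=\overline{\mathrm{co}}\,\mathcal{F}$, which are convex and, by (A1)--(A2) and continuity, compact. Enlarging the supremum from $\mathcal{F}$ to $\mathcal{G}$ and using that the infimum of the affine functional $Q\mapsto\int g\,dQ$ over $C$ is the same as over the generating curve, Sion's theorem on $C\times\mathcal{G}$ gives
\begin{align*}
R=\sup_{u}\inf_{\alpha}F(\alpha,u) &\le\sup_{g\in\mathcal G}\inf_{Q\in C}\Bigl(\int g\,dQ-T(g;P)\Bigr)\\
&=\inf_{Q\in C}\sup_{g\in\mathcal G}\Bigl(\int g\,dQ-T(g;P)\Bigr)\\
&\le\inf_{Q\in C}\chi^{2}(Q,P)\le\chi^{2}(\Omega,P)=L,
\end{align*}
the third line because $\mathcal{G}$ is a subfamily of $L^{2}(P)$, so its dual sum is dominated by the full Fenchel dual $\chi^{2}(Q,P)$, and the last because $\{f_{\alpha}\}\subset C$.

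For the reverse inequality I would produce a saddle point from the projection. By (A1) and continuity of $(\alpha,u)\mapsto F$ (dominated convergence) the infimum $\chi^{2}(\Omega,P)$ is attained at some $\alpha^{\ast}$, with optimal dual $g(u^{\ast})=f^{\ast}=2(f_{\alpha^{\ast}}/p-1)$, $u^{\ast}=(\theta^{\mathrm{true}},\lambda^{\mathrm{true}},\alpha^{\ast})$, so that $L=F(\alpha^{\ast},u^{\ast})$. It then suffices that $\alpha^{\ast}$ minimise $F(\cdot,u^{\ast})$, for then $R\ge\inf_{\alpha}F(\alpha,u^{\ast})=F(\alpha^{\ast},u^{\ast})=L$. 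As $T(g(u^{\ast});P)$ is free of $\alpha$, this is the assertion $\int g(u^{\ast})f_{\alpha^{\ast}}\le\int g(u^{\ast})f_{\alpha}$, i.e., with $q^{\ast}=f_{\alpha^{\ast}}/p$, the projection inequality $\int q^{\ast}\,dQ^{\ast}\le\int q^{\ast}\,dQ$ of Theorem~\ref{th:2.characterization}.

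I expect this last inequality to be the main obstacle: $\Omega=\{f_{\alpha}\}$ is a non-convex parametric curve, so the necessity half of Theorem~\ref{th:2.characterization} does not apply verbatim. I would secure it by passing again to $C$: its convexity makes the characterisation applicable, yielding $\int q^{\ast}\,dQ^{\ast}\le\int q^{\ast}\,dQ$ for every $Q\in C\supseteq\{f_{\alpha}\}$, provided the $\chi^{2}$-projection of $P$ onto $C$ lies on the curve $\Omega$. This single coincidence---equivalently $\inf_{Q\in C}\chi^{2}(Q,P)=\chi^{2}(\Omega,P)$---is exactly what also turns the two inequalities of the previous display into equalities, and it is the genuine analytic content that (A1)--(A2) together with the identifiability of the contamination model must deliver.
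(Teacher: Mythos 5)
Your first half is correct, and it is genuinely different from the paper's treatment of the same inequality. The identification of the left member $L$ with $\chi^{2}(\Omega,P)$ agrees with the paper (which obtains it by an explicit computation rather than by quoting (C1) and Lemma~\ref{th:2.dualitylemma}), and your proof of $R\le L$ --- linearity of $Q\mapsto\int g\,dQ$, which makes the infimum over $\overline{\mathrm{co}}\{f_{\alpha}\}$ equal to the infimum over the curve, then Sion's theorem, then domination of the partial supremum by the full dual $\chi^{2}(Q,P)$ --- supplies an argument where the paper has essentially none: this half is simply asserted as display (\ref{eqn:5.minimax_proof_1}), whose printed inequality sign, incidentally, points the wrong way for the paper's proof to close, so your Sion chain is a real contribution. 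Two caveats, though. Sion requires you to name a topology in which $\overline{\mathrm{co}}\{f_{\alpha}\}$ is compact and the functional is semicontinuous. More importantly, your preliminary claim that $\Delta_{\alpha}$ does not depend on $\alpha$ is false in general: in the exponential--Pareto example, at $\lambda=0$ the integrability of $g(\theta_{1},0,\theta_{2})$ against $f_{\alpha}$ amounts to $\alpha+\theta_{2}>\theta_{1}$, which does depend on $\alpha$. The decoupling is harmless only on the infimum side, where infeasible $\alpha$ contribute $+\infty$ and drop out of the infimum; fortunately that is the only place your argument needs it.

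The genuine gap is the reverse inequality $L\le R$, and your own diagnosis is accurate: you have reduced it to the unproved claim that the $\chi^{2}$-projection of $P$ on the convex hull $C$ lies on the non-convex curve $\{f_{\alpha}\}$. Nothing in (A1)--(A2) or identifiability yields this, and it should be expected to fail: already for the exponential family, the projection of a contaminated $p$ on $\overline{\mathrm{co}}\{f_{\alpha}\}$ is in general a strict mixture of exponentials, hence off the curve. The paper proves this direction without any convexity and without Theorem~\ref{th:2.characterization}. It rests on the pointwise identity
\[
\int g(u)f_{\alpha}-T(g(u);P)=\chi^{2}(f_{\alpha},p)-\int\left(\frac{f_{\theta_{2}}}{(1-\lambda)f_{\theta_{1}}+\lambda r}-\frac{f_{\alpha}}{p}\right)^{2}p\,dx,\qquad u=(\theta_{1},\lambda,\theta_{2}),
\]
together with identifiability, which realizes every ratio $f_{\beta}/p$ as some $h_{u}:=f_{\theta_{2}}/\bigl((1-\lambda)f_{\theta_{1}}+\lambda r\bigr)$. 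For fixed $\alpha$ the supremum over $u$ then equals $\chi^{2}(f_{\alpha},p)$, so $L=\inf_{\alpha}\chi^{2}(f_{\alpha},p)$; and on the sup--inf side the inner infimum, evaluated at a suitable $u$, equals $\chi^{2}(f_{\alpha^{\ast}},p)$ for \emph{some} $\alpha^{\ast}\in\Theta$, a value automatically $\ge\inf_{\alpha}\chi^{2}(f_{\alpha},p)=L$. The decisive difference from your plan is that the paper never needs the minimizer $\bar{\alpha}$ of $\chi^{2}(f_{\cdot},p)$ to minimize $F(\cdot,u^{\ast})$ --- that requirement is exactly the convex-projection variational inequality $\int(f_{\bar{\alpha}}/p-1)(f_{\alpha}-f_{\bar{\alpha}})\,dx\ge0$, which is unavailable for a non-convex $\Omega$ --- it only needs the sup--inf value to land at some point of the curve, not at $\bar{\alpha}$ itself. (Strictly speaking, even this step hides a fixed-point argument, namely the existence of $\beta$ with $\arg\min_{\alpha}\int f_{\alpha}f_{\beta}/p\,dx=\beta$, which the paper leaves implicit; but it is a far weaker demand than yours.) To repair your proof, replace the saddle-point requirement at $\bar{\alpha}$ by this weaker requirement.
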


\begin{proof}
For $\Theta_{_{\!\!(\!\theta_{1}\!,\!\lambda\!,\!\theta_{2}\!)}}$ defined as
above we have
\begin{align}
& \inf_{\alpha\in\Theta}\sup_{(\theta_{1} ,\lambda,\theta_{2} )\in
\Delta_{\alpha} }\int g(\theta_{1} ,\lambda,\theta_{2} )f_{\alpha}%
-T(g(\theta_{1},\lambda,\theta_{2} );P)\label{eqn:5.minimax_proof_1}\\
& \quad\leq\sup_{(\theta_{1},\theta_{2},\lambda)\in\Theta^{2}\times\Lambda
_{0}}\inf_{\alpha\in\Theta_{_{\!\!(\!\theta_{1}\!,\!\lambda\!,\!\theta_{2}%
\!)}}}\int g(\theta_{1} ,\lambda,\theta_{2} )f_{\alpha}-T(g(\theta_{1}
,\lambda,\theta_{2} );P).\nonumber
\end{align}

On the other hand,
\begin{align*}
\sup_{\theta_{1}\!,\lambda\!,\theta_{2} }\!\int\!gf_{\alpha}\!-\!T(g;P) \!\!\!
& \!\!=\!\!\!\!\sup_{\theta_{1} \!,\lambda\!,\theta_{2} }\left\{
\int\!2\!\frac{f_{\theta_{2} }}{(1-\lambda)f_{\theta_{1} }+\lambda r}%
\frac{f_{\alpha}}{p}pdx\!-\!\int\!\!\left(  \!\frac{f_{\theta_{2} }%
}{(1-\lambda)f_{\theta_{1} }\!+\!\lambda r}\!\right)  ^{\!2}\!pdx\!+\!1
\right\} \\
& \!\!\!=\!\!\!\!\sup_{\theta_{1} ,\lambda,\theta_{2} }-\int\left(
\frac{f_{\theta_{2} }}{(1-\lambda)f_{\theta_{1} }+\lambda r}-\frac{f_{\alpha}%
}{p}\right) ^{2}pdx+\int\left(  \frac{f_{\alpha}}{p}-1\right) ^{2}pdx\\
& \!\!\!\leq\!\! \!\!\int\left(  \frac{f_{\alpha}}{p}-1\right) ^{2}%
pdx=\chi^{2}(f_{\alpha},p)
\end{align*}
and equality holds if $(\theta_{1},\theta_{2},\lambda)$ are such that
$\frac{f_{\alpha}}{p}=\frac{f_{\theta_{2} }}{(1-\lambda)f_{\theta_{1}
}+\lambda r}$ (identifiability allows to find such $(\theta_{1},\theta
_{2},\lambda)$ for every $\alpha\in\Theta$, and for every contaminated measure
$p$).

Also we have
\begin{align*}
& \sup_{(\theta_{1} ,\lambda,\theta_{2} )}\inf_{\alpha}\int gf_{\alpha
}-T(g;P)\\
&  \qquad=\sup_{(\theta_{1} ,\lambda,\theta_{2} )}\left\{  -\int\left(
\frac{f_{\theta_{2} }}{ (1-\lambda)f_{\theta_{1} }+\lambda r}-\frac
{f_{\alpha^{\ast}}}{p}\right)  ^{2}pdx+\int\left(  \frac{f_{\alpha^{\ast}}}%
{p}-1\right)  ^{2}pdx\right\} \\
& \qquad=\chi^{2}(f_{\alpha^{*}},p),
\end{align*}
for some $\alpha^{*}$ in $\Theta_{_{\!\!(\!\theta_{1}\!,\!\lambda
\!,\!\theta_{2}\!)}}$.

We thus get
\[
\sup_{(\theta_{1},\lambda,\theta_{2}) }\inf_{\alpha}\int g f_{\alpha
}-T(g;P)=\chi^{2}(f_{\alpha^{*}},p)\geq\chi^{2}(\Omega,p)=\inf_{\alpha}%
\sup_{(\theta_{1},\lambda,\theta_{2}) }\int g f_{\alpha}-T(g;P),
\]
which, by (\ref{eqn:5.minimax_proof_1}), concludes the proof.
\end{proof}

Theorem \ref{th:2.weakconv_H1} implies consistency of $\chi_{n}^{2}$ as an
estimator of $\chi^{2}$ and convergence in distribution of $\sqrt{n}\left(
\chi_{n}^{2}-\chi^{2}\right)  $ to a normally distributed r.v. with mean zero
and variance given by $P\left(  \left(  -g^{\ast}-\frac{1}{4}{g^{\ast}}%
^{2}\right)  ^{2}\right)  -\left(  P\left(  -g^{\ast}-\frac{1}{4}{g^{\ast}%
}^{2}\right)  \right)  ^{2}$, under $H1$.

The asymptotic distribution under the null hypothesis can be found subject to
the choice of the parametric class $\left\{  f_{\alpha}\right\}  $ and of the
density $r$, as can be deduced by Theorem 3.5 in
\cite{Broniatowski-Keziou2003}. Following their Theorem 3.5, which holds for
composite hypothesis testing in a parametric environment, the test statistic
$n\chi_{n}^{2}$ converges weakly, under $H0$, to a chi-squared distribution
with degrees of freedom depending on the dimension of the parameter space
$\Theta$ and on the cardinality of the constraints induced by $P\in\Omega$.

In the following, we focus on definition (\ref{eqn:5.chi_n_composite_1}) for
$\chi_{n}^{2}(\Omega,P)$.

The null hypothesis reduces the space $\Theta\times\Lambda_{0}$ to
$\Theta\times\left\{  0\right\} $.

Theorem 3.5 in \cite{Broniatowski-Keziou2003} implies that the degree of
freedom $d$ of the limiting chi-squared distribution equals the number of
parameters of $P$ under $H0$. In the following we assume $d=1$, as in Example
\ref{ex:5.example}.

Let $h(\theta,\lambda;x)=(1-\lambda)f_{\theta}(x)+\lambda r(x)$.

Checking conditions (C.12)-(C.15) in \cite{Broniatowski-Keziou2003} yields:

\begin{thm}
Under $H0$, with $P=P_{\theta_{0}}$, assume that

\begin{itemize}
\item[(i)] The class of contaminated densities $\left\{ h(\theta
,\lambda),\theta\in\Theta, \lambda\in\Lambda_{0}\right\} $ is $P_{\theta_{0}%
}-$identifiable;

\item[(ii)] The class of functions $\left\{ \frac{h(\alpha,\nu)}%
{h(\theta,\lambda)}, \theta\in\Theta_{\alpha}, \lambda\in\Lambda_{0},
\alpha\in\Theta, |\nu| <\varepsilon\right\} $ is $P_{\theta_{0}}-$GC for some
$\varepsilon$ small enough;

\item[(iii)] The densities $f_{\theta}$ are differentiable up to the second
order in some neighborhood $V(\theta_{0})$ of $\theta_{0}$ and $F_{\theta
}(x)=\int_{-\infty}^{x} f_{\theta}(u)du$ is differentiable with respect to
$\theta$;

\item[(iv)] There exists a neighborhood $V$ of $(\theta_{0},0,\theta_{0},0)$
such that, for every $(\theta,\lambda,\alpha,\nu) \in V$ we have
\[%
\begin{array}
[c]{lll}%
\frac{f_{\alpha}}{h(\theta,\lambda)}\leq H_{1}(x), & \quad & \frac{\ddot
{f}_{\alpha}}{h(\theta,\lambda)}\leq H_{3}(x),\\
\frac{\dot{f}_{\alpha}}{h(\theta,\lambda)}\leq H_{2}(x), &  & \frac
{r}{h(\theta,\lambda)}\leq H_{4}(x),
\end{array}
\]
\noindent where each of the functions $H_{j}$ ($j=1,2,3,4$) is square
integrable w.r. to the density $h(\alpha,\nu)$ and is in $L_{4}(P_{\theta_{0}%
})$.
\end{itemize}

Then, $n\chi_{n}^{2}$ converges to a chi-squared distributed r.v. with degree
of freedom equal to 1.
\end{thm}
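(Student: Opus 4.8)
The plan is to identify $n\chi_n^2$ with a minimum $\chi^2$-divergence statistic for the composite null $H0:P\in\Omega=\{f_\alpha\}$ and to obtain its limiting law by checking that (i)--(iv) are the instances of the abstract hypotheses (C.12)--(C.15) required by Theorem~3.5 in \cite{Broniatowski-Keziou2003}. I would first record the saddle-point form of the estimator: by (\ref{eqn:5.chi_n_composite_1}),
\[
\chi_n^2(\Omega,P)=\inf_{\alpha\in\Theta}\ \sup_{(\theta,\lambda)\in\Delta_\alpha}\ \int g(\theta,\lambda,\alpha)f_\alpha\,dx-T\bigl(g(\theta,\lambda,\alpha);P_n\bigr),
\]
a function of the parameter $\psi=(\alpha,\theta,\lambda)$ in which the data enter only through the functionals $P_n g$ and $P_n g^2$ inside $T(\cdot;P_n)$. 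Under $H0$, with $P=P_{\theta_0}$, the corresponding population objective is minimised over $\alpha$ at $\alpha=\theta_0$ and its inner supremum is attained at $(\theta,\lambda)=(\theta_0,0)$, where $g(\theta_0,0,\theta_0)\equiv0$ and $\chi^2(\Omega,P_{\theta_0})=0$; the population saddle point is thus $\psi_0=(\theta_0,\theta_0,0)$.

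A structural point that makes the standard theory applicable is that $\lambda=0$ lies in the \emph{interior} of the open interval $\Lambda_0$, which is exactly why $p$ was embedded in the class of densities of signed measures of total mass one: $\psi_0$ is an interior point of the parameter set and no boundary correction arises. The argument then proceeds as for an $M$-estimator defined through a dual divergence criterion. The empirical saddle point $\hat\psi_n$ solves the estimating equations obtained by differentiating the objective in $\psi$, and a second-order Taylor expansion at $\psi_0$, combined with $\sqrt n(\hat\psi_n-\psi_0)=O_P(1)$, yields
\[
n\,\chi_n^2(\Omega,P)=W_n'\,\Sigma^{-1}W_n+o_P(1),
\]
where $W_n=\sqrt n\,(P_n-P)\dot g_0$ is the score empirical process at $\psi_0$ and $\Sigma$ is the associated information-type matrix. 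Since $W_n\Rightarrow N(0,\Sigma)$ by the central limit theorem, the quadratic form converges to a chi-square law whose number of degrees of freedom is the rank of the effective information in the saddle-point problem; by Theorem~3.5 in \cite{Broniatowski-Keziou2003} and the discussion preceding the statement this equals the number of parameters of $P$ under $H0$, that is $\dim\Theta=1$.

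It remains to verify the correspondence between (i)--(iv) and (C.12)--(C.15). Assumption (i), $P_{\theta_0}$-identifiability of the contaminated family, guarantees that $\psi_0$ is the \emph{unique} population saddle point, so that the estimator targets the correct value. Assumption (ii), the Glivenko--Cantelli property of the ratio class $\{h(\alpha,\nu)/h(\theta,\lambda)\}$ on a neighbourhood of $\psi_0$, supplies the uniform law of large numbers needed for the strong consistency $\hat\psi_n\to\psi_0$. Assumption (iii) provides the second-order differentiability of $f_\theta$ and of $F_\theta$ that legitimates the Taylor expansion above and the existence of the scores $\dot g_0$ and Hessians $\ddot g_0$. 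Assumption (iv) is the domination step: square-integrability of the envelopes $H_1,\dots,H_4$ with respect to $h(\alpha,\nu)$ justifies differentiation under the integral sign and the finiteness of $\Sigma$, while their membership in $L_4(P_{\theta_0})$ controls the first and second derivatives of $g$ strongly enough to make the relevant score class $P_{\theta_0}$-Donsker and the Taylor remainder $o_P(1)$.

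The step I expect to be the main obstacle is precisely this last translation: turning the pointwise bounds of (iv) into the functional Donsker property of the score class and into the finiteness and non-degeneracy of $\Sigma$, \emph{uniformly} over a neighbourhood of $\psi_0$ in which the class $\mathcal{F}_\alpha$ itself varies with $\alpha$. Concretely, one must dominate the $\psi$-derivatives of the ratio $f_\alpha/h(\theta,\lambda)$ and of its square near $\psi_0$, where numerator and denominator move simultaneously, and check that the $L_2(h(\alpha,\nu))$ and $L_4(P_{\theta_0})$ controls are strong enough both to bound the second-order remainder and to furnish an $L_2(P_{\theta_0})$-integrable, Donsker envelope for the scores. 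Once this is established, hypotheses (C.12)--(C.15) of \cite{Broniatowski-Keziou2003} hold, Theorem~3.5 there applies with $d=1$, and $n\chi_n^2\overset{d}{\longrightarrow}\chi^2(1)$ as claimed.
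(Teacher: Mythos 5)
Your proposal is correct and follows essentially the same route as the paper: the paper offers no separate proof at all, simply asserting that conditions (i)--(iv) are the verification of (C.12)--(C.15) of \cite{Broniatowski-Keziou2003}, so that their Theorem~3.5 applies with the degrees of freedom equal to the number of parameters under $H0$, i.e.\ $d=1$. Your additional sketch of the internal mechanics of that theorem (interior saddle point at $(\theta_0,\theta_0,0)$ thanks to the signed-measure embedding, Taylor expansion, quadratic form in the score process) and your identification of the condition-by-condition correspondence go beyond what the paper writes, but they are consistent with it.
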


\section*{Acknowledgements}

This work was supported by \emph{Progetto Ateneo di Padova} coordinated by
Prof. G. Celant.

\bibliographystyle{plain}
\bibliography{bibki2}
%{,Articles,param,Bibliographie-Livres}
%\nocite{*}

\end{document}